\title{On some triangulated categories over group algebras}
\author{Ioannis Emmanouil and Wei Ren $^*$}
\newtheorem{Lemma}{Lemma}[section]
\newtheorem{Proposition}[Lemma]{Proposition}
\newtheorem{Theorem}[Lemma]{Theorem}
\newtheorem{Corollary}[Lemma]{Corollary}
\newtheorem{Example}[Lemma]{Example}
\newtheorem{Remark}[Lemma]{Remark}
\begin{document}

\begin{abstract}
In this paper, we introduce the cofibrant derived category of a
group algebra $kG$ and study its relation to the derived category
of $kG$. We also define the cofibrant singularity category of $kG$,
whose triviality characterizes the regularity of $kG$ with respect
to the cofibrant dimension, and examine its significance as a measure
of the obstruction to the equality between the classes of Gorenstein
projective and cofibrant modules. We show that the same obstruction
can be measured by certain localization sequences between stable
categories.
\end{abstract}

\makeatletter
\@namedef{subjclassname@2020}{\textup{2020} Mathematics Subject Classification}
\makeatother

\subjclass[2020]{18G80, 18G25, 20J05, 20C07}
\date{}
\keywords{cofibrant derived category, cofibrant singularity category, stable categories}
\thanks{$^*$ Corresponding author; e-mail: wren$\symbol{64}$cqnu.edu.cn}
\maketitle
\tableofcontents

\section{Introduction}

\noindent
Gorenstein homological algebra is the relative homological theory,
which is based upon the classes of Gorenstein projective, Gorenstein
injective and Gorenstein flat modules \cite{EJ, Hol}.
It has developed rapidly during the past several years and has
found interesting applications in the representation theory of
Artin algebras, the theory of singularities and in cohomological
group theory. However, in contrast to classical homological algebra
properties, it is not known whether (i) the class of Gorenstein
projective modules is contravariantly finite (precovering) in the
full module category, (ii) Gorenstein projective modules are
Gorenstein flat and (iii) Gorenstein flat modules are those
modules whose character modules (Pontryagin duals) are Gorenstein
injective. These questions represent fundamental problems in
Gorenstein homological algebra, and pose obstacles in the
development of several aspects of the theory. Alternatively,
in the special case where the ambient ring is a group algebra,
one may consider the cofibrant modules, fibrant modules and
cofibrant-flat modules as substitutes. The notion of cofibrant
modules was introduced by Benson in \cite{Ben} to study modules
of type $FP_\infty$ over groups in the class
${\scriptstyle{{\bf LH}}}\mathfrak{F}$ of hierarchically
decomposable groups introduced by Kropholler \cite{Kro}. The
cofibrant modules are proved to be Gorenstein projective by
an elegant construction of Cornick and Kropholler \cite{CK}
and form a contravariantly finite (precovering) class \cite{ER1}.
Fibrant modules and cofibrant-flat modules are introduced in
\cite{ER2}; these are particular Gorenstein injective and
Gorenstein flat modules respectively. Every cofibrant module
is cofibrant-flat and cofibrant-flat modules are precisely
those modules whose character modules are fibrant. In this
paper, we intend to compare these modules to the Gorenstein
modules from the perspective of the relative singularity and
the stable categories.

We first introduce the cofibrant derived category
${\bf D}_{\tt Cof}(kG)$ of a group algebra $kG$. Using standard
techniques, we can describe the bounded cofibrant derived category
${\bf D}^b_{\tt Cof}(kG)$ as a suitable subcategory of the homotopy
category ${\bf K}(kG)$; cf.\ Theorem \ref{thm:bCDcat}. This description
is reminiscent of \cite[Theorem 3.6(ii)]{GZ}, a result that was only
proved though for finite dimensional algebras. Inspired by
\cite[Theorem 5.1]{KZ}, we also make a comparison with the bounded
derived category ${\bf D}^b(kG)$ and express ${\bf D}^b(kG)$ as a
Verdier quotient of ${\bf D}^b_{\tt Cof}(kG)$ in Theorem
\ref{thm:Dcat-CDcat}.

The singularity category ${\bf D}^b_{sg}(R)$ of a ring $R$ was
introduced by Buchweitz in his unpublished note \cite{Buc}, under
the name of ``stable derived category''. Orlov \cite{Or} called
${\bf D}^b_{sg}(R)$ the singularity category of $R$, since that
category reflects certain homological singularity properties of
$R$. The basic properties of the singularity category
${\bf D}^b_{sg}(R)$ are carefully stated in \cite{Ch, LH}. There
is always a fully faithful functor $F$ from the stable category of
Gorenstein projective modules to ${\bf D}^b_{sg}(R)$. The functor
$F$ is an equivalence if the ring $R$ has finite Gorenstein global
dimension; this result is referred to as Buchweitz's theorem
\cite[4.4.1]{Buc}. A particular case of this result was obtained
by Rickard in \cite[Theorem 2.1]{Ric}, where it was shown that the
singularity category of a self-injective algebra is triangle-equivalent
to its stable module category. The Gorenstein defect category
\cite[Definition 4.1]{BJO} is defined as the Verdier quotient
${\bf D}^b_{sg}(R)/{\rm Im}F$. It follows from
\cite[Theorems 3.6 and 4.2]{BJO} that the converse of Buchweitz's
theorem is also true, so that the functor $F$ is an equivalence
if and only if $R$ has finite Goresntein global dimension; see
also \cite[\S 8.5]{Z}. A description of the Gorenstein defect
category is given in \cite[Theorem 6.7]{KZ}. The Gorenstein
singularity category was introduced in \cite{BDZ}. It is
equivalent to the Gorenstein defect category when the class
of Gorenstein projective modules is contravariantly finite;
cf.\ \cite[Theorem 4.3]{BDZ} or \cite[Theorem 6.7(ii)]{KZ}.

In section 4, we introduce the cofibrant singularity category
${\bf D}^b_{{\tt Cof}.sg}(kG)$; its triviality is equivalent
to the regularity of $kG$ with respect to the cofibrant dimension.
We show that $kG$ is Gorenstein if and only if the cofibrant
singularity category is triangle-equivalent to the quotient
$\underline{{\tt GProj}}(kG) / \underline{{\tt Cof}}(kG)$ of
the stable categories of Gorenstein projective and cofibrant
$kG$-modules; see Theorem \ref{thm:Cof-sg2}(ii). The cofibrant
singularity category measures the obstruction to the equivalence
between the stable categories of Gorenstein projective and cofibrant
modules (equivalently, to the equality between the categories of
Gorenstein projective and cofibrant modules), in the case where
$kG$ is Gorenstein; see Theorem \ref{thm:Cof-sg=0}. If $G$ is a
group of type $FP_\infty$ contained in the class
${\scriptstyle{{\bf LH}}}\mathfrak{F}$ introduced by Kropholler
\cite{Kro}, then the cofibrant singularity category of the
integral group algebra $\mathbb{Z}G$ is trivial. However, if $G$
is any finite group and $k = \mathbb{Z}/(p^2)$, where $p$ is a
prime number, then the cofibrant singularity category of $kG$ is
non-trivial, since there are Gorenstein projective $kG$-modules
which are not cofibrant; see Example \ref{eg:2}. We prove a version
of the converse of Buchweitz's theorem regarding cofibrant modules
in Proposition \ref{prop:dense}: If the canonical functor $F$ from
the stable category of cofibrant modules to the singularity category
is an equivalence, then all $kG$-modules have finite cofibrant dimension.
This is analogous to the corresponding result for Gorenstein projective
modules. However, we give a direct and concise argument which can be
also used to prove the converse of Buchweitz's theorem regarding
Gorenstein projective modules.

In section 5 we obtain certain localization sequences that involve
stable categories and use them to compare the class of projectively
coresolved Gorenstein flat modules \cite{SS} to the class of cofibrant
modules (resp.\ the class of Gorenstein flat modules to the class of
cofibrant-flat modules, resp.\ the class of Gorenstein injective
modules to the class of fibrant modules). We note that the stable
category of cofibrant modules is equivalent for many groups $G$ to
the stable module category ${\tt StMod}(kG)$, which is itself defined
in \cite[Sections 8-10]{Ben} and has objects the modules of finite
cofibrant dimension; see \cite[Corollary 6.9]{R}. Finally, in Proposition
\ref{prop:perf-ring}, we obtain a characterization of perfect group
algebras in terms of the behaviour of cofibrant and Gorenstein modules,
in a way that is analogous to Bass' characterization of perfect rings
in terms of flat modules \cite{Bass}.

\vspace{0.1in}

\noindent
{\em Notations and terminology.}
We consider a commutative ring $k$ and a group $G$. Unless
otherwise specified, all modules are $kG$-modules.

\section{Modules over group algebras}

\noindent
Let $M,N$ be two $kG$-modules. Using the diagonal action of $G$,
the tensor product $M \otimes_k N$ is also a $kG$-module;
we let $g \cdot (x \otimes y) = gx \otimes gy \in M \otimes_k N$
for all $g \in G$, $x \in M$, $y \in N$. The $k$-module
${\rm Hom}_k(M, N)$ admits the structure of a $kG$-module as well
with the group $G$ acting diagonally; here
$(g\cdot f)(x)= gf(g^{-1}x)\in N$ for any $g\in G$,
$f\in {\rm Hom}_k(M, N)$ and $x\in M$.

Let $B(G,\mathbb{Z})$ be the $\mathbb{Z}G$-module consisting
of all bounded functions from $G$ to $\mathbb{Z}$, introduced
in \cite{KT}. The $kG$-module
$B(G,k) = B(G,\mathbb{Z}) \otimes_{\mathbb{Z}}k$ is identified
with the module of all functions from $G$ to $k$ that admit
finitely many values. It is free as a $kH$-module for any
finite subgroup $H \subseteq G$. We also note that there
is a $k$-split $kG$-linear monomorphism
$\iota: k \rightarrow B(G,k)$. For simplicity of notation,
we shall denote $B(G,k)$ by $B$. Following Benson \cite{Ben}, we
call a $kG$-module $M$ cofibrant if the (diagonal) $kG$-module
$M \otimes_k B$ is projective; all projective $kG$-modules are
cofibrant. Cofibrant modules are closely related to Gorenstein
projective modules; we refer to \cite{EJ, Hol} for the notion
of Gorenstein projective module and to \cite[$\S 4$]{SS} for
the particular subclass of projectively coresolved Gorenstein
flat modules. Let ${\tt GProj}(kG)$ and ${\tt PGF}(kG)$ denote
the classes of Gorenstein projective and projectively coresolved
Gorenstein flat $kG$-modules, respectively. Using an elegant
construction, it was shown in \cite[Theorem 3.5]{CK} that any
cofibrant module is Gorenstein projective. Analogously, it was
proved in \cite[Proposition 8.2]{S} that any cofibrant module
is projectively coresolved Gorenstein flat. Hence,
${\tt Cof}(kG) \subseteq {\tt PGF}(kG) \subseteq {\tt GProj}(kG)$.

Recall that a group $G$ is of type $\Phi$ over $k$ if for any
$kG$-module $M$, the projective dimension of $M$ is finite if
and only if for any finite subgroup $H\subseteq G$, the
restricted $kH$-module $\mbox{res}_H^GM$ has finite projective
dimension. The concept of groups of type $\Phi$ over $\mathbb{Z}$
was introduced by Talelli \cite{T}, in order to study groups
which admit a finite dimensional model for the classifying
space of proper actions. We also consider the class of
hierarchically decomposable groups defined by Kropholler
\cite{Kro}. The class ${\scriptstyle{\bf H}}\mathfrak{F}$
is the smallest class of groups, which contains the class
$\mathfrak{F}$ of finite groups and is such that whenever
a group $G$ admits a finite dimensional contractible
$G$-CW-complex with stabilizers in
${\scriptstyle{\bf H}}\mathfrak{F}$, then we also have
$G \in {\scriptstyle{\bf H}}\mathfrak{F}$. The class
${\scriptstyle{\bf LH}}\mathfrak{F}$ consists of those
groups, all of whose finitely generated subgroups are in
${\scriptstyle{\bf H}}\mathfrak{F}$. This class contains
all soluble-by-finite groups, all groups of finite
cohomological dimension over $\mathbb{Z}$ and all groups
admitting a faithful representation as endomorphisms of a
Noetherian module over a commutative ring.

If $k$ has finite global dimension and $G$ is a either an
${\scriptstyle{{\bf LH}}}\mathfrak{F}$-group or a group of
type $\Phi$ over $k$, then every Gorenstein projective
$kG$-module is cofibrant and hence
${\tt Cof}(kG)={\tt PGF}(kG)={\tt GProj}(kG)$; cf. \cite{DT, Bi}.
The assumption on $k$ is relaxed in \cite[Corollary 2.5]{ER1}, where
it is shown that the above equality also holds if (i) $k$ has finite
weak global dimension or (ii) all pure-projective $k$-modules have
finite projective dimension. It was conjectured in
\cite[Conjecture 1.1]{Bi} that for any group $G$ and any commutative
ring $k$ of finite global dimension, the class of Gorenstein projective
$kG$-modules coincides with the class of cofibrant $kG$-modules, i.e.\
that ${\tt Cof}(kG)= {\tt GProj}(kG)$. This conjecture was initially
proposed by Dembegioti and Talelli over $\mathbb{Z}$ in \cite{DT} and
is still open.

We recall that the group algebra $kG$ is {\em Gorenstein}
if every $kG$-module has finite Gorenstein projective dimension.

\begin{Lemma}\label{lem:bound-cd}
The following conditions are equivalent:

(i) Every $kG$-module has finite cofibrant dimension.

(ii) The group algebra $kG$ is Gorenstein and ${\tt Cof}(kG) = {\tt GProj}(kG)$.
\end{Lemma}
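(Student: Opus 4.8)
The plan is to prove the two implications separately; (ii)$\Rightarrow$(i) is routine and (i)$\Rightarrow$(ii) carries the content. For (ii)$\Rightarrow$(i): if $kG$ is Gorenstein then every module $M$ admits an exact sequence $0\to G_n\to\cdots\to G_0\to M\to 0$ with the $G_i$ Gorenstein projective, and under ${\tt Cof}(kG)={\tt GProj}(kG)$ this is a cofibrant resolution, whence $M$ has finite cofibrant dimension. For the converse I would first note that $kG$ is Gorenstein: a finite cofibrant resolution of a module is in particular a finite resolution by Gorenstein projectives, since cofibrant modules are Gorenstein projective by \cite[Theorem 3.5]{CK}; and since Gorenstein projective dimension commutes with arbitrary direct sums, $kG$ in fact has finite Gorenstein global dimension. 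It then remains to prove ${\tt GProj}(kG)\subseteq{\tt Cof}(kG)$.

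For this I would use the complete hereditary cotorsion pair $({\tt Cof}(kG),{\tt Cof}(kG)^{\perp})$ of \cite[Theorem 3.3]{ER1} and establish first the identity ${\tt Cof}(kG)\cap{\tt Cof}(kG)^{\perp}={\tt Proj}(kG)$. The inclusion $\supseteq$ is clear. For $\subseteq$, given $X$ in the intersection, applying $X\otimes_k-$ to the $k$-split exact sequence $0\to k\to B\to\overline{B}\to 0$ yields a short exact sequence $0\to X\to X\otimes_k B\to X\otimes_k\overline{B}\to 0$ of diagonal $kG$-modules. Since $\overline{B}$ is $k$-projective (a $k$-direct summand of the $k$-free module $B$) and $X$ is cofibrant, $(X\otimes_k\overline{B})\otimes_k B\cong(X\otimes_k B)\otimes_k\overline{B}$ is projective by \cite[Chapter III, Corollary 5.7]{Bro}, so $X\otimes_k\overline{B}$ is cofibrant; as $X\in{\tt Cof}(kG)^{\perp}$, the sequence splits and $X$ is a direct summand of the projective module $X\otimes_k B$, hence projective.

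The second, more delicate, point is that ${\tt Cof}(kG)$ is closed under cosyzygies: if $C$ is cofibrant and $0\to C\to Q\to C'\to 0$ is exact with $Q$ projective and $C'$ Gorenstein projective, then $C'$ is cofibrant. Applying the exact functor $-\otimes_k B$ shows that $C'\otimes_k B$ has projective dimension at most one; on the other hand $C'\otimes_k B$ is Gorenstein projective, because tensoring a complete resolution of $C'$ with the $k$-flat, $k$-projective module $B$ gives an exact complex of projective $kG$-modules all of whose cycles have finite (and, as $kG$ has finite Gorenstein global dimension, uniformly bounded) Gorenstein projective dimension, hence satisfy ${\rm Ext}^{>0}_{kG}(-,{\tt Proj}(kG))=0$, so the complex is totally acyclic. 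A Gorenstein projective module of finite projective dimension is projective, so $C'\otimes_k B$ is projective and $C'$ is cofibrant. Iterating, every cofibrant module admits, up to projective direct summands, a presentation $C\cong\Omega^d C'$ with $C'$ cofibrant, for every $d\ge 0$.

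To finish, let $G$ be Gorenstein projective and $W\in{\tt Cof}(kG)^{\perp}$; since ${\tt Cof}(kG)={}^{\perp}({\tt Cof}(kG)^{\perp})$, it is enough to show ${\rm Ext}^1_{kG}(G,W)=0$. By hypothesis (i), $W$ has finite cofibrant dimension, say $d$, so the standard dimension-shifting for a complete hereditary cotorsion pair gives that $\Omega^d W$ is cofibrant. I claim $\Omega^d W$ lies also in ${\tt Cof}(kG)^{\perp}$: for any cofibrant $C$, choosing a cofibrant $C'$ with $C\cong\Omega^d C'$ (modulo projectives) as above, one has ${\rm Ext}^1_{kG}(C,\Omega^d W)\cong{\rm Ext}^{d+1}_{kG}(C',\Omega^d W)\cong{\rm Ext}^1_{kG}(C',W)$ — the second isomorphism being legitimate because $C'$ is Gorenstein projective — and the last group vanishes since $C'\in{\tt Cof}(kG)$ and $W\in{\tt Cof}(kG)^{\perp}$. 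Hence $\Omega^d W\in{\tt Cof}(kG)\cap{\tt Cof}(kG)^{\perp}={\tt Proj}(kG)$, so $W$ has projective dimension at most $d$ and therefore ${\rm Ext}^{>0}_{kG}(G,W)=0$. The main obstacle is precisely this last claim that the cofibrant module $\Omega^d W$ is moreover in ${\tt Cof}(kG)^{\perp}$: it is the only non-formal step, resting on the cosyzygy-closure of ${\tt Cof}(kG)$ — which is where the Gorenstein hypothesis on $kG$ genuinely enters — together with the identification of ${\tt Cof}(kG)\cap{\tt Cof}(kG)^{\perp}$ with ${\tt Proj}(kG)$.
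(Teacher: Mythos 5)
Your overall route is sound, but it is genuinely different from --- and far longer than --- the paper's. The paper's proof of (i)$\rightarrow$(ii) is two lines: finiteness of the cofibrant dimension forces finiteness of the Gorenstein projective dimension (since cofibrant modules are Gorenstein projective), so $kG$ is Gorenstein; and the inclusion ${\tt GProj}(kG)\subseteq{\tt Cof}(kG)$ is then obtained by citing \cite[Proposition 2.2(i)]{ER1}, which says precisely that a Gorenstein projective $kG$-module of finite cofibrant dimension is cofibrant. What you do instead is reprove that cited result from scratch, via the cotorsion pair $({\tt Cof}(kG),{\tt Cof}(kG)^{\perp})$: you re-establish that its kernel is ${\tt Proj}(kG)$ (this is already recorded in the paper, in the proof of Proposition 4.6, again with a citation to \cite[Theorem 3.3]{ER1} --- though your direct argument with $0\to X\to X\otimes_k B\to X\otimes_k\overline{B}\to 0$ is correct), prove closure of ${\tt Cof}(kG)$ under cosyzygies within ${\tt GProj}(kG)$, and then run a dimension shift to show $\Omega^dW$ is projective for $W\in{\tt Cof}(kG)^{\perp}$. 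The isomorphisms ${\rm Ext}^1(\Omega^dC',\Omega^dW)\cong{\rm Ext}^{d+1}(C',\Omega^dW)\cong{\rm Ext}^1(C',W)$ are legitimate as you say. What your approach buys is self-containedness; what it costs is length and one delicate point, noted below.

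The one step you should tighten is the justification that $C'\otimes_kB$ is Gorenstein projective. You write that the cycles of the tensored complete resolution have finite Gorenstein projective dimension and ``hence satisfy ${\rm Ext}^{>0}_{kG}(-,{\tt Proj}(kG))=0$''. That implication is false in isolation: a module of finite Gorenstein projective dimension (even of finite projective dimension, e.g.\ $\mathbb{Z}/2$ over $\mathbb{Z}$) need not have vanishing ${\rm Ext}^{>0}$ into projectives. The correct argument is that each cycle of an acyclic complex of projectives is a $d$-th syzygy for every $d\ge0$; if the Gorenstein global dimension of $kG$ is at most $n$, then being an $n$-th syzygy of a module of Gorenstein projective dimension at most $n$ forces the cycle to be Gorenstein projective, and only then does the ${\rm Ext}$-vanishing (hence total acyclicity) follow. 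This is the standard fact that over a ring of finite Gorenstein global dimension every acyclic complex of projectives is totally acyclic; with that repair, your proof goes through.
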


\begin{proof}
The implication (ii)$\rightarrow$(i) is obvious. Since any cofibrant
module is Gorenstein projective, assertion (i) implies that the
Gorenstein projective dimension of any $kG$-module $M$ is finite;
then, the group algebra $kG$ is Gorenstein. Since the cofibrant
dimension of any (Gorenstein projective) $kG$-module is finite,
\cite[Proposition 2.2(i)]{ER1} implies that that any Gorenstein
projective $kG$-module is cofibrant, i.e.\ that
${\tt Cof}(kG) = {\tt GProj}(kG)$.
\end{proof}

\noindent
Over a general group algebra, there may be Gorenstein projective
(even projectively coresolved Gorenstein flat) modules which are
not cofibrant.

\begin{Example}\label{eg:1}
Let $G$ be a finite group and $p$ a prime number. We consider
the local ring $k = \mathbb{Z}/(p^2)$ and its maximal ideal
$\mathfrak{m} = (p)/(p^2)$. Note that the ring $k$ is self-injective
and the group algebra $kG$ is Gorenstein.
The $k$-module $M'= \mathbb{Z}/(p) \cong k/\mathfrak{m}$ is
Gorenstein projective and has infinite projective dimension.
It follows that the induced $kG$-module $M = kG \otimes_k M'$
is Gorenstein projective; see, for example, \cite[Lemma 2.6(i)]{ET1}.
In fact, $M$ is also projectively coresolved Gorenstein flat.
However, $M$ is not cofibrant. Indeed, the projectivity of the diagonal
$kG$-module $M \otimes_k B$ would imply that the underlying $k$-module
$M \otimes_k B$ is projective. It would then follow that $M$ is
$k$-projective and hence that the $k$-module $M'$ is also projective.
\end{Example}

\noindent
For any $kG$-module $M$, the cofibrant dimension of $M$ is precisely
the projective dimension of $M\otimes_k B$.
Recall that the Gorenstein cohomological dimension ${\rm Gcd}_k G$
of a group $G$ is defined as the Gorenstein projective dimension of
the trivial $kG$-module $k$. For a commutative ring $k$ of finite
global dimension, it follows from \cite[Theorem 1.7]{ET1} that
${\rm Gcd}_k G < \infty$ if and only if the group algebra $kG$ is
Gorenstein. Moreover, it is easily seen that
${\rm proj.dim}_{kG} B <\infty$ if and only if
${\rm proj.dim}_{kG}M\otimes_k B <\infty$ for any $kG$-module $M$
which has finite projective dimension as a $k$-module.

\begin{Proposition}
Let $k$ be a ring of finite global dimension and consider a group $G$.

(i) If ${\rm proj.dim}_{kG} B <\infty$, then ${\rm Gcd}_k G < \infty$
and ${\tt Cof}(kG) = {\tt GProj}(kG)$.

(ii) If ${\tt Cof}(kG) = {\tt GProj}(kG)$, then
${\rm proj.dim}_{kG} B = {\rm Gcd}_k G$.
\end{Proposition}

\begin{proof}
(i) If ${\rm proj.dim}_{kG} B$ is finite, it follows from
\cite[Theorem 1.7]{ET1} that $B$ is a characteristic module
and ${\rm Gcd}_k G < \infty$. Since any $kG$-module $M$ has
finite projective dimension as a $k$-module, the finiteness
of ${\rm proj.dim}_{kG} B$ implies that
${\rm proj.dim}_{kG} M\otimes_k B <\infty$, so that $M$ has
finite cofibrant dimension. Hence, Lemma \ref{lem:bound-cd}
implies that ${\tt Cof}(kG) = {\tt GProj}(kG)$.

(ii) If ${\tt Cof}(kG) = {\tt GProj}(kG)$, then the Gorenstein
cohomological dimension ${\rm Gcd}_k G$ of $G$ is equal to the
cofibrant dimension of the trivial $kG$-module $k$, i.e.\ to
the projective dimension ${\rm proj.dim}_{kG} B$ of $B$
\end{proof}

\noindent
A $kG$-module $M$ is fibrant \cite{ER2} if the diagonal
$kG$-module ${\rm Hom}_k(B,M)$ is injective. Fibrant modules
are closely related to Gorenstein injective modules; we refer
to \cite{Hol, EJ} for this notion. Using the dual version of
\cite[Theorem 3.5]{CK}, it is shown in \cite[Proposition 5.6(i)]{ET2}
that any fibrant module is Gorenstein injective. Hence, the
class ${\tt Fib}(kG)$ of fibrant modules is always contained
in the class ${\tt GInj}(kG)$ of Gorenstein injective modules.
A $kG$-module $M$ is cofibrant-flat \cite{ER2} if the (diagonal)
$kG$-module $M \otimes_k B$ is flat; we shall denote by
${\tt Cof.flat}(kG)$ the class of cofibrant-flat modules. These
modules are closely related to Gorenstein flat modules. We refer
to \cite{EJ, Hol} for this notion and let ${\tt GFlat}(kG)$ denote
the class of Gorenstein flat $kG$-modules. Using the homological
version of \cite[Theorem 3.5]{CK}, it is shown in
\cite[Proposition 5.2(i)]{ET2} that any cofibrant-flat module is
Gorenstein flat and hence ${\tt Cof.flat}(kG) \subseteq {\tt GFlat}(kG)$.
If $k$ has finite global dimension and $G$ is a group that is either
contained in the class ${\scriptstyle{{\bf LH}}}\mathfrak{F}$ or else
has type $\Phi$, then \cite[Proposition 4.3]{ER2} (resp.\
\cite[Proposition 2.3]{ER2}) implies that
${\tt Fib}(kG)={\tt GInj}(kG)$ (resp. ${\tt Cof.flat}(kG)= {\tt GFlat}(kG)$).
In analogy to the conjecture by Dembegioti and Talelli \cite{DT}
about cofibrant modules, we may ask whether these equalities are
always true.

\section{The cofibrant derived category}

\noindent
In this section we introduce a relative derived category,
the cofibrant derived category, describe the bounded cofibrant
derived category and express the bounded derived category as
a Verdier quotient of the bounded cofibrant derived category.
These results will be useful in studying (in the next section)
the cofibrant singularity category.

If $R$ is a ring and $\mathcal{A}$ is a class of $R$-modules,
then the relative derived category ${\bf D}_{\mathcal{A}}(R)$
with respect to $\mathcal{A}$ is obtained by formally inverting
all $\mathcal{A}$-quasi-isomorphisms. Here, a morphism of complexes
$f: X\rightarrow Y$ is called an $\mathcal{A}$-quasi-isomorphism,
if the cochain map
\[ {\rm Hom}_R(A,f): {\rm Hom}_R(A, X) \rightarrow {\rm Hom}_R(A,Y) \]
is a quasi-isomorphism for all $A\in \mathcal{A}$. We say that
a complex of $R$-modules $X$ is $\mathcal{A}$-acyclic, if the
complex of abelian groups ${\rm Hom}_R(A, X)$ is acyclic for
all $A\in \mathcal{A}$. It follows that $f$ is an
$\mathcal{A}$-quasi-isomorphism if and only if its mapping cone
${\rm Con}(f)$ is $\mathcal{A}$-acyclic; cf.\
\cite[Chapter 5, Theorem 1.10.2]{GM}. Let
${\bf K}_{\mathcal{A}\text{-ac}}(R) \subseteq {\bf K}(R)$ be the
full triangulated subcategory of the homotopy category, which
consists of all $\mathcal{A}$-acyclic complexes. Then, the
relative derived category ${\bf D}_{\mathcal{A}}(R)$ is precisely
the Verdier quotient of ${\bf K}(R)$ modulo the subcategory
${\bf K}_{\mathcal{A}\text{-ac}}(R)$. We recall that the subcategory
$\mathcal{A}$ is said to be admissible, if it is contravariantly finite
(so that any module has a right $\mathcal{A}$-approximation) and
each right $\mathcal{A}$-approximation is surjective; cf.\ \cite{Chen}.
In that case, the relative derived category ${\bf D}_{\mathcal{A}}(R)$
coincides with Neeman's derived category of the exact category
$(R\text{-Mod},\mathcal{E}_{\mathcal{A}})$, where
$\mathcal{E}_{\mathcal{A}}$ is the class of $\mathcal{A}$-acyclic
short exact sequences; cf.\ \cite[Construction 1.5]{N1} or \cite{Kel}.
We also note that if $\mathcal{A}$ is the class of Gorenstein
projective modules, then ${\bf D}_{\mathcal{A}}(R)$ is the
Gorenstein derived category, in the sense of Gao and Zhang \cite{GZ}.

\vspace{0.1in}

\noindent
{\sc I.\ Inverting cofibrant-quasi-isomorphisms.}
We say that an acyclic complex $X$ of $kG$-modules is
cofibrant-acyclic if the complex ${\rm Hom}_{kG}(C,X)$
is acyclic for any cofibrant module $C$.
To simplify the notation, we denote the homotopy category
${\bf K}_{{\tt Cof(kG)}\text{-ac}}(kG)$ of cofibrant-acyclic
(${\tt Cof}(kG)$-acyclic) complexes over the group algebra $kG$ by
${\bf K}_{cac}(kG)$. There is an important characterization of thick
subcategories, due to Rickard: a full triangulated subcategory
$\mathcal{K}$ of a triangulated  category $\mathcal{T}$ is thick
if and only if every direct summand of an object of $\mathcal{K}$
is in $\mathcal{K}$; cf.\ \cite[Proposition 1.3]{Ric} or
\cite[Criterion 1.3]{N1}. By Rickard's criterion, it follows
immediately that for $* \in \{{blank, -, +, b}\}$,
${\bf K}^{*}_{cac}(kG)$ is a thick subcategory of ${\bf K}^{*}(kG)$.

The {\em cofibrant derived category} ${\bf D}^{*}_{\tt Cof}(kG)$ is
defined to be the Verdier quotient of ${\bf K}^{*}(kG)$ modulo the
thick subcategory  ${\bf K}^{*}_{cac}(kG)$, i.e.
\[ {\bf D}^{*}_{\tt Cof}(kG): = {\bf K}^{*}(kG) / {\bf K}^{*}_{cac}(kG)
= S^{-1}{\bf K}^{*}(kG) . \]
Here, $S$ is the compatible multiplicative system of morphisms
determined by ${\bf K}^{*}_{cac}(kG)$, i.e.\ the collection of
all ${\tt Cof}(kG)$-quasi-isomorphisms in ${\bf K}^{*}(kG)$. As
an immediate consequence of \cite[Proposition 2.6]{CFH}, it
follows that a morphism $f: X\rightarrow Y$ in ${\bf K}(kG)$ is
a ${\tt Cof}(kG)$-quasi-isomorphism if and only if it induces an
isomorphism
\[ {\rm Hom}_{{\bf K}(kG)}(M, X[n]) \longrightarrow
   {\rm Hom}_{{\bf K}(kG)}(M, Y[n])\]
for any complex $M\in {\bf K}^{-}({\tt Cof}(kG))$ and any integer $n$.
Consequently, by a standard argument we have the following result, which
implies that the functor
$F:{\bf K}^{*}({\tt Cof}(kG)) \rightarrow {\bf D}^{*}_{\tt Cof}(kG)$
obtained as the composition of the embedding
${\bf K}^{*}({\tt Cof}(kG)) \rightarrow {\bf K}^{*}(kG)$ followed by
the localization functor $Q: {\bf K}^{*}(kG)\rightarrow {\bf D}^{*}_{\tt Cof}(kG)$
is fully faithful for $*\in \{b, -\}$.

\begin{Lemma}\label{lem:iso1}
Let $M$ belong to ${\bf K}^{-}({\tt Cof}(kG))$ and $X$ be an
arbitrary complex of $kG$-modules. Then, the canonical map
$f \mapsto f/{\rm Id}_{M}$ is an isomorphism of abelian groups
\[ \varphi : {\rm Hom}_{{\bf K}(kG)}(M, X) \longrightarrow
{\rm Hom}_{{\bf D}_{{\tt Cof}}(kG)}(M, X) . \]
\end{Lemma}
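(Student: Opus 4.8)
The plan is to compute the right-hand side by means of the calculus of right fractions that defines the Verdier quotient ${\bf D}_{\tt Cof}(kG) = S^{-1}{\bf K}(kG)$, and to reduce both the surjectivity and the injectivity of $\varphi$ to Lemma \ref{lem:c-q-iso}. First I would observe that $\varphi$ is nothing but the map induced on Hom-groups by the localization functor $Q: {\bf K}(kG) \longrightarrow {\bf D}_{\tt Cof}(kG)$, since $Q(f) = f/{\rm Id}_M$; in particular $\varphi$ is a homomorphism of abelian groups, and it remains to prove that it is bijective.

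For surjectivity, I would start with an arbitrary morphism $M \longrightarrow X$ in ${\bf D}_{\tt Cof}(kG)$, represented by a right fraction $M \stackrel{s}\Longleftarrow Z \stackrel{a}\longrightarrow X$ with $s \in S$, i.e.\ with $s$ a ${\tt Cof}(kG)$-quasi-isomorphism. Since $M$ belongs to ${\bf K}^{-}({\tt Cof}(kG))$, Lemma \ref{lem:c-q-iso} applied to $s$ (with $n = 0$) tells us that post-composition with $s$ yields a bijection ${\rm Hom}_{{\bf K}(kG)}(M, Z) \longrightarrow {\rm Hom}_{{\bf K}(kG)}(M, M)$; hence there is a morphism $g: M \longrightarrow Z$ in ${\bf K}(kG)$ with $sg = {\rm Id}_M$. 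Precomposing the roof $(s,a)$ with $g$ and using that $sg = {\rm Id}_M$ lies in $S$, the fractions $a/s$ and $(ag)/{\rm Id}_M$ become equivalent, so the given morphism equals $\varphi(ag)$.

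For injectivity, I would take $f: M \longrightarrow X$ in ${\bf K}(kG)$ with $\varphi(f) = f/{\rm Id}_M = 0$ in ${\bf D}_{\tt Cof}(kG)$. By the standard criterion for a right fraction to vanish, there is a ${\tt Cof}(kG)$-quasi-isomorphism $h: Z' \longrightarrow M$ with $fh = 0$ in ${\bf K}(kG)$. Again by Lemma \ref{lem:c-q-iso}, post-composition with $h$ gives a surjection ${\rm Hom}_{{\bf K}(kG)}(M, Z') \longrightarrow {\rm Hom}_{{\bf K}(kG)}(M, M)$, so there is $g: M \longrightarrow Z'$ with $hg = {\rm Id}_M$. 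Then $f = f \, {\rm Id}_M = (fh)g = 0$ in ${\bf K}(kG)$, which proves that $\varphi$ is injective.

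The only point that needs care is the bookkeeping inside the calculus of fractions — verifying precisely that $a/s$ is equivalent to $(ag)/{\rm Id}_M$ once $sg = {\rm Id}_M$, and that $f/{\rm Id}_M = 0$ forces $fh = 0$ for some $h \in S$. Once these formalities are in place, everything follows from Lemma \ref{lem:c-q-iso}, whose content is exactly that, when one maps out of an object of ${\bf K}^{-}({\tt Cof}(kG))$, every ${\tt Cof}(kG)$-quasi-isomorphism becomes a split epimorphism in ${\bf K}(kG)$; so no genuine obstacle is expected.
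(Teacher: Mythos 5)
Your argument is correct and is essentially the paper's own proof: both reduce surjectivity and injectivity to Lemma \ref{lem:c-q-iso}, using it to split the ${\tt Cof}(kG)$-quasi-isomorphism in the roof (resp.\ the one witnessing $f/{\rm Id}_M=0$) and then concluding via the standard identities of the calculus of right fractions. The bookkeeping you flag ($a/s=(ag)/(sg)=(ag)/{\rm Id}_M$, and the vanishing criterion $f/{\rm Id}_M=0\Leftrightarrow fh\sim 0$ for some $h\in S$) is exactly what the paper invokes, so there is no gap.
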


\noindent
The following result is also standard. We note that assertion
(i) analogous to \cite[Proposition 2.7]{GZ}, where the Gorenstein
derived category is considered.

\begin{Proposition}\label{prop:subDcat}
(i) ${\bf D}^{b}_{\tt Cof}(kG)$ and ${\bf D}^{-}_{\tt Cof}(kG)$ are
triangulated subcategories of ${\bf D}_{\tt Cof}(kG)$.

(ii) For $*\in \{b, -\}$, ${\bf K}^{*}({\tt Cof}(kG))$ can be viewed
as a triangulated subcategory of ${\bf D}^{*}_{\tt Cof}(kG)$.
\end{Proposition}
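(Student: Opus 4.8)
The plan is to establish (i) and (ii) in tandem, since the second statement refines the first. For part (i), recall that a triangulated subcategory of a triangulated category must be closed under the shift functor (in both directions), under isomorphisms, and under taking cones of morphisms within the subcategory; equivalently, it must be a full replete subcategory closed under extensions (completing triangles). First I would observe that the shift functor on $\mathbf{K}(kG)$ restricts to $\mathbf{K}^{b}(kG)$ and $\mathbf{K}^{-}(kG)$, hence descends to $\mathbf{D}^{b}_{\tt Cof}(kG)$ and $\mathbf{D}^{-}_{\tt Cof}(kG)$ and is compatible with the inclusion into $\mathbf{D}_{\tt Cof}(kG)$. Next, given a morphism $f/s\colon X\to Y$ in $\mathbf{D}_{\tt Cof}(kG)$ between objects of $\mathbf{D}^{b}_{\tt Cof}(kG)$, presented by $X\overset{s}{\Longleftarrow}Z\overset{f}{\to}Y$, the cone of $f/s$ in the Verdier quotient is represented by $\mathrm{Con}(f)$; the point is that one may choose the roof so that $Z$ is also a bounded complex. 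This is where the calculus of fractions does the work: since $s\colon Z\to X$ is a ${\tt Cof}(kG)$-quasi-isomorphism with $X$ bounded, one can truncate $Z$ appropriately — or invoke the standard fact that the bounded homotopy category, localized at the ${\tt Cof}(kG)$-quasi-isomorphisms it contains, embeds fully faithfully in the unbounded localization — so that $\mathrm{Con}(f)\in\mathbf{K}^{b}(kG)$. A cleaner route is to note that the essential image of $\mathbf{K}^{b}(kG)$ (resp.\ $\mathbf{K}^{-}(kG)$) in $\mathbf{D}_{\tt Cof}(kG)$ is a full subcategory closed under shifts and under the cone construction carried out inside $\mathbf{K}^{b}(kG)$ (resp.\ $\mathbf{K}^{-}(kG)$) before localizing, and that cones computed in $\mathbf{K}(kG)$ agree with cones computed in $\mathbf{D}_{\tt Cof}(kG)$ up to the canonical functor; hence the image is triangulated, and one identifies it with $\mathbf{D}^{b}_{\tt Cof}(kG)$ (resp.\ $\mathbf{D}^{-}_{\tt Cof}(kG)$) using that the quotient of a bounded homotopy category by a thick subcategory is computed by the same calculus of fractions.

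For part (ii), the content is that the composite functor $\mathbf{K}^{*}({\tt Cof}(kG))\hookrightarrow\mathbf{K}^{*}(kG)\overset{Q}{\longrightarrow}\mathbf{D}^{*}_{\tt Cof}(kG)$ is fully faithful, so that $\mathbf{K}^{*}({\tt Cof}(kG))$ may be identified with its essential image, which is automatically a triangulated subcategory (it is closed under shifts and cones, both computed inside $\mathbf{K}^{*}({\tt Cof}(kG))$ since cofibrant modules are closed under finite direct sums and the cone of a morphism of complexes of cofibrant modules is again a complex of cofibrant modules). Full faithfulness is the crux. For $*\in\{b,-\}$, both source objects lie in $\mathbf{K}^{-}({\tt Cof}(kG))$, so I would apply Lemma \ref{lem:iso1}: for $M,N\in\mathbf{K}^{-}({\tt Cof}(kG))$, the canonical map
\[ {\rm Hom}_{\mathbf{K}(kG)}(M,N)\longrightarrow{\rm Hom}_{\mathbf{D}_{\tt Cof}(kG)}(M,N) \]
is an isomorphism (taking $X=N$ in that lemma). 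Composing with the fully faithful inclusions $\mathbf{K}^{*}({\tt Cof}(kG))\hookrightarrow\mathbf{K}^{*}(kG)$ and $\mathbf{K}^{*}(kG)\to\mathbf{D}^{*}_{\tt Cof}(kG)$ — the latter having the property, by part (i) and the identification of $\mathbf{D}^{*}_{\tt Cof}(kG)$ with the essential image, that ${\rm Hom}_{\mathbf{D}^{*}_{\tt Cof}(kG)}(M,N)\cong{\rm Hom}_{\mathbf{D}_{\tt Cof}(kG)}(M,N)$ — gives that $\mathbf{K}^{*}({\tt Cof}(kG))\to\mathbf{D}^{*}_{\tt Cof}(kG)$ is fully faithful, as desired.

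The main obstacle I anticipate is bookkeeping rather than any deep idea: one must be careful that Lemma \ref{lem:iso1} is stated for $\mathbf{D}_{\tt Cof}(kG)$ (the unbounded version) whereas part (ii) concerns $\mathbf{D}^{*}_{\tt Cof}(kG)$ for $*\in\{b,-\}$, so one genuinely needs the compatibility of the bounded and unbounded constructions established in part (i) — concretely, that the natural functor $\mathbf{D}^{*}_{\tt Cof}(kG)\to\mathbf{D}_{\tt Cof}(kG)$ is fully faithful with essential image as described. I would therefore prove part (i) first in the form "the canonical functor from the localization of $\mathbf{K}^{*}(kG)$ at its ${\tt Cof}(kG)$-quasi-isomorphisms into $\mathbf{D}_{\tt Cof}(kG)$ is fully faithful," which is the standard lemma that a quotient by a thick subcategory behaves well under passage to $\mathbf{K}^{b}$ and $\mathbf{K}^{-}$; this requires checking that every ${\tt Cof}(kG)$-quasi-isomorphism in $\mathbf{K}(kG)$ with bounded (resp.\ bounded-above) source or target can be "absorbed" by one lying in $\mathbf{K}^{b}(kG)$ (resp.\ $\mathbf{K}^{-}(kG)$), which one gets from soft truncation and Lemma \ref{lem:c-q-iso}. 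Once that embedding is in place, both (i) and (ii) follow by the formal arguments above.
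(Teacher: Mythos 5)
Your proposal is correct and follows essentially the same route as the paper: part (i) is the standard argument for embedding bounded/bounded-above localizations into the unbounded one (the paper simply cites the analogous \cite[Proposition 2.7]{GZ}), and part (ii) is obtained, exactly as in the paper, by applying Lemma \ref{lem:iso1} to the composite of the inclusion ${\bf K}^{*}({\tt Cof}(kG))\hookrightarrow{\bf K}^{*}(kG)$ with the localization functor to conclude full faithfulness.
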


\noindent
In order to characterize the bounded cofibrant derived category, we
consider the following subcategory of ${\bf K}^{-}({\tt Cof}(kG))$:
\[ {\bf K}^{-,cab}({\tt Cof}(kG)):= \left\{X\in {\bf K}^{-}({\tt Cof}(kG))
   \, \vline \,\begin{matrix}  \text{there exists }
   n=n(X)\in\mathbb{Z}, \text{ such that }\\
   {\rm H}^{i}{\rm Hom}_{kG}(M, X) = 0, \forall i\leq n,
   \forall M\in {\tt Cof}(kG)  \end{matrix} \right\},\]
where the superscript ``cab'' stands for ``${\tt Cof}(kG)$-acyclic bounded''.

\begin{Lemma}\label{lem:res}
There exists a functor
$\Theta: {\bf K}^{b}(kG) \rightarrow {\bf K}^{-,cab}({\tt Cof}(kG))$,
such that for each $X\in {\bf K}^{b}(kG)$ there is a functorial
${\tt Cof}(kG)$-quasi-isomorphism
$\theta_X: \Theta(X)\rightarrow X$.
\end{Lemma}

\begin{proof}
Let $X$ be a complex in ${\bf K}^{b}(kG)$ and denote by $w(X)$ its width,
i.e.\ the number of its non-zero components. If $w(X) = 1$, then $X$ is a
complex consisting of a single module $N$ in some degree and zeroes elsewhere.
It follows from \cite[Theorem 3.3]{ER1} that $({\tt Cof}(kG), {\tt Cof}(kG)^\perp)$
is a complete cotorsion pair. Hence, there is a short exact sequence
\[ 0\rightarrow K^{-1}\rightarrow C^0\rightarrow N\rightarrow 0 . \]
where $C^0 \in {\tt Cof}(kG)$ and $K^{-1} \in {\tt Cof}(kG)^\perp$, so that
$C^0 \rightarrow N$ is a special right ${\tt Cof}(kG)$-approximation of $N$.
Considering successive special right ${\tt Cof}(kG)$-approximations,
we obtain an acyclic complex
\[ \cdots \rightarrow C^{-1}\rightarrow C^0
   \stackrel{p}{\rightarrow} N\rightarrow 0 . \]
Let $\Theta(X)$ be the suitably shifted and deleted complex, so that
the linear map $p$ defines a ${\tt Cof}$-quasi-isomorphism
$\theta_X : \Theta(X)\rightarrow X$. Inductively,
assume that $w(X)\geq 2$, and there is an integer $n$
such that $X^n\neq 0$ and $X^i = 0$ for any $i<n$. Let $X' = X^{\geq n+1}$ be
the left brutal truncation of $X$ at $n+1$, and $X'' = X^n[-n]$ be the complex
with $X^n$ concentrated in degree $n$. There is a map
$f: X''[-1]\rightarrow X'$, which induces a distinguished triangle
\[ X''[-1] \stackrel{f}\longrightarrow X'\longrightarrow X\longrightarrow X'' \]
in ${\bf K}^{-}(kG)$. By induction, we have the following diagram
\[ \xymatrix{ \Theta(X''[-1]) \ar[r]^{\Theta(f)} \ar[d]_{\theta_{X''[-1]}}
& \Theta(X') \ar[r] \ar[d]_{\theta_{X'}}
& {\rm Con}(\Theta(f)) \ar[r]\ar@{-->}[d]_{\theta_X} & \Theta(X''[-1])[1]\\
X''[-1] \ar[r]^{f} & X' \ar[r] & X \ar[r] &X'' }\]
Let $\Theta(X) = {\rm Con}(\Theta(f))$. Then,
$\Theta(X) \in {\bf K}^{-,cab}({\tt Cof}(kG))$ and there exists
a ${\tt Cof}(kG)$-quasi-isomorphism $\theta_X: \Theta(X)\rightarrow X$.
It is standard that the $\theta_X$'s are functorial.
\end{proof}

\noindent
We can now describe the bounded cofibrant derived category; cf.\
\cite[Theorem 3.6(ii)]{GZ} for the case of the bounded Gorenstein
derived category of a finite dimensional algebra. The result
follows by considering the functor
$F: {\bf K}^{-,cab}({\tt Cof}(kG)) \rightarrow
    {\bf D}^{-}_{\tt Cof}(kG)$, which is fully faithful and dense
by invoking Proposition \ref{prop:subDcat} and Lemma \ref{lem:res}.

\begin{Theorem}\label{thm:bCDcat}
There is a triangle-equivalence
${\bf D}^{b}_{\tt Cof}(kG)\simeq {\bf K}^{-,cab}({\tt Cof}(kG))$.
\end{Theorem}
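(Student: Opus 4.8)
The plan is to establish the triangle-equivalence by producing a functor in each direction and checking that the compositions are naturally isomorphic to the identity. The central ingredient is Lemma \ref{lem:res}, which supplies the ``cofibrant resolution'' functor $\Theta$, and Lemma \ref{lem:iso1}, which computes Hom-groups out of objects of ${\bf K}^{-}({\tt Cof}(kG))$ in the cofibrant derived category. First I would observe that every bounded complex $X$ lies in ${\bf K}^b(kG)$, and the ${\tt Cof}(kG)$-quasi-isomorphism $\theta_X:\Theta(X)\to X$ of Lemma \ref{lem:res} becomes an isomorphism in ${\bf D}_{\tt Cof}(kG)$; hence the composite ${\bf K}^{-,cab}({\tt Cof}(kG))\hookrightarrow {\bf K}^{-}({\tt Cof}(kG))\to {\bf D}_{\tt Cof}(kG)$ has image containing (up to isomorphism) the image of ${\bf K}^b(kG)$, which is exactly ${\bf D}^b_{\tt Cof}(kG)$ by definition. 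So the natural candidate for the equivalence is the restriction of the localization functor $F$ of Proposition \ref{prop:subDcat}(ii) to the subcategory ${\bf K}^{-,cab}({\tt Cof}(kG))$, landing in ${\bf D}^b_{\tt Cof}(kG)$.

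The key steps, in order, are: (1) Show $F$ maps ${\bf K}^{-,cab}({\tt Cof}(kG))$ into ${\bf D}^b_{\tt Cof}(kG)$. For $X\in {\bf K}^{-,cab}({\tt Cof}(kG))$, by hypothesis there is $n=n(X)$ with ${\rm H}^i{\rm Hom}_{kG}(M,X)=0$ for all $i\le n$ and all $M\in {\tt Cof}(kG)$; I would use this to produce a ${\tt Cof}(kG)$-quasi-isomorphism from the brutal truncation $\sigma^{\ge n}X$ (or a suitable soft truncation that stays in ${\tt Cof}(kG)$-degrees) to $X$, exhibiting $X$ as isomorphic in ${\bf D}_{\tt Cof}(kG)$ to a bounded complex, hence landing in ${\bf D}^b_{\tt Cof}(kG)$. (2) Full faithfulness of $F$ on ${\bf K}^{-,cab}({\tt Cof}(kG))$: this is immediate from Lemma \ref{lem:iso1}, since both source objects are in ${\bf K}^{-}({\tt Cof}(kG))$, so ${\rm Hom}_{{\bf D}_{\tt Cof}}(M,N)\cong{\rm Hom}_{{\bf K}(kG)}(M,N)$ and the same with $N$ replaced by anything. (3) Essential surjectivity: given any object of ${\bf D}^b_{\tt Cof}(kG)$, represented by some $X\in {\bf K}^b(kG)$, apply $\Theta$ to get $\Theta(X)\in {\bf K}^{-,cab}({\tt Cof}(kG))$ with $\theta_X:\Theta(X)\to X$ a ${\tt Cof}(kG)$-quasi-isomorphism, whence $F(\Theta(X))\cong X$ in ${\bf D}^b_{\tt Cof}(kG)$. (4) Check that $F$ restricted this way is a triangulated functor and that its image is a triangulated subcategory, so the equivalence respects the triangulated structure; this follows from Proposition \ref{prop:subDcat} and the fact that $\Theta$ takes distinguished triangles in ${\bf K}^b(kG)$ to distinguished triangles (visible from the cone construction in the proof of Lemma \ref{lem:res}).

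I expect the main obstacle to be step (1) — verifying that $F$ genuinely lands in the \emph{bounded} cofibrant derived category, i.e.\ that the ``cofibrant-acyclic bounded'' condition is exactly what is needed to make an unbounded-below complex of cofibrants isomorphic, in ${\bf D}_{\tt Cof}(kG)$, to a bounded one. The subtlety is that a brutal truncation of a complex of cofibrant modules need not remain ${\tt Cof}(kG)$-quasi-isomorphic to the original in general; one must use the vanishing of ${\rm H}^i{\rm Hom}_{kG}(M,X)$ below degree $n$ to control the comparison, presumably by checking on the level of the mapping cone of the truncation map and invoking Lemma \ref{lem:c-q-iso} together with the characterization of ${\tt Cof}(kG)$-acyclic complexes. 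A secondary point requiring care is confirming that $\Theta(X)$ really does satisfy the defining condition of ${\bf K}^{-,cab}({\tt Cof}(kG))$ when $X$ is bounded — this is asserted in Lemma \ref{lem:res}, so I may invoke it, but its role here is what makes essential surjectivity land in the right subcategory rather than merely in ${\bf K}^{-}({\tt Cof}(kG))$. Once these truncation/boundedness bookkeeping issues are settled, the rest is the formal ``adjoint-functor-free'' verification that a fully faithful, essentially surjective exact functor is a triangle-equivalence.
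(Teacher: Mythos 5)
Your proposal follows essentially the same route as the paper: the functor is the composite of the embedding ${\bf K}^{-,cab}({\tt Cof}(kG))\hookrightarrow{\bf K}^{-}(kG)$ with the localization, its image is shown to be bounded via a (soft, not brutal) truncation at the degree $n(X)$ below which ${\rm H}^i{\rm Hom}_{kG}(M,X)$ vanishes, full faithfulness comes from Lemma \ref{lem:iso1} via Proposition \ref{prop:subDcat}, and density comes from the resolution functor $\Theta$ of Lemma \ref{lem:res}. The subtlety you flag in step (1) is real and is resolved exactly as you suggest: the paper replaces $X$ by the left soft truncation $\cdots\to 0\to{\rm ker}\,d^n_X\to X^n\to\cdots$, to which the canonical map is a ${\tt Cof}(kG)$-quasi-isomorphism.
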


\begin{Remark}
We say that a morphism of complexes $f: X\rightarrow Y$ is
a fibrant-quasi-isomorphism if the cochain map
${\rm Hom}_{kG}(f, M): {\rm Hom}_{kG}(Y, M)\rightarrow {\rm Hom}_{kG}(X, M)$
is a quasi-isomorphism for any fibrant module $M$. Since
injective $kG$-modules are fibrant, every fibrant-quasi-isomorphism
is a quasi-isomorphism and its mapping cone is acyclic. Hence, a
morphism of complexes $f$ is a fibrant-quasi-isomorphism if and
only if the mapping cone ${\rm Con}(f)$ remains acyclic after applying
the functor ${\rm Hom}_{kG}(\_\!\_ , M)$ for any fibrant $kG$-module
$M$. Dually to the above, one can define the fibrant derived category
${\bf D}^{*}_{\tt Fib}(kG)$ to be the localization of ${\bf K}^{*}(kG)$
with respect to fibrant-quasi-isomorphisms.
\end{Remark}

\vspace{0.05in}

\noindent
{\sc II.\ Comparison with the bounded derived category.}
We shall conclude this section by comparing the bounded cofibrant
derived category ${\bf D}^{b}_{\tt Cof}(kG)$ and the bounded derived
category ${\bf D}^{b}(kG)$. We need the following result; the proof
is analogous to \cite[Lemma 5.2(ii)]{KZ}.

\begin{Lemma}\label{lem:b+ac}
Let $C\in {\bf K}^{-,cab}({\tt Cof}(kG))$. If $C$ is acyclic, then
$C \in {\bf K}^{b}_{ac}({\tt Cof}(kG))$.
\end{Lemma}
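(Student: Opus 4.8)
The plan is to show that an acyclic complex $C$ belonging to ${\bf K}^{-,cab}({\tt Cof}(kG))$ is, in fact, homotopy equivalent to a bounded complex of cofibrant modules, i.e.\ an object of ${\bf K}^{b}_{ac}({\tt Cof}(kG))$. First I would record what the hypotheses give: $C$ is a bounded-above complex whose terms $C^i$ are cofibrant, $C$ is acyclic in the ordinary sense, and there is an integer $n = n(C)$ such that ${\rm H}^i {\rm Hom}_{kG}(M,C) = 0$ for all $i \leq n$ and all cofibrant $M$. Since $C$ is bounded above, there is also an integer $N$ with $C^i = 0$ for $i > N$; so the only possible obstruction to $C$ being ``essentially bounded'' is an infinite tail going off to $-\infty$.

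The key step is to truncate. Let $Z^{n} = {\rm ker}\, d_C^{n}$ and consider the soft truncation $\tau^{\geq n} C$, namely the complex
\[ \cdots \longrightarrow 0 \longrightarrow Z^{n} \longrightarrow C^{n} \stackrel{d^{n}}{\longrightarrow} C^{n+1} \longrightarrow \cdots \longrightarrow C^{N} \longrightarrow 0 , \]
with $Z^{n}$ placed in degree $n$. This is a bounded complex. The natural surjection $\pi : C \longrightarrow \tau^{\geq n} C$ has mapping cone equal (up to the usual shift) to the brutal truncation $C^{\leq n-1}$ together with the term $Z^{n}$, and one checks in the standard way that $\pi$ is a quasi-isomorphism; moreover, because ${\rm H}^i {\rm Hom}_{kG}(M, C) = 0$ for $i \leq n$, the map $\pi$ is also a ${\tt Cof}(kG)$-quasi-isomorphism. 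So far $\tau^{\geq n}C$ is bounded and acyclic (since $C$ is acyclic), but its degree-$n$ term $Z^{n}$ need not be cofibrant; this is where the real work lies. The point is that $Z^{n}$ is the $n$-th cocycle of the acyclic complex $C$ of cofibrant modules, so it sits in an acyclic resolution
\[ \cdots \longrightarrow C^{n-2} \longrightarrow C^{n-1} \longrightarrow Z^{n} \longrightarrow 0 , \]
which remains exact after applying ${\rm Hom}_{kG}(M,-)$ for every cofibrant $M$ (again by the vanishing of ${\rm H}^i {\rm Hom}_{kG}(M,C)$ below $n$). I would now invoke that the cotorsion pair $({\tt Cof}(kG), {\tt Cof}(kG)^\perp)$ is complete and hereditary (as used already in the proof of Lemma \ref{lem:res}), so that ${\tt Cof}(kG)$ is closed under kernels of epimorphisms between its members. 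Combined with the existence of the above ${\tt Cof}$-proper resolution, this forces $Z^{n}$ itself to be cofibrant: a dimension-shifting argument shows ${\rm Ext}^1_{kG}(Z^{n}, D) = 0$ for all $D \in {\tt Cof}(kG)^\perp$, hence $Z^{n} \in {}^{\perp}({\tt Cof}(kG)^\perp) = {\tt Cof}(kG)$.

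Once $Z^{n}$ is known to be cofibrant, $\tau^{\geq n}C$ is a bounded complex of cofibrant modules which is acyclic, i.e.\ $\tau^{\geq n}C \in {\bf K}^{b}_{ac}({\tt Cof}(kG))$. It remains to identify $C$ with $\tau^{\geq n}C$ inside ${\bf K}(kG)$: the quasi-isomorphism $\pi : C \longrightarrow \tau^{\geq n}C$ is a ${\tt Cof}(kG)$-quasi-isomorphism between objects of ${\bf K}^{-}({\tt Cof}(kG))$, so its mapping cone lies in ${\bf K}^{-}({\tt Cof}(kG))$ and is ${\tt Cof}(kG)$-acyclic; Lemma \ref{lem:C=0} then yields that the cone is $0$ in ${\bf K}(kG)$, whence $\pi$ is an isomorphism in ${\bf K}(kG)$ and $C \cong \tau^{\geq n}C \in {\bf K}^{b}_{ac}({\tt Cof}(kG))$, as claimed. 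The main obstacle is the middle step — proving that the truncated cocycle $Z^{n}$ is genuinely cofibrant — and the tool for it is the completeness and hereditariness of the cofibrant cotorsion pair together with the hypothesis that ${\rm Hom}_{kG}(M,C)$ is exact in low degrees.
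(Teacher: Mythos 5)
Your overall route is the paper's: truncate $C$ at degree $n$ to the bounded complex $C''=(0\to Z^n\to C^n\to\cdots)$ (note $Z^n=\ker d^n$ must sit in degree $n-1$, not $n$, so that $C^n$ stays in degree $n$), prove that $Z^n$ is cofibrant, and then use Lemma \ref{lem:C=0} to identify $C$ with $C''$ in ${\bf K}(kG)$. Your endgame is a correct variant of the paper's (the paper splits $C=C'\oplus C''$ and kills the tail $C'$ by Lemma \ref{lem:C=0}; you kill the cone of $\pi$ instead), and the assertion that $\pi$ is a ${\tt Cof}(kG)$-quasi-isomorphism does hold: ${\rm Hom}_{kG}(M,C'')$ has vanishing cohomology in degrees $\leq n$ because $Z^n=\ker d^n$, and agrees with ${\rm Hom}_{kG}(M,C)$ in degrees $>n$.

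The genuine gap is your argument that $Z^n$ is cofibrant. The existence of a proper left ${\tt Cof}(kG)$-resolution $\cdots\to C^{n-1}\to Z^n\to 0$ cannot force $Z^n$ to be cofibrant: by completeness of the cotorsion pair $({\tt Cof}(kG),{\tt Cof}(kG)^\perp)$, \emph{every} $kG$-module admits such a resolution --- this is precisely how $\Theta(X)$ is built in Lemma \ref{lem:res} --- so no conclusion about $Z^n$ can be drawn from it. Moreover, dimension shifting along a \emph{left} resolution identifies ${\rm Ext}^j_{kG}(Z^n,D)$ with ${\rm Ext}^{j-1}$ of a syzygy only for $j\geq 2$; it gives no control over ${\rm Ext}^1_{kG}(Z^n,D)$, which is exactly the group you need to kill. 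The closure of ${\tt Cof}(kG)$ under kernels of epimorphisms has to be applied on the \emph{other} side of $Z^n$, where the complex is finite: since $C$ is acyclic and bounded above, say $C^i=0$ for $i>N$, acyclicity yields short exact sequences $0\to\ker d^i\to C^i\to\ker d^{i+1}\to 0$ for all $i$; starting from $\ker d^N=C^N\in{\tt Cof}(kG)$ and descending through the finitely many degrees from $N$ to $n$, closure under kernels of epimorphisms shows each $\ker d^i$, in particular $Z^n$, is cofibrant. This is what the paper does ("$C$ is acyclic and upper bounded"). With that replacement, your proof goes through.
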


We shall use the following result from \cite[Corollaire 4-3]{V} repeatedly:
Let $\mathcal{T}_1$, $\mathcal{T}_2$ be triangulated subcategories
of a triangulated category $\mathcal{T}$ with
$\mathcal{T}_2 \subseteq \mathcal{T}_1$. Then, there is an isomorphism
of triangulated categories
$(\mathcal{T}/\mathcal{T}_2)/ (\mathcal{T}_1 / \mathcal{T}_2) \simeq
 \mathcal{T}/ \mathcal{T}_1$.
Since ${\bf D}^{*}_{\tt Cof}(kG) = {\bf K}^{*}(kG)/{\bf K}^{*}_{cac}(kG)$
and ${\bf D}^{*}(kG) = {\bf K}^{*}(kG) / {\bf K}^{*}_{ac}(kG)$, there
is an equivalence of triangulated categories
\[ {\bf D}^{*}(kG) \simeq {\bf D}^{*}_{\tt Cof}(kG) /
   ({\bf K}^{*}_{ac}(kG)/{\bf K}^{*}_{cac}(kG)).\]

\begin{Theorem}\label{thm:Dcat-CDcat}
There are triangle-equivalences
\[ \mathbf{D}^{b}(kG)\simeq\mathbf{D}_{\tt Cof}^{b}(kG) /
   {\bf K}^{b}_{ac}({\tt Cof}(kG)) \simeq
   {\bf K}^{-,cab}({\tt Cof}(kG))/{\bf K}^{b}_{ac}({\tt Cof}(kG)). \]
\end{Theorem}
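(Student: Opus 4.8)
The plan is to derive both equivalences from the equivalence
\[ {\bf D}^{b}(kG)\simeq {\bf D}^{b}_{\tt Cof}(kG)/\big({\bf K}^{b}_{ac}(kG)/{\bf K}^{b}_{cac}(kG)\big) \]
recorded just before the statement (itself an instance of Lemma \ref{lem:V}, legitimate because every ${\tt Cof}(kG)$-acyclic complex is acyclic, projective modules being cofibrant, so that ${\bf K}^{b}_{cac}(kG)\subseteq {\bf K}^{b}_{ac}(kG)\subseteq {\bf K}^{b}(kG)$), together with Lemma \ref{lem:res}, Lemma \ref{lem:b+ac} and Theorem \ref{thm:bCDcat}. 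Here ${\bf K}^{b}_{ac}(kG)/{\bf K}^{b}_{cac}(kG)$ is to be viewed, via the standard fully faithful comparison functor between Verdier quotients of nested triangulated subcategories, as a triangulated subcategory $\overline{\mathcal{A}}$ of ${\bf D}^{b}_{\tt Cof}(kG)$ whose objects are precisely the bounded acyclic complexes of $kG$-modules. The second triangle-equivalence in the statement will follow from the first one and Theorem \ref{thm:bCDcat}, so the real content is
\[ {\bf D}^{b}(kG)\simeq {\bf D}^{b}_{\tt Cof}(kG)/{\bf K}^{b}_{ac}({\tt Cof}(kG)). \]

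To establish this, I would compare $\overline{\mathcal{A}}$ with the subcategory ${\bf K}^{b}_{ac}({\tt Cof}(kG))$ of ${\bf D}^{b}_{\tt Cof}(kG)$ (embedded as in Proposition \ref{prop:subDcat}(ii); note a bounded complex automatically lies in ${\bf K}^{-,cab}({\tt Cof}(kG))$, since ${\rm Hom}_{kG}(M,-)$ sends a bounded complex to a bounded one). I claim the two have the same closure under isomorphism in ${\bf D}^{b}_{\tt Cof}(kG)$; since the Verdier quotient by a triangulated subcategory depends only on that closure, this would give the displayed equivalence. One containment is clear, a bounded acyclic complex of cofibrant modules being in particular a bounded acyclic complex. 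For the other, given a bounded acyclic complex $X$ of $kG$-modules, Lemma \ref{lem:res} supplies a ${\tt Cof}(kG)$-quasi-isomorphism $\theta_X\colon\Theta(X)\longrightarrow X$ with $\Theta(X)\in{\bf K}^{-,cab}({\tt Cof}(kG))$, which becomes invertible in ${\bf D}^{b}_{\tt Cof}(kG)$; so $X\cong\Theta(X)$ there. Since ${\rm Con}(\theta_X)$ is ${\tt Cof}(kG)$-acyclic, hence acyclic, and $X$ is acyclic, the distinguished triangle $\Theta(X)\longrightarrow X\longrightarrow{\rm Con}(\theta_X)\longrightarrow\Theta(X)[1]$ in ${\bf K}(kG)$ forces $\Theta(X)$ to be acyclic, whereupon Lemma \ref{lem:b+ac} gives $\Theta(X)\in{\bf K}^{b}_{ac}({\tt Cof}(kG))$. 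Thus $X$ lies in the isomorphism-closure of ${\bf K}^{b}_{ac}({\tt Cof}(kG))$.

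For the second equivalence I would transport the first along the triangle-equivalence $F\colon{\bf K}^{-,cab}({\tt Cof}(kG))\stackrel{\sim}{\longrightarrow}{\bf D}^{b}_{\tt Cof}(kG)$ of Theorem \ref{thm:bCDcat}. Since $F$ is, by construction, the inclusion into ${\bf K}^{-}(kG)$ followed by the localization functor, it agrees on ${\bf K}^{b}({\tt Cof}(kG))$ with the embedding of Proposition \ref{prop:subDcat}(ii), and hence it identifies the subcategory ${\bf K}^{b}_{ac}({\tt Cof}(kG))$ of ${\bf K}^{-,cab}({\tt Cof}(kG))$ with the subcategory ${\bf K}^{b}_{ac}({\tt Cof}(kG))$ of ${\bf D}^{b}_{\tt Cof}(kG)$. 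Therefore $F$ descends to a triangle-equivalence ${\bf K}^{-,cab}({\tt Cof}(kG))/{\bf K}^{b}_{ac}({\tt Cof}(kG))\simeq{\bf D}^{b}_{\tt Cof}(kG)/{\bf K}^{b}_{ac}({\tt Cof}(kG))$, and combined with the equivalence of the previous step this yields the whole chain.

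The step I expect to be the main obstacle is the comparison in the second paragraph: verifying that the acyclic objects of ${\bf D}^{b}_{\tt Cof}(kG)$ coming from arbitrary bounded complexes coincide, up to isomorphism, with those coming from complexes of cofibrant modules. This is precisely where Lemmas \ref{lem:res} and \ref{lem:b+ac} are indispensable, and it also requires spelling out carefully the (standard but slightly delicate) principle that replacing a triangulated subcategory by its isomorphism-closure does not alter the associated Verdier quotient.
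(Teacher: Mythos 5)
Your proposal is correct and follows essentially the same route as the paper: the paper also reduces the first equivalence to identifying ${\bf K}^{b}_{ac}({\tt Cof}(kG))$ with ${\bf K}^{b}_{ac}(kG)/{\bf K}^{b}_{cac}(kG)$ inside ${\bf D}^{b}_{\tt Cof}(kG)$ via Lemmas \ref{lem:res} and \ref{lem:b+ac} (phrased there as a fully faithful functor with dense image rather than as equality of isomorphism-closures), and likewise deduces the second equivalence directly from Theorem \ref{thm:bCDcat}.
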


\begin{proof}
The second equivalence is an immediate consequence of Theorem
\ref{thm:bCDcat}. Regarding the first equivalence, it suffices
to prove that
${\bf K}^{b}_{ac}({\tt Cof}(kG)) \simeq
 {\bf K}^{b}_{ac}(kG)/{\bf K}^{b}_{cac}(kG)$.
We restrict
$F : {\bf K}^{b}({\tt Cof}(kG)) \rightarrow
     {\bf K}^{b}(kG)/{\bf K}^{b}_{cac}(kG) =
     \mathbf{D}^{b}_{\tt Cof}(kG)$
to ${\bf K}^{b}_{ac}({\tt Cof}(kG))$ and obtain a functor
$F' : {\bf K}^{b}_{ac}({\tt Cof}(kG)) \rightarrow
      {\bf K}^{b}_{ac}(kG)/{\bf K}^{b}_{cac}(kG)$.
Since ${\bf K}^{b}_{ac}(kG)/{\bf K}^{b}_{cac}(kG)$ is a subcategory of
${\bf D}^{b}_{\tt Cof}(kG) = {\bf K}^{b}(kG)/{\bf K}^{b}_{cac}(kG)$,
Lemma \ref{lem:iso1} implies that the functor $F'$ is fully faithful.
For any $X \in {\bf K}^{b}_{ac}(kG)$, Lemma \ref{lem:res} implies the
existence of a ${\tt Cof}(kG)$-quasi-isomorphism
$C \rightarrow X$ with $C \in {\bf K}^{-,cab}({\tt Cof}(kG))$.
Since $X$ is acyclic, $C$ is also acyclic; then, Lemma \ref{lem:b+ac}
implies that $C \in {\bf K}^{b}_{ac}({\tt Cof}(kG))$. It follows that
$X \cong F'(C)$ in $\mathbf{D}^{b}_{\tt Cof}(kG)$ and hence $F'$ has
dense image. This completes the proof.
\end{proof}

\section{The cofibrant singularity category}

\noindent
In this section, we intend to study the cofibrant singularity category and
examine its relation to the singularity category and the stable category of
cofibrant modules.

For any abelian category $\mathcal{A}$ with enough projective objects, the
singularity category \cite{Buc, Or} is defined as the Verdier quotient
\[ {\bf D}^b_{sg}(\mathcal{A}):= {\bf D}^b(\mathcal{A}) /{\bf K}^b(\mathcal{P}(\mathcal{A}))
= {\bf K}^{-, b}(\mathcal{P}(\mathcal{A}))/{\bf K}^b(\mathcal{P}(\mathcal{A})). \]
We note that ${\bf D}^b_{sg}(\mathcal{A})=0$ if and only if $\mathcal{A}$ has finite
global dimension. There is a canonical functor
$F: \underline{{\tt GProj}}(\mathcal{A})\rightarrow
 {\bf D}^b_{sg}(\mathcal{A})$,
which sends every Gorenstein projective object in $\mathcal{A}$ to the corresponding
complex concentrated in degree zero. It follows from \cite[Theorem 4.4.1]{Buc}
and \cite[Theorem 2.1]{Ch} that the triangulated functor $F$ is fully faithful.
Following Bergh, J{\o}rgensen and Oppermann \cite{BJO}, the Gorenstein defect
category of $\mathcal{A}$ is defined to be the Verdier quotient
${\bf D}^b_{def}(\mathcal{A}):={\bf D}^b_{sg}(\mathcal{A})/{\rm Im}F$, where
${\rm Im}F \simeq \underline{{\tt GProj}}(\mathcal{A})$ is a thick subcategory of
${\bf D}^b_{sg}(\mathcal{A})$. It follows from \cite{BJO} that
${\bf D}^b_{def}(\mathcal{A})$ is trivial if and only if each object in $\mathcal{A}$
has finite Gorenstein projective dimension. Analogously, the Gorenstein singularity
category of $\mathcal{A}$ is defined as the Verdier quotient of the bounded Gorenstein
derived category modulo the homotopy category of bounded complexes of Gorenstein
projective objects; cf.\ \cite[Definition 4.1]{BDZ}. If the abelian category
$\mathcal{A}$ is CM-contravariantly finite, then \cite[Theorem 4.3]{BDZ} or
\cite[Theorem 6.7(ii)]{KZ} implies
that the Gorenstein singularity category is triangle-equivalent to the Gorenstein
defect category.

\vspace{0.1in}

\noindent
{\sc I.\ Description of the cofibrant singularity category.}
By Theorem \ref{thm:bCDcat},
${\bf D}^b_{\tt Cof}(kG) = {\bf K}^{-, cab}({\tt Cof}(kG))$. The
{\em cofibrant singularity category} is defined as the Verdier quotient
\[ {\bf D}^b_{{\tt Cof}.sg}(kG) =
   {\bf K}^{-, cab}({\tt Cof}(kG))/{\bf K}^b({\tt Cof}(kG)) =
   {\bf D}^b_{\tt Cof}(kG) /{\bf K}^b({\tt Cof}(kG)) . \]
Let ${\bf D}^{b}(kG)_{f{\tt Cof}}$ denote the full subcategory of the
bounded derived category ${\bf D}^b(kG)$ formed by those complexes
isomorphic to a bounded complex of cofibrant modules.

\begin{Lemma}\label{lem:tri-eq1}
There is a triangle-equivalence
${\bf D}^{b}(kG)_{f{\tt Cof}} \simeq
 {\bf K}^b({\tt Cof}(kG)) / {\bf K}^b_{ac}({\tt Cof}(kG))$.
\end{Lemma}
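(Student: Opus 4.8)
The plan is to construct a functor $\Phi : {\bf K}^b({\tt Cof}(kG)) \longrightarrow {\bf D}^b(kG)$ as the composition of the embedding ${\bf K}^b({\tt Cof}(kG)) \hookrightarrow {\bf K}^b(kG)$ followed by the canonical localization functor ${\bf K}^b(kG) \longrightarrow {\bf D}^b(kG)$, and to observe that $\Phi$ kills precisely the subcategory ${\bf K}^b_{ac}({\tt Cof}(kG))$. Indeed, a bounded complex of cofibrant modules which is acyclic has zero image in ${\bf D}^b(kG)$; conversely, if a bounded complex $C$ of cofibrant modules maps to $0$ in ${\bf D}^b(kG)$, then $C$ is acyclic (a complex is $0$ in the derived category exactly when it is acyclic), hence $C \in {\bf K}^b_{ac}({\tt Cof}(kG))$. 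By the universal property of the Verdier quotient, $\Phi$ therefore factors through a triangulated functor $\overline{\Phi} : {\bf K}^b({\tt Cof}(kG))/{\bf K}^b_{ac}({\tt Cof}(kG)) \longrightarrow {\bf D}^b(kG)$, whose essential image is, essentially by definition, the full subcategory ${\bf D}^b(kG)_{f{\tt Cof}}$. Thus $\overline{\Phi}$ is dense onto ${\bf D}^b(kG)_{f{\tt Cof}}$ by construction, and it remains only to check that $\overline{\Phi}$ is fully faithful.

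For faithfulness, the key point is that a morphism $f : C \longrightarrow D$ of bounded complexes of cofibrant modules which becomes zero in ${\bf D}^b(kG)$ must already factor through an acyclic bounded complex of cofibrant modules; this is a standard calculus-of-fractions argument, using that the cone of $f$ is then acyclic and, being a bounded complex of cofibrant modules (since ${\tt Cof}(kG)$ is closed under extensions and shifts), lies in ${\bf K}^b_{ac}({\tt Cof}(kG))$, so $f$ is zero in the quotient. For fullness, given a morphism $C \longrightarrow D$ in ${\bf D}^b(kG)$ between bounded complexes of cofibrant modules, presented by a left or right fraction through an acyclic cone, one has to lift the roof to one living entirely inside ${\bf K}^b({\tt Cof}(kG))$. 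The clean way to organize both directions at once is to show that the inclusion ${\bf K}^b({\tt Cof}(kG)) \hookrightarrow {\bf K}^b(kG)$ carries the class of (genuine) quasi-isomorphisms between cofibrant complexes to a localizing system and that ${\bf K}^b_{ac}({\tt Cof}(kG))$ is the corresponding kernel; equivalently, one invokes the general principle that localizing a triangulated subcategory $\mathcal{S} \subseteq \mathcal{T}$ at $\mathcal{S} \cap \mathcal{N}$, where $\mathcal{N}$ is a thick subcategory of $\mathcal{T}$, yields a fully faithful functor $\mathcal{S}/(\mathcal{S}\cap\mathcal{N}) \longrightarrow \mathcal{T}/\mathcal{N}$ whenever every morphism in $\mathcal{S}$ inverted in $\mathcal{T}/\mathcal{N}$ can be completed to an $\mathcal{S}$-fraction — here with $\mathcal{T} = {\bf K}^b(kG)$, $\mathcal{N} = {\bf K}^b_{ac}(kG)$, $\mathcal{S} = {\bf K}^b({\tt Cof}(kG))$.

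Concretely, I would first record that ${\bf K}^b_{ac}(kG) \cap {\bf K}^b({\tt Cof}(kG)) = {\bf K}^b_{ac}({\tt Cof}(kG))$ (immediate) and that ${\bf D}^b(kG) = {\bf K}^b(kG)/{\bf K}^b_{ac}(kG)$. Then I would verify the completion condition for fractions: given a quasi-isomorphism $s : X \longrightarrow C$ in ${\bf K}^b(kG)$ with $C \in {\bf K}^b({\tt Cof}(kG))$, I need a quasi-isomorphism $t : C' \longrightarrow X$ with $C' \in {\bf K}^b({\tt Cof}(kG))$ and $C'$ mapping suitably — this is where one uses the completeness of the cotorsion pair $({\tt Cof}(kG), {\tt Cof}(kG)^{\perp})$, exactly as in the proof of Lemma \ref{lem:res}, to produce a bounded cofibrant resolution of the (bounded) complex $X$ that is compatible with $s$. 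Once the completion condition holds, a standard lemma on Verdier quotients (of the type used implicitly in Lemma \ref{lem:iso1} and in Theorem \ref{thm:Dcat-CDcat}) gives both full faithfulness and the identification of the essential image with ${\bf D}^b(kG)_{f{\tt Cof}}$. I expect the main obstacle to be precisely this completion-of-fractions step: boundedness must be preserved when replacing an arbitrary bounded complex by a cofibrant resolution, so one cannot simply take a (possibly unbounded-below) ${\tt Cof}(kG)$-resolution, but must argue — again via Lemma \ref{lem:b+ac} or a brutal-truncation argument — that an acyclic upper-bounded complex of cofibrant modules splits off a bounded acyclic cofibrant subcomplex, keeping everything inside ${\bf K}^b({\tt Cof}(kG))$.
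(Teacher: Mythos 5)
Your route is genuinely different from the paper's. The paper does not verify Verdier's fully-faithfulness criterion directly for ${\bf K}^b({\tt Cof}(kG))\subseteq {\bf K}^b(kG)$; instead it takes the composite $F:{\bf K}^{-,cab}({\tt Cof}(kG))\to{\bf D}^{-}(kG)$, notes it kills ${\bf K}^b_{ac}({\tt Cof}(kG))$, invokes Theorem \ref{thm:Dcat-CDcat} to see that the induced functor ${\bf K}^{-,cab}({\tt Cof}(kG))/{\bf K}^b_{ac}({\tt Cof}(kG))\to{\bf D}^b(kG)$ is already an equivalence, and then simply restricts to the full subcategory ${\bf K}^b({\tt Cof}(kG))/{\bf K}^b_{ac}({\tt Cof}(kG))$ (full because the denominator is unchanged), identifying the essential image with ${\bf D}^b(kG)_{f{\tt Cof}}$ by definition. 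This outsources all the hard resolution/truncation work to Lemmas \ref{lem:res} and \ref{lem:b+ac} via Theorem \ref{thm:Dcat-CDcat}, whereas you propose to redo that work in the bounded setting. Your approach is viable and more self-contained, but it is strictly more labor, and as written it is not complete.

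Two specific points. First, your faithfulness sketch contains a wrong justification: if $f:C\to D$ becomes zero in ${\bf D}^b(kG)$, it does \emph{not} follow that ${\rm Con}(f)$ is acyclic (that would say $f$ is a quasi-isomorphism, not that $f$ is null). The correct statement is that there is a quasi-isomorphism $s:X\to C$ with $fs\sim 0$, and one then needs the fraction-completion step to replace $X$ by a bounded cofibrant complex. Your ``general principle'' formulation supersedes this, so the error is recoverable, but the parenthetical argument as stated would fail. Second, the fraction-completion step --- which you correctly identify as the crux --- is only gestured at, and the gesture points at the wrong mechanism: a brutal truncation of an upper-bounded ${\tt Cof}$-resolution $\Theta(X)\to X$ does not stay inside complexes of cofibrant modules in a useful way, and the low-degree syzygies of $\Theta(X)$ are \emph{not} cofibrant for an arbitrary bounded $X$ (otherwise every object of ${\bf D}^b(kG)$ would lie in ${\bf D}^b(kG)_{f{\tt Cof}}$). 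What saves the argument is that you only need the completion for a quasi-isomorphism $s:X\to C$ with $C$ a bounded complex of cofibrants: then ${\rm Con}(\Theta(X)\to X\to C)$ is an acyclic upper-bounded complex of cofibrant modules, so (by the downward induction from the top degree, as in Lemma \ref{lem:b+ac} and the footnote to Lemma \ref{lem:tri-eq2}) all of its syzygies are cofibrant; since this cone agrees with $\Theta(X)[1]$ in degrees $\ll 0$, the syzygies of $\Theta(X)$ in those degrees are cofibrant, and the \emph{soft} truncation of $\Theta(X)$ at such a degree is the desired bounded cofibrant replacement of $X$. With that argument supplied, your proof goes through.
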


\begin{proof}
Consider the functor
$F : {\bf K}^{-,cab}({\tt Cof}(kG)) \rightarrow \mathbf{D}^{-}(kG)$,
which is trivial on
${\bf K}^{b}_{ac}({\tt Cof}(kG))$, and then induces a triangulated functor
$\overline{F}: {\bf K}^{-,cab}({\tt Cof}(kG)) /
 {\bf K}^{b}_{ac}({\tt Cof}(kG)) \rightarrow
 \mathbf{D}^{-}(kG)$.
The image of $\overline{F}$ is contained in $\mathbf{D}^{b}(kG)$;
in fact, Theorem \ref{thm:Dcat-CDcat} implies that there is an equivalence
$\overline{F} : {\bf K}^{-,cab}({\tt Cof}(kG)) /
 {\bf K}^{b}_{ac}({\tt Cof}(kG)) \rightarrow \mathbf{D}^{b}(kG)$.
 Restricting $\overline{F}$, we obtain an
equivalence
$\overline{F}: {\bf K}^{b}({\tt Cof}(kG)) /
 {\bf K}^{b}_{ac}({\tt Cof}(kG)) \rightarrow
 \mathbf{D}^{b}(kG)_{f{\tt Cof}}$,
as needed.
\end{proof}

\begin{Proposition}\label{prop:Cof-sg1}
There is a triangle-equivalence
${\bf D}^b_{{\tt Cof}.sg}(kG) \simeq
 {\bf D}^{b}(kG) / {\bf D}^{b}(kG)_{f{\tt Cof}}$.
\end{Proposition}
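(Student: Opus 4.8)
The plan is to chase the defining Verdier quotients and appeal to Lemma~\ref{lem:V} (Verdier's isomorphism for iterated quotients). Recall that by definition ${\bf D}^b_{{\tt Cof}.sg}(kG) = {\bf D}^b_{\tt Cof}(kG)/{\bf K}^b({\tt Cof}(kG))$ and that, via Theorem~\ref{thm:bCDcat}, we identify ${\bf D}^b_{\tt Cof}(kG) \simeq {\bf K}^{-,cab}({\tt Cof}(kG))$. Under this identification the subcategory ${\bf K}^b({\tt Cof}(kG))$ sits inside ${\bf K}^{-,cab}({\tt Cof}(kG))$, and it contains the strictly smaller thick subcategory ${\bf K}^b_{ac}({\tt Cof}(kG))$ of bounded \emph{acyclic} complexes of cofibrant modules. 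The idea is to form the three nested triangulated categories
\[
{\bf K}^b_{ac}({\tt Cof}(kG)) \;\subseteq\; {\bf K}^b({\tt Cof}(kG)) \;\subseteq\; {\bf K}^{-,cab}({\tt Cof}(kG))
\]
and apply Lemma~\ref{lem:V} with $\mathcal{T} = {\bf K}^{-,cab}({\tt Cof}(kG))$, $\mathcal{T}_1 = {\bf K}^b({\tt Cof}(kG))$ and $\mathcal{T}_2 = {\bf K}^b_{ac}({\tt Cof}(kG))$. This gives an isomorphism of triangulated categories
\[
\big({\bf K}^{-,cab}({\tt Cof}(kG))/{\bf K}^b_{ac}({\tt Cof}(kG))\big) \Big/ \big({\bf K}^b({\tt Cof}(kG))/{\bf K}^b_{ac}({\tt Cof}(kG))\big) \;\simeq\; {\bf K}^{-,cab}({\tt Cof}(kG))/{\bf K}^b({\tt Cof}(kG)).
\]
The right-hand side is, by definition, ${\bf D}^b_{{\tt Cof}.sg}(kG)$.

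Next I would identify the two factors on the left-hand side. By Theorem~\ref{thm:Dcat-CDcat} (the chain of equivalences proved there), the numerator ${\bf K}^{-,cab}({\tt Cof}(kG))/{\bf K}^b_{ac}({\tt Cof}(kG))$ is triangle-equivalent to ${\bf D}^b(kG)$. By Lemma~\ref{lem:tri-eq1}, the denominator ${\bf K}^b({\tt Cof}(kG))/{\bf K}^b_{ac}({\tt Cof}(kG))$ is triangle-equivalent to ${\bf D}^b(kG)_{f{\tt Cof}}$. The one point that needs a line of care is that these equivalences are \emph{compatible}: the equivalence of Lemma~\ref{lem:tri-eq1} is obtained precisely by restricting the equivalence $\overline{F}$ from Theorem~\ref{thm:Dcat-CDcat} along the inclusion ${\bf K}^b({\tt Cof}(kG)) \hookrightarrow {\bf K}^{-,cab}({\tt Cof}(kG))$, so the square
\[
\xymatrix{
{\bf K}^b({\tt Cof}(kG))/{\bf K}^b_{ac}({\tt Cof}(kG)) \ar[r] \ar[d]_{\sim} & {\bf K}^{-,cab}({\tt Cof}(kG))/{\bf K}^b_{ac}({\tt Cof}(kG)) \ar[d]^{\sim} \\
{\bf D}^b(kG)_{f{\tt Cof}} \ar[r] & {\bf D}^b(kG)
}
\]
commutes up to natural isomorphism, with horizontal arrows the evident inclusions. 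Consequently the induced functor on Verdier quotients is a triangle-equivalence
\[
\big({\bf K}^{-,cab}({\tt Cof}(kG))/{\bf K}^b_{ac}({\tt Cof}(kG))\big)\Big/\big({\bf K}^b({\tt Cof}(kG))/{\bf K}^b_{ac}({\tt Cof}(kG))\big) \;\simeq\; {\bf D}^b(kG)/{\bf D}^b(kG)_{f{\tt Cof}}.
\]
Combining this with the isomorphism from Lemma~\ref{lem:V} yields the desired triangle-equivalence
${\bf D}^b_{{\tt Cof}.sg}(kG) \simeq {\bf D}^b(kG)/{\bf D}^b(kG)_{f{\tt Cof}}$.

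The routine verifications are: that ${\bf K}^b_{ac}({\tt Cof}(kG))$ is a thick subcategory of both ${\bf K}^b({\tt Cof}(kG))$ and ${\bf K}^{-,cab}({\tt Cof}(kG))$ (immediate from Rickard's criterion, exactly as noted for ${\bf K}^*_{cac}(kG)$ in Section~2), and that a Verdier quotient functor sends a thick subcategory to a thick subcategory so that the outer quotient above is well-defined. The main obstacle I anticipate is not any deep triangulated-category fact but rather the bookkeeping of the identifications: one must make sure that the equivalence ${\bf D}^b_{\tt Cof}(kG)\simeq {\bf K}^{-,cab}({\tt Cof}(kG))$ of Theorem~\ref{thm:bCDcat} carries the subcategory ${\bf K}^b({\tt Cof}(kG))$ of ${\bf D}^b_{\tt Cof}(kG)$ (in the sense of Proposition~\ref{prop:subDcat}(ii)) onto the honest subcategory ${\bf K}^b({\tt Cof}(kG))$ of ${\bf K}^{-,cab}({\tt Cof}(kG))$, and likewise for the acyclic variant—only then is Lemma~\ref{lem:V} applicable. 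This is a matter of tracking the explicit functors defined in the proofs of Theorems~\ref{thm:bCDcat} and~\ref{thm:Dcat-CDcat}, and presents no genuine difficulty.
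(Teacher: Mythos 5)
Your proposal is correct and follows essentially the same route as the paper: the paper's proof is exactly the commutative diagram you describe, with the left vertical equivalence from Lemma \ref{lem:tri-eq1}, the middle one from Theorem \ref{thm:Dcat-CDcat}, and the identification of ${\bf K}^{-,cab}({\tt Cof}(kG))/{\bf K}^b({\tt Cof}(kG))$ as the iterated Verdier quotient via Lemma \ref{lem:V}. Your explicit remark on the compatibility of the two equivalences (that Lemma \ref{lem:tri-eq1} is obtained by restricting the equivalence $\overline{F}$ of Theorem \ref{thm:Dcat-CDcat}) is precisely the content of the paper's commutative diagram.
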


\begin{proof}
Consider the following commutative diagram of triangulated categories
\[ \xymatrix{
{\bf K}^{b}({\tt Cof}) / {\bf K}^{b}_{ac}({\tt Cof}) \ar[d]_{\simeq}
\ar@{^{(}->}[r] & {\bf K}^{-,cab}({\tt Cof}) / {\bf K}^{b}_{ac}({\tt Cof})
\ar[r] \ar[d]_{\simeq} & {\bf K}^{-,cab}({\tt Cof}) / {\bf K}^{b}({\tt Cof})
\ar@{.>}[d] \\ {\bf D}^b(kG)_{f{\tt Cof}} \ar@{^{(}->}[r] & {\bf D}^b(kG)
\ar[r] & {\bf D}^b(kG) / {\bf D}^b(kG)_{f{\tt Cof}}} \]
where the first vertical equivalence comes from Lemma \ref{lem:tri-eq1} and
the second one from Theorem \ref{thm:Dcat-CDcat}. Then, the existence of an
equivalence
\[ {\bf D}^b_{{\tt Cof}.sg}(kG) =
   {\bf K}^{-, cab}({\tt Cof}(kG)) / {\bf K}^b({\tt Cof}(kG)) \simeq
   {\bf D}^{b}(kG) / {\bf D}^{b}(kG)_{f{\tt Cof}} \]
follows readily from the commutative diagram.
\end{proof}

\vspace{0.1in}

\noindent
{\sc II.\ Comparison with the singularity category.}
Over the group algebra $kG$, there is an inclusion
$\underline{{\tt Cof}}(kG) \subseteq \underline{{\tt GProj}}(kG)$
of stable categories. Composing with the canonical functor
$\underline{{\tt GProj}}(kG)\rightarrow {\bf D}^b_{sg}(kG)$, we
obtain a triangulated functor
$F : \underline{{\tt Cof}}(kG) \rightarrow {\bf D}^b_{sg}(kG)$
which is fully faithful. Let  ${\bf K}^b({\tt Proj}(kG))$ be the
homotopy category of bounded complexes of projective $kG$-modules.
Since any projective module is cofibrant, it is clear that
${\bf K}^b({\tt Proj}(kG)) \subseteq {\bf D}^{b}(kG)_{f{\tt Cof}}$.

\begin{Lemma}\label{lem:tri-eq2}
There is a triangle-equivalence $\underline{{\tt Cof}}(kG) \simeq
{\bf D}^{b}(kG)_{f{\tt Cof}}/{\bf K}^b({\tt Proj}(kG))$.
\end{Lemma}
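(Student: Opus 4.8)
The plan is to imitate the proof of Theorem \ref{thm:bCDcat} and Lemma \ref{lem:tri-eq1}, constructing a fully faithful functor out of $\underline{{\tt Cof}}(kG)$ that lands in ${\bf D}^{b}(kG)_{f{\tt Cof}}/{\bf K}^b({\tt Proj}(kG))$, and then showing it is dense. First I would recall that $\underline{{\tt Cof}}(kG)$ is the stable category of the Frobenius exact category $({\tt Cof}(kG), \mathcal{E})$, where $\mathcal{E}$ is the class of short exact sequences of cofibrant modules with cofibrant kernel (this is a Frobenius category because ${\tt Cof}(kG)$ is closed under kernels of epimorphisms and the projective $kG$-modules are both projective and injective objects of it; the completeness of the cotorsion pair $({\tt Cof}(kG),{\tt Cof}(kG)^\perp)$ from \cite[Theorem 3.3]{ER1} guarantees enough projectives/injectives). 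So $\underline{{\tt Cof}}(kG)$ is triangulated with shift the cosyzygy. There is a standard comparison: the assignment sending a cofibrant module $M$ to the stalk complex $M$ in degree zero extends to a triangulated functor $\Phi : \underline{{\tt Cof}}(kG) \longrightarrow {\bf K}^b({\tt Cof}(kG))/{\bf K}^b({\tt Proj}(kG))$, and the latter quotient is triangle-equivalent to ${\bf D}^{b}(kG)_{f{\tt Cof}}/{\bf K}^b({\tt Proj}(kG))$ once we know ${\bf K}^b_{ac}({\tt Cof}(kG)) \subseteq {\bf K}^b({\tt Proj}(kG))$ modulo refinement --- more precisely, Lemma \ref{lem:tri-eq1} identifies ${\bf D}^b(kG)_{f{\tt Cof}}$ with ${\bf K}^b({\tt Cof}(kG))/{\bf K}^b_{ac}({\tt Cof}(kG))$, so by Lemma \ref{lem:V} (Verdier's isomorphism) we get
\[ {\bf D}^b(kG)_{f{\tt Cof}}/{\bf K}^b({\tt Proj}(kG)) \simeq {\bf K}^b({\tt Cof}(kG)) / \langle {\bf K}^b_{ac}({\tt Cof}(kG)), {\bf K}^b({\tt Proj}(kG))\rangle , \]
where the denominator is the thick subcategory generated by both. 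The key point is that this generated subcategory is exactly ${\bf K}^b_{ac}({\tt Cof}(kG))$: a bounded acyclic complex of cofibrant modules is, by the exactness theory of the Frobenius category (closure of ${\tt Cof}(kG)$ under kernels of epimorphisms, so all syzygies are cofibrant), built from the split-acyclic complexes, i.e.\ it is already projective-like in ${\bf K}^b({\tt Cof}(kG))$; equivalently ${\bf K}^b({\tt Proj}(kG)) \subseteq {\bf K}^b_{ac}({\tt Cof}(kG))$ and every bounded acyclic complex of cofibrants is homotopy equivalent to a bounded complex of projectives after suitable stabilization is not literally true, so instead I would argue directly that the quotient ${\bf K}^b({\tt Cof}(kG))/{\bf K}^b_{ac}({\tt Cof}(kG))$ is already the stable category.

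So the cleaner route, which I would actually follow: show directly that the canonical functor $\underline{{\tt Cof}}(kG) \longrightarrow {\bf K}^b({\tt Cof}(kG))/{\bf K}^b_{ac}({\tt Cof}(kG))$ is a triangle-equivalence, and then combine with Lemma \ref{lem:tri-eq1}. For this one uses the standard fact (the Frobenius-category analogue of Buchweitz's description of the singularity category, cf.\ \cite[Theorem 4.4.1]{Buc} applied to the exact category $({\tt Cof}(kG),\mathcal{E})$): for a Frobenius category $\mathcal{F}$, the stable category $\underline{\mathcal F}$ is triangle-equivalent to ${\bf K}^b(\mathcal{F})/{\bf K}^b_{ac}(\mathcal{F})$ via the stalk-complex functor, with quasi-inverse induced by taking the $0$-th syzygy (the hard direction of the equivalence being that every bounded acyclic complex of objects of $\mathcal{F}$ is null-homotopic modulo projectives, which follows because all syzygies lie in $\mathcal{F}$). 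I would then note that ${\bf K}^b({\tt Proj}(kG)) \subseteq {\bf K}^b_{ac}({\tt Cof}(kG))$ and that, under the equivalence ${\bf D}^b(kG)_{f{\tt Cof}} \simeq {\bf K}^b({\tt Cof}(kG))/{\bf K}^b_{ac}({\tt Cof}(kG))$ of Lemma \ref{lem:tri-eq1}, the subcategory ${\bf K}^b({\tt Proj}(kG))$ corresponds to the image of ${\bf K}^b({\tt Proj}(kG))$, which maps to $0$ in the stable category. Hence the further Verdier quotient by ${\bf K}^b({\tt Proj}(kG))$ does not change anything on the side of ${\bf K}^b({\tt Cof}(kG))/{\bf K}^b_{ac}({\tt Cof}(kG)) = \underline{{\tt Cof}}(kG)$, giving
\[ {\bf D}^{b}(kG)_{f{\tt Cof}}/{\bf K}^b({\tt Proj}(kG)) \simeq \underline{{\tt Cof}}(kG) . \]

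Let me lay out the steps explicitly. Step 1: Verify that $({\tt Cof}(kG), \mathcal{E})$ is a Frobenius exact category, using closure of ${\tt Cof}(kG)$ under kernels of epimorphisms and extensions, and the fact that ${\tt Proj}(kG)$ is the class of relative projective-injective objects — here I would cite \cite[Theorem 3.3]{ER1} for completeness of the cotorsion pair and the fact that cofibrant modules are closed under the relevant operations. Step 2: Recall/prove the general triangle-equivalence $\underline{\mathcal F} \simeq {\bf K}^b(\mathcal{F})/{\bf K}^b_{ac}(\mathcal{F})$ for a Frobenius category $\mathcal F$, or equivalently $\simeq {\bf K}^{-,b}(\mathcal{P})/{\bf K}^b(\mathcal{P})$ in Buchweitz's notation, and apply it with $\mathcal{F} = {\tt Cof}(kG)$, $\mathcal{P} = {\tt Proj}(kG)$. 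Step 3: Combine with Lemma \ref{lem:tri-eq1} and Lemma \ref{lem:V} to obtain the stated equivalence, checking that the composite functor is the one sending a cofibrant module to its stalk complex viewed in ${\bf D}^b(kG)_{f{\tt Cof}}/{\bf K}^b({\tt Proj}(kG))$. The main obstacle I anticipate is Step 2 — specifically the density/essential-surjectivity and the vanishing of bounded acyclic complexes modulo projectives: one must show that if $X$ is a bounded acyclic complex of cofibrant modules then $X$ becomes zero in ${\bf K}^b({\tt Cof}(kG))/{\bf K}^b({\tt Proj}(kG))$, which rests on the fact that all the syzygies ${\rm im}\,d^i_X$ are cofibrant (closure under kernels of epimorphisms) so that $X$ can be assembled from split short exact sequences with cofibrant terms, i.e.\ it is an iterated extension of contractible complexes in the exact structure on ${\bf K}^b({\tt Cof}(kG))$. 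Once this is in hand, the rest is formal manipulation of Verdier quotients.
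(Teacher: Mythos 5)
Your overall architecture (realize ${\tt Cof}(kG)$ as a Frobenius exact category and invoke a Buchweitz--Rickard--Keller--Vossieck type theorem, then splice in Lemma \ref{lem:tri-eq1}) could be made to work, but as written the central step is wrong. The ``standard fact'' you invoke is misstated: for a Frobenius category $\mathcal{F}$ with projective-injectives $\mathcal{P}$, the stable category $\underline{\mathcal{F}}$ is equivalent to ${\bf D}^b(\mathcal{F})/{\bf K}^b(\mathcal{P})$, \emph{not} to ${\bf D}^b(\mathcal{F}) = {\bf K}^b(\mathcal{F})/{\bf K}^b_{ac}(\mathcal{F})$. Concretely, your claim that the canonical functor $\underline{{\tt Cof}}(kG) \longrightarrow {\bf K}^b({\tt Cof}(kG))/{\bf K}^b_{ac}({\tt Cof}(kG))$ is an equivalence cannot hold: by Lemma \ref{lem:tri-eq1} the target is ${\bf D}^{b}(kG)_{f{\tt Cof}}$, in which the stalk complex $kG$ is a nonzero object (it is nonzero already in ${\bf D}^b(kG)$), whereas $kG=0$ in $\underline{{\tt Cof}}(kG)$. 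For the same reason the asserted inclusion ${\bf K}^b({\tt Proj}(kG)) \subseteq {\bf K}^b_{ac}({\tt Cof}(kG))$ is false (bounded complexes of projectives need not be acyclic), and the conclusion that ``the further Verdier quotient by ${\bf K}^b({\tt Proj}(kG))$ does not change anything'' is exactly backwards: that quotient is what kills the perfect complexes and produces the stable category. Your first paragraph's claim that the thick subcategory generated by ${\bf K}^b_{ac}({\tt Cof}(kG))$ and ${\bf K}^b({\tt Proj}(kG))$ equals ${\bf K}^b_{ac}({\tt Cof}(kG))$ fails for the same reason.

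The fix is to keep ${\bf K}^b({\tt Proj}(kG))$ in the denominator throughout: the correct chain is $\underline{{\tt Cof}}(kG) \simeq {\bf D}^b(\mathcal{F})/{\bf K}^b({\tt Proj}(kG)) \simeq {\bf D}^{b}(kG)_{f{\tt Cof}}/{\bf K}^b({\tt Proj}(kG))$, where the first equivalence is the Frobenius-category version of Buchweitz's theorem applied to $\mathcal{F}={\tt Cof}(kG)$ and the second uses Lemma \ref{lem:tri-eq1} together with Lemma \ref{lem:V}. If you carry that out (the hard direction being essential surjectivity and the identification of the kernel of the stalk functor with morphisms factoring through projectives), you get a proof that is genuinely different from the paper's: the paper instead restricts the already fully faithful functor $\underline{{\tt GProj}}(kG) \to {\bf D}^b_{sg}(kG)$ to $\underline{{\tt Cof}}(kG)$, observes that its image lands in ${\bf D}^{b}(kG)_{f{\tt Cof}}/{\bf K}^b({\tt Proj}(kG))$, and proves density directly: given $X$ in ${\bf D}^{b}(kG)_{f{\tt Cof}}$, it takes a projective resolution $P\to X$ in ${\bf K}^{-,b}({\tt Proj}(kG))$, checks via the mapping cone that ${\rm im}\,d_P^i$ is cofibrant for $i\ll 0$, and uses the brutal truncation triangle to identify $X$ with $F({\rm im}\,d_P^n)$ in the quotient. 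That argument avoids re-proving the Frobenius-category Buchweitz theorem at the cost of relying on fully faithfulness of $\underline{{\tt GProj}}(kG)\to{\bf D}^b_{sg}(kG)$ from \cite{Buc, Ch}.
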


\begin{proof}
Consider the fully faithful functor
$F: \underline{{\tt Cof}}(kG)\rightarrow {\bf D}^b_{sg}(kG)$
defined above. For any cofibrant module $M$, it is clear that
$F(M)\in {\bf D}^{b}(kG)_{f{\tt Cof}}/{\bf K}^b({\tt Proj}(kG))
 \subseteq {\bf D}^b_{sg}(kG)$,
so that $F$ is actually a functor from $\underline{{\tt Cof}}(kG)$
to ${\bf D}^{b}(kG)_{f{\tt Cof}}/{\bf K}^b({\tt Proj}(kG))$. We shall
prove that the image of $F$ is dense in
${\bf D}^{b}(kG)_{f{\tt Cof}}/{\bf K}^b({\tt Proj}(kG))$. To this end,
let $X\in {\bf D}^{b}(kG)_{f{\tt Cof}}$ and note that there exists a
quasi-isomorphism $P \rightarrow X$ with
$P\in {\bf K}^{-, b}({\tt Proj}(kG))$, so that $P \simeq X$ in
${\bf D}^{b}(kG)_{f{\tt Cof}}$. Then, there exists an integer $n \ll 0$
such that ${\rm H}^i(P) = 0$ and ${\rm im}d^i$ is cofibrant for all
$i \leq n$.\footnote{Indeed, the mapping cone of the quasi-isomorphism
$P\rightarrow X$ is an acyclic complex in ${\bf K}^{-}({\tt Cof}(kG))$,
which coincides with $P$ at degrees $i \ll 0$. Since the class of cofibrant
modules is closed under kernels of epimorphisms, all image modules of the
mapping cone are cofibrant.} Since $P\in {\bf K}^{-, b}({\tt Proj}(kG))$,
the left brutal truncation $P^{\geq n+1}$ is contained in
${\bf K}^{b}({\tt Proj}(kG))$. There is a morphism
$P^{\leq n}[-1] \rightarrow P^{\geq n+1}$, which consists of $d_P^n$
in degree $n+1$ and $0$'s elsewhere, whose mapping cone is precisely the
complex $P$. Then, we obtain a distinguished triangle
\[ P^{\leq n}[-1]\longrightarrow P^{\geq n+1}\longrightarrow
   P\longrightarrow P^{\leq n}, \]
which implies that
$P \simeq P^{\leq n}$ in
${\bf D}^{b}(kG)_{f{\tt Cof}} / {\bf K}^b({\tt Proj}(kG))$. It follows that
$X \simeq P \simeq P^{\leq n} \simeq F({\rm im}d^n)$ and hence the image of
$F$ is dense in ${\bf D}^{b}(kG)_{f{\tt Cof}}/{\bf K}^b({\tt Proj}(kG))$.
\end{proof}

The following result shows that the cofibrant singularity category measures,
to some extent, the obstruction to the equality between Gorenstein projective
and cofibrant modules.

\begin{Theorem}\label{thm:Cof-sg2}
(i)
${\bf D}^b_{{\tt Cof}.sg}(kG) \simeq
 {\bf D}^b_{sg}(kG) / \underline{\tt Cof}(kG)$.

(ii) The group algebra $kG$ is Gorenstein if and only if there is
a triangle-equivalence
\[ {\bf D}^b_{{\tt Cof}.sg}(kG) \simeq
   \underline{{\tt GProj}}(kG) / \underline{{\tt Cof}}(kG). \]
\end{Theorem}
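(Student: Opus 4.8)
The plan is to prove part (i) by the standard Verdier-quotient juggling established in the preceding results, and then deduce part (ii) as a direct consequence via Buchweitz's theorem (the case of finite Gorenstein global dimension). For (i), recall from Proposition \ref{prop:Cof-sg1} that ${\bf D}^b_{{\tt Cof}.sg}(kG) \simeq {\bf D}^b(kG) / {\bf D}^b(kG)_{f{\tt Cof}}$, and that the singularity category is ${\bf D}^b_{sg}(kG) = {\bf D}^b(kG)/{\bf K}^b({\tt Proj}(kG))$. Since ${\bf K}^b({\tt Proj}(kG)) \subseteq {\bf D}^b(kG)_{f{\tt Cof}}$ (both viewed inside ${\bf D}^b(kG)$), Lemma \ref{lem:V} applies with $\mathcal{T} = {\bf D}^b(kG)$, $\mathcal{T}_1 = {\bf D}^b(kG)_{f{\tt Cof}}$, and $\mathcal{T}_2 = {\bf K}^b({\tt Proj}(kG))$, yielding
\[
{\bf D}^b(kG)/{\bf D}^b(kG)_{f{\tt Cof}} \simeq
\bigl({\bf D}^b(kG)/{\bf K}^b({\tt Proj}(kG))\bigr) \big/
\bigl({\bf D}^b(kG)_{f{\tt Cof}}/{\bf K}^b({\tt Proj}(kG))\bigr)
= {\bf D}^b_{sg}(kG) / \bigl({\bf D}^b(kG)_{f{\tt Cof}}/{\bf K}^b({\tt Proj}(kG))\bigr).
\]
By Lemma \ref{lem:tri-eq2}, the denominator ${\bf D}^b(kG)_{f{\tt Cof}}/{\bf K}^b({\tt Proj}(kG))$ is triangle-equivalent to $\underline{{\tt Cof}}(kG)$, and under this equivalence it is identified with the essential image of the fully faithful functor $F : \underline{{\tt Cof}}(kG) \longrightarrow {\bf D}^b_{sg}(kG)$ (since that functor was defined precisely by composing the inclusion into $\underline{{\tt GProj}}(kG)$ with the canonical functor, and this matches the localization description). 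Combining these identifications gives ${\bf D}^b_{{\tt Cof}.sg}(kG) \simeq {\bf D}^b_{sg}(kG)/\underline{{\tt Cof}}(kG)$, which is (i).

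For part (ii), suppose first that $kG$ is Gorenstein. Then Buchweitz's theorem \cite[4.4.1]{Buc} (and its statement in \cite{Ch}) shows that the canonical functor $\underline{{\tt GProj}}(kG) \longrightarrow {\bf D}^b_{sg}(kG)$ is a triangle-equivalence. Substituting this into part (i) yields ${\bf D}^b_{{\tt Cof}.sg}(kG) \simeq \underline{{\tt GProj}}(kG)/\underline{{\tt Cof}}(kG)$, noting that the subcategory $\underline{{\tt Cof}}(kG)$ of ${\bf D}^b_{sg}(kG)$ corresponds under this equivalence to the subcategory $\underline{{\tt Cof}}(kG)$ of $\underline{{\tt GProj}}(kG)$ — this compatibility is immediate because the functor $F$ in the definition of ${\bf D}^b_{sg}(kG)$ factors through $\underline{{\tt GProj}}(kG)$. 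Conversely, if ${\bf D}^b_{{\tt Cof}.sg}(kG) \simeq \underline{{\tt GProj}}(kG)/\underline{{\tt Cof}}(kG)$, I would compare this with part (i): we then have ${\bf D}^b_{sg}(kG)/\underline{{\tt Cof}}(kG) \simeq \underline{{\tt GProj}}(kG)/\underline{{\tt Cof}}(kG)$, where the fully faithful inclusion $\underline{{\tt GProj}}(kG) \hookrightarrow {\bf D}^b_{sg}(kG)$ induces a triangulated functor between these quotients that one checks is the equivalence in question. The aim is to conclude that $\underline{{\tt GProj}}(kG) \to {\bf D}^b_{sg}(kG)$ is itself dense (hence an equivalence, being already fully faithful), which by the converse of Buchweitz's theorem \cite[Theorems 3.6 and 4.2]{BJO} forces $kG$ to be Gorenstein.

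The delicate step is the converse direction of (ii): passing from an equivalence between the two quotients back to an equivalence between the numerators. The cleanest route I foresee is to argue via the Gorenstein defect category: from part (i) and Proposition \ref{prop:Cof-sg1} one gets a localization sequence relating $\underline{{\tt Cof}}(kG)$, ${\bf D}^b_{sg}(kG)$, and ${\bf D}^b_{{\tt Cof}.sg}(kG)$, and a parallel sequence relating $\underline{{\tt Cof}}(kG)$, $\underline{{\tt GProj}}(kG)$, and the Verdier quotient $\underline{{\tt GProj}}(kG)/\underline{{\tt Cof}}(kG)$; the inclusion of $\underline{{\tt GProj}}(kG)$ into ${\bf D}^b_{sg}(kG)$ is a morphism of such sequences inducing the identity on the left terms. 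If the induced map on the right terms is an equivalence, a standard diagram chase on the Verdier quotients (using that $\underline{{\tt GProj}}(kG)$ is thick in ${\bf D}^b_{sg}(kG)$, cf.\ the remarks after \cite[Definition 4.1]{BJO}) shows the middle map is an equivalence, i.e.\ the Gorenstein defect category ${\bf D}^b_{def}(kG) = {\bf D}^b_{sg}(kG)/\underline{{\tt GProj}}(kG)$ vanishes; by \cite{BJO} this is equivalent to $kG$ being Gorenstein. I expect the bookkeeping to verify that all the subcategory identifications are genuinely compatible — in particular that the copy of $\underline{{\tt Cof}}(kG)$ sitting inside ${\bf D}^b_{sg}(kG)$ and the copy sitting inside $\underline{{\tt GProj}}(kG)$ are matched by the inclusion functor — to be the only real point requiring care; everything else is an application of Lemma \ref{lem:V}, Lemma \ref{lem:tri-eq2}, Proposition \ref{prop:Cof-sg1}, and the two directions of Buchweitz's theorem.
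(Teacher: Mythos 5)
Your proposal is correct and follows essentially the same route as the paper: part (i) via Lemma \ref{lem:V} applied to ${\bf K}^b({\tt Proj}(kG)) \subseteq {\bf D}^b(kG)_{f{\tt Cof}} \subseteq {\bf D}^b(kG)$ together with Lemma \ref{lem:tri-eq2} and Proposition \ref{prop:Cof-sg1}, and part (ii) via Buchweitz's theorem in the forward direction and, for the converse, the triviality of $({\bf D}^b_{sg}(kG)/\underline{\tt Cof}(kG))/(\underline{\tt GProj}(kG)/\underline{\tt Cof}(kG)) \simeq {\bf D}^b_{sg}(kG)/\underline{\tt GProj}(kG)$ combined with the converse of Buchweitz's theorem from \cite{BJO}. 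The only point you flag as delicate (compatibility of the two copies of $\underline{\tt Cof}(kG)$ and the fact that the given equivalence is the one induced by the inclusion) is handled in the paper exactly as you anticipate, by the commutative diagram built from Lemma \ref{lem:tri-eq2}.
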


\begin{proof}
Since ${\bf D}^b_{sg}(kG)={\bf D}^b(kG)/{\bf K}^b({\tt Proj}(kG))$,
Lemma \ref{lem:tri-eq2}
implies that
\[ {\bf D}^b_{sg}(kG) /\underline{\tt Cof}(kG)\simeq {\bf D}^b(kG) / {\bf D}^b(kG)_{f{\tt Cof}}. \]
Then, (i) follows from Proposition \ref{prop:Cof-sg1}.

For (ii), we consider the following commutative diagram of
triangulated categories
\[ \xymatrix{
\underline{{\tt Cof}}(kG) \ar[d]_{\simeq} \ar@{^{(}->}[r]
& \underline{{\tt GProj}}(kG) \ar[r] \ar[d]_{}
& \underline{{\tt GProj}}(kG)/\underline{{\tt Cof}}(kG) \ar@{.>}[d]\\
{\bf D}^b(kG)_{f{\tt Cof}}/{\bf K}^b({\tt Proj}(kG)) \ar@{^{(}->}[r]
&{\bf D}^b(kG)/{\bf K}^b({\tt Proj}(kG))\ar[r] &
{\bf D}^b(kG) / {\bf D}^b(kG)_{f{\tt Cof}}} \]
where the first vertical equivalence holds by Lemma \ref{lem:tri-eq2}.
If the group algebra $kG$ is Gorenstein, then the second vertical functor
$\underline{{\tt GProj}}(kG) \rightarrow
 {\bf D}^b(kG)/{\bf K}^b({\tt Proj}) = {\bf D}^b_{sg}(kG)$
is an equivalence as well. Invoking Proposition \ref{prop:Cof-sg1},
we then obtain an equivalence
\[ {\bf D}^b_{{\tt Cof}.sg}(kG) \simeq
   {\bf D}^b(kG) / {\bf D}^b(kG)_{f{\tt Cof}} \simeq
   \underline{{\tt GProj}}(kG) / \underline{{\tt Cof}}(kG). \]
Conversely, if
${\bf D}^b_{sg}(kG) /\underline{\tt Cof}(kG) \simeq
 \underline{\tt GProj}(kG)/\underline{\tt Cof}(kG)$,
the triviality of the Verdier quotient
\[ ({\bf D}^b_{sg}(kG) / \underline{\tt Cof}(kG)) /
   (\underline{\tt GProj}(kG) / \underline{\tt Cof}(kG)) \simeq
   {\bf D}^b_{sg}(kG) / \underline{\tt GProj}(kG) \]
implies that ${\bf D}^b_{sg}(kG) \simeq \underline{{\tt GProj}}(kG)$,
so that $kG$ is Gorenstein.
\end{proof}

\noindent
Recall that the fully faithful functor $F$ from the stable category of
Gorenstein projective modules to ${\bf D}^b_{sg}(R)$ is an equivalence
if the ring $R$ has finite Gorenstein global
dimension; this result is referred to as Buchweitz's theorem
\cite[4.4.1]{Buc}. By applying the notion of the Gorenstein defect
category \cite[Definition 4.1]{BJO}, which is defined as the Verdier
quotient ${\bf D}^b_{sg}(R)/{\rm Im}F$, it follows from
\cite[Theorems 3.6 and 4.2]{BJO} that the converse of Buchweitz's
theorem is also true, so that the functor $F$ is an equivalence if
and only if $R$ is Gorenstein; see also \cite[\S 8.5]{Z}. Note that
the functor
$F : \underline{{\tt Cof}}(kG) \rightarrow {\bf D}^b_{sg}(kG)$,
obtained as the composition
\[ \underline{{\tt Cof}}(kG) \longrightarrow
   \underline{{\tt GProj}}(kG) \longrightarrow {\bf D}^b_{sg}(kG) , \]
is fully faithful. Regarding the density of the latter functor, we
have the following result. We note that a similar argument provides
a direct and concise proof for the converse of Buchweitz's theorem,
regarding Gorenstein projective modules; as far as we know, this
argument has not appeared before in the literature.

\begin{Proposition}\label{prop:dense}
If the functor
$F : \underline{{\tt Cof}}(kG) \rightarrow {\bf D}^b_{sg}(kG)$
is dense, then every $kG$-module has finite cofibrant dimension.
\end{Proposition}

\begin{proof}
Let $M$ be a $kG$-module. Since the functor $F$ is dense, there exists
a cofibrant module $M'$, such that $M=F(M')=M'$ in ${\bf D}^b_{sg}(kG)$.
We consider projective resolutions $P \rightarrow M$ and
$P' \rightarrow M'$ of $M$ and $M'$ respectively and note that
${\rm ker}d_{P'}^i$ is cofibrant for all $i \leq 0$. In the singularity
category ${\bf D}^b_{sg}(kG)$, we have $P = P'$. Since
${\bf D}^b(kG) \simeq {\bf K}^{-,b}({\tt Proj}(kG))$, that equality can
be represented by a right fraction
$P \stackrel{t}\Longleftarrow L\stackrel{t'}\longrightarrow P'$, where
$L$ is in ${\bf K}^{-,b}({\tt Proj}(kG))$, and both ${\rm Con}(t)$ and
${\rm Con}(t')$ are in ${\bf K}^{b}({\tt Proj}(kG))$. Since the class
of cofibrant modules contains all projective modules and is closed under
direct sums and direct summands, the following auxiliary result implies
that ${\rm ker}d_P^i$ is cofibrant for $i \ll 0$; hence, $M$ has finite
cofibrant dimension.
\end{proof}

\begin{Lemma}
Let $L,P$ be two complexes in ${\bf K}^{-,b}({\tt Proj}(kG))$ and
assume that $t : L \rightarrow P$ is a morphism whose mapping
cone is quasi-isomorphic with a complex in ${\bf K}^{b}({\tt Proj}(kG))$.
Then, the kernels ${\rm ker}d_L^i$ and ${\rm ker}d_P^i$ are stably
isomorphic for $i \ll 0$.
\end{Lemma}

\begin{proof}
We note that the mapping cone $K = {\rm Con}(t)$ is contained in
${\bf K}^{-,b}({\tt Proj}(kG))$. By our assumption, there is a
quasi-isomorphism between $K$ and a complex in
${\bf K}^{b}({\tt Proj}(kG))$. Let $C$ be the mapping cone of the
latter quasi-isomorphism. Then, $C$ is an acyclic complex in
${\bf K}^{-}({\tt Proj}(kG))$; as such, $C$ is contractible. Since
$C$ agrees with $K$ in degrees $\ll 0$, we conclude that the kernels
${\rm ker}d_K^i$ are projective for $i \ll 0$. Since $L$ is acyclic
in degrees $\ll 0$, the short exact sequence of complexes
\[ 0 \longrightarrow P \longrightarrow K \longrightarrow L[1]
     \longrightarrow 0 \]
induces short exact sequences of modules
\[ 0 \longrightarrow {\rm ker}d_P^i \longrightarrow {\rm ker}d_K^i
     \longrightarrow {\rm ker}d_L^{i+1} \longrightarrow 0 \]
for all $i \ll 0$. On the other hand, there are also short exact
sequences of modules
\[ 0 \longrightarrow {\rm ker}d_L^i \longrightarrow L^i
     \longrightarrow {\rm ker}d_L^{i+1} \longrightarrow 0 \]
for all $i \ll 0$. Then, the result follows from Schanuel's lemma.
\end{proof}

\begin{Theorem}\label{thm:Cof-sg=0}
Consider the following conditions:

(i) Every $kG$-module has finite cofibrant dimension.

(ii) ${\bf D}^b_{{\tt Cof}.sg}(kG) = 0$.

(iii) The canonical functor
$\underline{{\tt Cof}}(kG) \rightarrow {\bf D}^b_{sg}(kG)$
is an equivalence.

(iv) The canonical functor
${\bf K}^{b}({\tt Cof}(kG)) \rightarrow {\bf D}^{b}_{\tt Cof}(kG)$
is an equivalence.

(v) $\underline{{\tt Cof}}(kG) \simeq \underline{{\tt GProj}}(kG)$.\\
\noindent Then
(i)$\leftrightarrow$(ii)$\leftrightarrow$(iii)$\leftrightarrow$(iv)$\rightarrow$(v);
all conditions are equivalent if $kG$ is Gorenstein.
\end{Theorem}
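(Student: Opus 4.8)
The plan is to establish the four equivalences (i)$\leftrightarrow$(ii)$\leftrightarrow$(iii)$\leftrightarrow$(iv) first, and then treat the implication to (v) and the reverse implication under the Gorenstein hypothesis. The equivalence (ii)$\leftrightarrow$(iii) should be essentially immediate from Theorem \ref{thm:Cof-sg2}(i): since ${\bf D}^b_{{\tt Cof}.sg}(kG) \simeq {\bf D}^b_{sg}(kG)/\underline{\tt Cof}(kG)$ and the functor $\underline{\tt Cof}(kG) \to {\bf D}^b_{sg}(kG)$ is fully faithful, the Verdier quotient is trivial exactly when that functor is dense, i.e.\ an equivalence. For (ii)$\leftrightarrow$(iv), I would use the definition ${\bf D}^b_{{\tt Cof}.sg}(kG) = {\bf D}^b_{\tt Cof}(kG)/{\bf K}^b({\tt Cof}(kG))$ together with the fact (from Proposition \ref{prop:subDcat}(ii)) that the canonical functor ${\bf K}^b({\tt Cof}(kG)) \to {\bf D}^b_{\tt Cof}(kG)$ is fully faithful; hence it is an equivalence precisely when it is dense, which is precisely the triviality of the quotient.

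For (i)$\leftrightarrow$(iii), I would argue as follows. The implication (iii)$\rightarrow$(i) is exactly Proposition \ref{prop:dense}. Conversely, assuming (i), every $kG$-module $M$ has a finite cofibrant resolution, so in ${\bf D}^b_{sg}(kG)$ the object $M$ is isomorphic to $\Omega^{-d}$ of a cofibrant module (the $d$-th cofibrant cosyzygy), where $d = {\rm cd}_{kG} M$; using Lemma \ref{lem:bound-cd}, $kG$ is Gorenstein and ${\tt Cof}(kG) = {\tt GProj}(kG)$, so $\underline{\tt Cof}(kG) = \underline{\tt GProj}(kG)$, and by Buchweitz's theorem for the Gorenstein ring $kG$ the latter is equivalent to ${\bf D}^b_{sg}(kG)$; chasing the identifications shows the composite $\underline{\tt Cof}(kG) \to {\bf D}^b_{sg}(kG)$ is that same equivalence. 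Thus (i) implies (iii). This closes the cycle of the first four conditions.

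For (iv)$\rightarrow$(v) — or equivalently any of (i)--(iii) implying (v) — I would deduce from (i) via Lemma \ref{lem:bound-cd} that ${\tt Cof}(kG) = {\tt GProj}(kG)$ as classes of modules, whence $\underline{\tt Cof}(kG) = \underline{\tt GProj}(kG)$ on the nose, giving (v). Finally, under the additional hypothesis that $kG$ is Gorenstein, I would prove (v)$\rightarrow$(i): if $\underline{\tt Cof}(kG) \simeq \underline{\tt GProj}(kG)$, then combining with Theorem \ref{thm:Cof-sg2}(ii) (which applies since $kG$ is Gorenstein) we get ${\bf D}^b_{{\tt Cof}.sg}(kG) \simeq \underline{\tt GProj}(kG)/\underline{\tt Cof}(kG) = 0$, which is (ii), and we have already shown (ii)$\rightarrow$(i). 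The main obstacle I anticipate is making the equivalence claimed in (iii) genuinely the \emph{canonical} functor rather than merely an abstract equivalence: one must verify that the composite $\underline{\tt Cof}(kG) \hookrightarrow \underline{\tt GProj}(kG) \to {\bf D}^b_{sg}(kG)$ coincides with the equivalence produced by Buchweitz's theorem, which requires tracking that both send a cofibrant module to its stalk complex and are compatible with the shift — a diagram chase through the identifications in Theorem \ref{thm:Cof-sg2}, but one that should be routine given the explicit descriptions already in place.
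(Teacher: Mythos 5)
Your proposal is correct in outline and reaches all the stated implications, but it routes several of them differently from the paper. The paper proves (i)$\leftrightarrow$(ii) directly from Proposition \ref{prop:Cof-sg1}, observing that (i) holds iff ${\bf D}^{b}(kG) = {\bf D}^{b}(kG)_{f{\tt Cof}}$; it proves (ii)$\rightarrow$(iii) via Lemma \ref{lem:tri-eq2} rather than via Theorem \ref{thm:Cof-sg2}(i); and, most notably, it handles (iv) by two explicit resolution arguments, (i)$\rightarrow$(iv) using the construction of Lemma \ref{lem:res} (which produces a \emph{bounded} cofibrant resolution under hypothesis (i)) and (iv)$\rightarrow$(i) by truncating a special cofibrant resolution. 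Your route instead closes the cycle as (i)$\rightarrow$(iii)$\rightarrow$(ii)$\rightarrow$(iv), using Buchweitz's theorem for (i)$\rightarrow$(iii) (the paper only invokes Buchweitz in Theorem \ref{thm:Cof-sg2}(ii)) and the formal principle ``fully faithful plus trivial quotient implies equivalence'' for (ii)$\leftrightarrow$(iii) and (ii)$\leftrightarrow$(iv). Your worry about identifying the abstract equivalence with the canonical functor in (iii) is actually a non-issue: once Lemma \ref{lem:bound-cd} gives ${\tt Cof}(kG) = {\tt GProj}(kG)$ as classes, the two canonical functors are literally the same functor. The paper's treatment of (iv) is more self-contained and yields the sharper constructive information; yours is shorter but leans on the formal machinery.

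The one step you should shore up is the direction ``quotient trivial $\Rightarrow$ functor dense'' used in (ii)$\rightarrow$(iii) and (ii)$\rightarrow$(iv). Triviality of a Verdier quotient $\mathcal{T}/\mathcal{K}$ only says that every object of $\mathcal{T}$ is a direct summand of an object of $\mathcal{K}$; to conclude $\mathcal{K} = \mathcal{T}$ you need the relevant subcategory to be thick. For the image of $\underline{{\tt Cof}}(kG)$ in ${\bf D}^b_{sg}(kG)$ this is in line with the paper's own conventions (it treats ${\rm Im}\,F$ as thick), but for the image of ${\bf K}^{b}({\tt Cof}(kG))$ in ${\bf D}^{b}_{\tt Cof}(kG)$ you must check that a direct summand of a bounded complex of cofibrant modules is again isomorphic to one; this follows from idempotent completeness of ${\bf K}^{b}({\tt Cof}(kG))$ (B\"okstedt--Neeman, using closure of ${\tt Cof}(kG)$ under countable direct sums) together with the full faithfulness from Proposition \ref{prop:subDcat}(ii), but it is not automatic and should be said. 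The paper avoids this point entirely by arguing (i)$\leftrightarrow$(iv) with resolutions.
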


\begin{proof}
All $kG$-modules have finite cofibrant dimension if and only if
${\bf D}^{b}(kG) = {\bf D}^{b}(kG)_{f{\tt Cof}}$. Invoking the equivalence
${\bf D}^b_{{\tt Cof}.sg}(kG)\simeq{\bf D}^{b}(kG)/{\bf D}^{b}(kG)_{f{\tt Cof}}$
in Proposition \ref{prop:Cof-sg1}, we conclude that (i)$\leftrightarrow$(ii).

(ii)$\rightarrow$(iii): Combining assertion (ii) with Proposition
\ref{prop:Cof-sg1}, we conclude that
${\bf D}^b(kG) = {\bf D}^b(kG)_{f{\tt Cof}}$. Then, Lemma \ref{lem:tri-eq2}
implies that
\[ \underline{{\tt Cof}}(kG) \simeq
   {\bf D}^{b}(kG)_{f{\tt Cof}} / {\bf K}^b({\tt Proj}(kG)) =
   {\bf D}^{b}(kG) / {\bf K}^b({\tt Proj}(kG)) =
   {\bf D}^b_{sg}(kG).\]

(iii)$\rightarrow$(i): This is proved in Proposition \ref{prop:dense}.

(i)$\rightarrow$(iv): Let $F$ be the composition of the inclusion
${\bf K}^{b}({\tt Cof}(kG)) \rightarrow {\bf K}^{b}(kG)$ and
the canonical localization functor
$Q : {\bf K}^{b}(kG) \rightarrow {\bf D}^{b}_{\tt Cof}(kG)$.
Lemma \ref{lem:iso1} implies that $F$ is fully faithful. Invoking
condition (i) on the finiteness of the cofibrant dimension, the
construction in Lemma \ref{lem:res} implies that the image of $F$
is dense. Hence, $F$ is an equivalence.

(iv)$\rightarrow$(i): Let $M$ be a $kG$-module and consider a cofibrant
resolution $C$ of $M$, obtained by taking successive special
${\tt Cof}(kG)$-precovers; then, $M=C$ in
${\bf D}^{b}_{\tt Cof}(kG) \simeq {\bf K}^{-,cab}({\tt Cof}(kG))$. In
view of assertion (iv), $C$ is homotopy equivalent to a complex
$C' \in {\bf K}^{b}({\tt Cof}(kG))$. The cone of a quasi-isomorphism
between $C$ and $C'$ is an acyclic complex of cofibrant modules in
${\bf K}^{-}({\tt Cof}(kG))$, which coincides with $C$ in degrees $\ll 0$.
Since the class of cofibrant modules is closed under kernels of epimorphisms,
all kernels of that cone are cofibrant. It follows that the kernels of $C$
are cofibrant in degrees $\ll 0$ and hence $M$ has finite cofibrant dimension.

(i)$\rightarrow$(v): In view of Lemma \ref{lem:bound-cd}, assertion
(i) implies that ${\tt Cof}(kG) = {\tt GProj}(kG)$, so that
$\underline{{\tt Cof}}(kG) = \underline{{\tt GProj}}(kG)$.

If we assume that $kG$ is Gorenstein and assertion (v) holds, then Theorem
\ref{thm:Cof-sg2}(ii) implies that ${\bf D}^b_{{\tt Cof}.sg}(kG) = 0$; this
proves that (v)$\rightarrow$(ii).
\end{proof}

\begin{Example}\label{eg:2}
(i) If $G$ is a finite group, then the group algebra $\mathbb{Z}G$
satisfies the equivalent conditions of Theorem 3.8. Indeed, in this
case, a $\mathbb{Z}G$-module is Gorenstein projective (cofibrant) if
and only if it is $\mathbb{Z}$-free. More generally, if $G$ is a group
of type $FP_\infty$ contained in the class
${\scriptstyle{{\bf LH}}}\mathfrak{F}$ of hierarchically decomposable
groups that were introduced by Kropholler in \cite{Kro}, then
$\mathbb{Z}G$ satisfies the above equivalent conditions as well; cf.
\cite{DT, ET2}.

(ii) Let $G$ be a finite group and $p$ a prime number. We consider
the local ring $k = \mathbb{Z}/(p^2)$; $k$ is self-injective and
the group algebra $kG$ is Gorenstein. Invoking Example \ref{eg:1}
and Theorem \ref{thm:Cof-sg=0}, it follows that the cofibrant singularity category
of $kG$ is not trivial.
\end{Example}

\section{Stable categories}

\noindent
In this section, we intend to study the relations between various stable
categories over the group algebra $kG$.
First, we need to recall the notion of cotorsion pairs.
Let ${\mathcal E}$ be an additive full and extension-closed
subcategory of the category of modules over a ring $R$; then,
$\mathcal{E}$ is an exact category in the sense of Quillen
\cite{Q}. The ${\rm Ext}^1$-pairing induces an orthogonality
relation between subclasses of ${\mathcal E}$.
Let ${\mathcal C},{\mathcal D}$ be two subclasses of ${\mathcal E}$.
Then, the pair $({\mathcal C},{\mathcal D})$ is a cotorsion pair in
${\mathcal E}$ (cf.\ \cite[Definition 7.1.2]{EJ}) if
${\mathcal C} = {^{\perp} {\mathcal D}}$ and
${\mathcal C} ^{\perp} = {\mathcal D}$. The cotorsion pair is
called hereditary if ${\rm Ext}^i_R(C,D)=0$ for all $i>0$ and
all modules $C \in \mathcal{C}$ and $D \in \mathcal{D}$. If
$\mathcal{E}$ has enough projective (resp.\ injective) objects,
the latter condition is equivalent to the assertion that
$\mathcal{C}$ (resp.\ $\mathcal{D}$) is closed under kernels
of epimorphisms (resp.\ under cokernels of monomorphisms). We
say that the cotorsion pair is complete if for any
$E \in {\mathcal E}$ there exist two short exact sequences of
modules
\[ 0 \longrightarrow D \longrightarrow C \longrightarrow E
     \longrightarrow 0
   \;\;\; \mbox{and } \;\;\;
   0 \longrightarrow E \longrightarrow D' \longrightarrow C'
     \longrightarrow 0 , \]
where $C,C' \in {\mathcal C}$ and $D,D' \in {\mathcal D}$.

\vspace{0.1in}

\noindent
{\sc I.\ A localization sequence.} Let $\mathcal{E}$ be a full and
extension-closed subcategory of the category of modules over a ring
$R$; then, $\mathcal{E}$ is an exact category in the sense of \cite{Q}.
We place ourselves in the following general setting: Let
$(\mathcal{A}, \mathcal{B})$ be a complete hereditary cotorsion pair
in $\mathcal{E}$. We consider a subclass $\mathcal{C}$ of $\mathcal{E}$,
which contains $\mathcal{A}$ and is closed under extensions and kernels
of epimorphisms, and a subclass $\mathcal{D}$ of $\mathcal{E}$, which
contains $\mathcal{B}$ and is closed under extensions and cokernels of
monomorphisms. We also assume that $\mathcal{A}\cap \mathcal{D}$,
$\mathcal{C}\cap \mathcal{D}$ and $\mathcal{C}\cap \mathcal{B}$ are
Frobenius categories with the same projective-injective objects, which
are precisely the modules in the kernel $\mathcal{A}\cap\mathcal{B}$
of the cotorsion pair $(\mathcal{A}, \mathcal{B})$. The stable categories
of the Frobenius categories $\mathcal{A} \cap \mathcal{D}$,
$\mathcal{C} \cap \mathcal{D}$ and $\mathcal{C}\cap \mathcal{B}$ are
denoted by $\underline{\mathcal{A}\cap \mathcal{D}}$,
$\underline{\mathcal{C}\cap \mathcal{D}}$ and
$\underline{\mathcal{C}\cap \mathcal{B}}$ respectively. The latter
categories are canonically triangulated categories;
cf.\ \cite[Theorem 2.6]{Ha}.

For any module $M \in \mathcal{C}\cap \mathcal{D}$ we consider a
short exact sequence in $\mathcal{E}$
\begin{equation}
 0 \longrightarrow B \longrightarrow A
   \stackrel{p}{\longrightarrow} M \longrightarrow 0,
\end{equation}
where $A\in \mathcal{A}$ and $B \in\mathcal{B}$. Our assumptions on
$\mathcal{C}$ and $\mathcal{D}$ imply that
$A \in \mathcal{A} \cap \mathcal{D}$ and
$B \in \mathcal{C} \cap \mathcal{B}$.

\begin{Lemma}\label{lem:maps}
Let $f : M \rightarrow M'$ be a morphism in $\mathcal{E}$,
where $M$ and $M'$ are objects in $\mathcal{C}\cap \mathcal{D}$.
We also consider two short exact sequences in $\mathcal{E}$:
\[ 0 \longrightarrow B \longrightarrow A
     \stackrel{p}{\longrightarrow} M \longrightarrow 0
   \;\;\; \mbox{and} \;\;\;
   0 \longrightarrow B' \stackrel{\iota'}{\longrightarrow} A'
     \stackrel{p'}{\longrightarrow} M' \longrightarrow 0 , \]
where $A,A' \in \mathcal{A}\cap \mathcal{D}$ and $B,B' \in
\mathcal{C}\cap \mathcal{B}$. Then:

(i) There exists a map $g : A \rightarrow A'$,
such that $p'g = fp$.

(ii) If $g,g' : A \rightarrow A'$ are two maps with
$p'g = fp = p'g'$, then
$[g] = [g'] \in \underline{\rm{Hom}}_R(A, A')$.

(iii) If $[f] = [0] \in \underline{\rm{Hom}}_R(M, M')$
and $g : A \rightarrow A'$ is a map with
$p'g=fp$, then $[g]=[0] \in \underline{\rm{Hom}}_R(A,A')$.
\end{Lemma}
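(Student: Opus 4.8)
The plan is to exploit the defining orthogonality of the cotorsion pair $(\mathcal{A},\mathcal{B})$ together with the hereditary hypothesis, which guarantees that $\mathrm{Ext}^i_R(A'',B')=0$ for all $i>0$ whenever $A''\in\mathcal{A}$ and $B'\in\mathcal{B}$. For part (i), I would first form the composite $fp : A \longrightarrow M'$ and lift it along the epimorphism $p' : A' \longrightarrow M'$. The obstruction to such a lift lies in $\mathrm{Ext}^1_R(A,B')$, which vanishes because $A\in\mathcal{A}$ and $B'\in\mathcal{B}$; concretely, applying $\mathrm{Hom}_R(A,-)$ to the short exact sequence $0 \to B' \to A' \to M' \to 0$ yields an exact sequence whose connecting map lands in $\mathrm{Ext}^1_R(A,B') = 0$, so $p'_* : \mathrm{Hom}_R(A,A') \to \mathrm{Hom}_R(A,M')$ is surjective and $fp$ lifts to the desired $g$.

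For part (ii), given $g,g'$ with $p'g = p'g'$, the difference $h = g - g'$ satisfies $p'h = 0$, so $h$ factors as $h = \iota' k$ for a (unique) map $k : A \longrightarrow B'$. Now $A \in \mathcal{A}\cap\mathcal{D}$ and $B' \in \mathcal{C}\cap\mathcal{B}$, and I want to show $k$ factors through a projective-injective object, i.e.\ through an object of $\mathcal{A}\cap\mathcal{B}$. Here I would use that $\mathcal{C}\cap\mathcal{B}$ is a Frobenius category whose injective objects are the modules in $\mathcal{A}\cap\mathcal{B}$: pick an admissible monomorphism $B' \hookrightarrow I$ with $I \in \mathcal{A}\cap\mathcal{B}$ and cokernel again in $\mathcal{C}\cap\mathcal{B}$; then the composite $A \xrightarrow{k} B' \hookrightarrow I$ extends over the admissible monomorphism $B' \hookrightarrow A'$ precisely because $\mathrm{Ext}^1_R(A'/B',I) = \mathrm{Ext}^1_R(M',I) = 0$ (as $M'\in\mathcal{C}$, but one really needs $I\in\mathcal{B}$ and the relevant $\mathrm{Ext}$ over the exact structure to vanish — this is where the hereditary cotorsion pair and the Frobenius hypothesis combine). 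Thus $\iota'k = g-g'$ factors through the projective-injective $A'$-direction, giving $[g]=[g']$ in $\underline{\mathrm{Hom}}_R(A,A')$. I expect this factorization argument to be the main obstacle: one must be careful about which exact structure the $\mathrm{Ext}^1$ is computed in and verify that the maps stay inside the relevant subcategories $\mathcal{A}\cap\mathcal{D}$, $\mathcal{C}\cap\mathcal{D}$, $\mathcal{C}\cap\mathcal{B}$ at each stage.

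For part (iii), suppose $[f] = [0]$, so $f = vu$ factors through a projective-injective object $Q \in \mathcal{A}\cap\mathcal{B}$, say $M \xrightarrow{u} Q \xrightarrow{v} M'$. Since $Q \in \mathcal{A}$ is projective in the exact category, the map $v : Q \to M'$ lifts along $p' : A' \to M'$ to some $\tilde{v} : Q \to A'$ with $p'\tilde{v} = v$. Then $\tilde{v}u p : A \to A'$ satisfies $p'(\tilde{v}up) = v u p = f p$, so by part (ii) we have $[g] = [\tilde{v}up]$ in $\underline{\mathrm{Hom}}_R(A,A')$; but $\tilde{v}up$ factors through the projective-injective object $Q$, hence $[\tilde{v}up] = [0]$, and therefore $[g] = [0]$ as claimed. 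The one subtlety is ensuring that the chosen $Q$ through which $f$ factors can be taken in $\mathcal{A}\cap\mathcal{B}$ — this is exactly the hypothesis that the projective-injective objects of the Frobenius category $\mathcal{C}\cap\mathcal{D}$ are precisely the modules in $\mathcal{A}\cap\mathcal{B}$ — and that the lift $\tilde v$ exists, which again uses the projectivity of $Q$ in the exact structure together with $\mathrm{Ext}^1_R(Q, B') = 0$.
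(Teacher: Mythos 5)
Your parts (i) and (iii) are correct and coincide with the paper's argument: (i) lifts $fp$ along $p'$ using $\mathrm{Ext}^1_R(A,B')=0$, and (iii) lifts $v:Q\to M'$ to $\tilde v:Q\to A'$ via $\mathrm{Ext}^1_R(Q,B')=0$ and then invokes (ii). The problem is part (ii), on which (iii) depends, and there your argument has a genuine gap. After writing $g'-g=\iota'k$ with $k:A\to B'$, you embed $B'$ into a projective-injective $I\in\mathcal{A}\cap\mathcal{B}$ and extend the map $B'\to I$ along $\iota':B'\to A'$ to some $\tilde e:A'\to I$ (using $\mathrm{Ext}^1_R(M',I)=0$). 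But this extension points the wrong way: it gives $\tilde e\iota'=e$, i.e.\ it factors $e$ through $A'$, and from it you cannot conclude that $\iota'k=g'-g$ factors through a projective-injective object. To make an injective-style argument close up you would instead need a map $I\to A'$ restricting to $\iota'$ on $B'$, whose obstruction lies in $\mathrm{Ext}^1_R(\mathrm{coker}(B'\hookrightarrow I),A')$ with $\mathrm{coker}\in\mathcal{C}\cap\mathcal{B}$ and $A'\in\mathcal{A}\cap\mathcal{D}$ --- a group that the hypotheses do not force to vanish. The phrase ``factors through the projective-injective $A'$-direction'' is a non sequitur, and you yourself flag the step as the main obstacle without resolving it.

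The fix is to apply the orthogonality on the other side, which is what the paper does: choose a surjection $\pi:P\to B'$ with $P$ projective-injective in $\mathcal{C}\cap\mathcal{B}$ and $\ker\pi\in\mathcal{C}\cap\mathcal{B}\subseteq\mathcal{B}$; since $A\in\mathcal{A}$, $\mathrm{Ext}^1_R(A,\ker\pi)=0$, so $k$ lifts through $\pi$ and hence $k$ (and therefore $g'-g=\iota'k$) factors through the projective-injective $P$. An equally valid dual variant would be to take an admissible monomorphism $A\hookrightarrow I_A$ with $I_A$ projective-injective and cokernel in $\mathcal{A}\cap\mathcal{D}\subseteq\mathcal{A}$, and extend $k$ over it using $\mathrm{Ext}^1_R(\mathrm{coker},B')=0$ from the cotorsion pair --- note that this embeds the \emph{source} $A$ into an injective, not the target $B'$ as you do. Either repair restores (ii), and with it your (iii) goes through unchanged.
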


\begin{proof}
(i) The additive map
$p'_* : \mbox{Hom}_R(A,A') \rightarrow \mbox{Hom}_R(A,M')$
is surjective, since the abelian group $\mbox{Ext}^1_R(A,B')$ is
trivial. Therefore, there exists a map $g : A \rightarrow A'$
such that $fp = p'_*(g) = p'g$, as needed.

(ii) Let $g,g' : A \rightarrow A'$ be two maps with
$p'g = fp = p'g'$.
\[
\begin{array}{ccccccccc}
 0 & \longrightarrow & B & \longrightarrow & A
   & \stackrel{p}{\longrightarrow} & M & \longrightarrow & 0 \\
 & & & & {\scriptstyle{g}} \downarrow \downarrow {\scriptstyle{g'}}
 & & \!\!\! {\scriptstyle{f}} \downarrow & & \\
 0 & \longrightarrow & B' & \stackrel{\imath'}{\longrightarrow}
   & A' & \stackrel{p'}{\longrightarrow} & M' & \longrightarrow & 0
\end{array}
\]
Then, $p'(g'-g) = p'g' - p'g = 0$ and hence there exists a map
$h : A \rightarrow B'$ with $g'-g = \iota'h$. We fix a surjective
map $\pi: P\rightarrow B'$, where $P$ is a projective-injective
object of $\mathcal{C} \cap \mathcal{B}$, and note that
${\rm ker}\pi\in \mathcal{C} \cap \mathcal{B} \subseteq \mathcal{B}$.
Then, $\mbox{Ext}^1_R(A, {\rm ker}\pi)=0$ and hence $h$ factors through
$\pi$. This is also the case for $g'-g = \iota'h$ and hence
$[g] = [g'] \in \underline{\mbox{Hom}}_R(A,A')$.

(iii) Assume that $f$ factors as the composition of two maps
$M \stackrel{a}{\rightarrow} Q \stackrel{b}{\rightarrow} M'$,
where $Q$ is a projective-injective object of $\mathcal{C}\cap \mathcal{D}$.
If $\beta : Q \rightarrow A'$ is a map with $p'\beta = b$, then
the composition $\beta ap: A \rightarrow A'$ is such that
$p'(\beta ap) = (p' \beta)ap = bap = fp$. It follows from (ii) above that
$[g] = [\beta ap] \in \underline{\mbox{Hom}}_R(A,A')$. This finishes
the proof, since we obviously have
$[\beta ap] = [0] \in \underline{\mbox{Hom}}_R(A,A')$.
\end{proof}

\noindent
Lemma \ref{lem:maps} implies that for any $M\in \mathcal{C}\cap \mathcal{D}$,
the module $A\in \mathcal{A}\cap \mathcal{D}$ that appears in the short exact
sequence (1) is uniquely determined by $M$, up to a canonical isomorphism in
the stable category $\underline{\mathcal{A}\cap \mathcal{D}}$. Moreover, Lemma
\ref{lem:maps}(iii) implies that the assignment $M \mapsto A$ factors through
the stable category $\underline{\mathcal{C}\cap \mathcal{D}}$ and defines a
functor
\[ i^! : \underline{\mathcal{C}\cap \mathcal{D}} \longrightarrow
         \underline{\mathcal{A}\cap \mathcal{D}}, \]
which is clearly additive.

\begin{Proposition}
The additive functor
$i^! : \underline{\mathcal{C} \cap \mathcal{D}} \longrightarrow
       \underline{\mathcal{A}\cap \mathcal{D}}$
defined above is right adjoint to the inclusion functor
$i_* : \underline{\mathcal{A} \cap \mathcal{D}} \longrightarrow
       \underline{\mathcal{C}\cap \mathcal{D}}$
and hence it is triangulated. In addition, the composition
$i^! \circ i_*$ is the identity on
$\underline{\mathcal{A} \cap \mathcal{D}}$.
\end{Proposition}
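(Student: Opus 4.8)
The plan is to verify the adjunction directly by constructing the natural bijection
\[
\underline{\mathrm{Hom}}_R(i_*N, M) \;\cong\; \underline{\mathrm{Hom}}_R(N, i^!M)
\]
for $N \in \mathcal{A}\cap\mathcal{D}$ and $M \in \mathcal{C}\cap\mathcal{D}$, and then to deduce the remaining assertions formally. Fix a short exact sequence $0 \to B \to A \xrightarrow{p} M \to 0$ as in (1), so that by definition $i^!M = A$ in $\underline{\mathcal{A}\cap\mathcal{D}}$. First I would define a map $\underline{\mathrm{Hom}}_R(N, A) \to \underline{\mathrm{Hom}}_R(N, M)$ by post-composition with $p$; this is well defined on stable categories because $p$ has a kernel $B \in \mathcal{B}$, so any map $N \to A$ that factors through a projective-injective object of $\mathcal{A}\cap\mathcal{D}$ still composes with $p$ to something factoring through a projective-injective. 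For surjectivity, given $[f] : N \to M$, use that $\mathrm{Ext}^1_R(N,B)=0$ (as $N \in \mathcal{A}$, $B \in \mathcal{B}$) to lift $f$ through $p$ to some $\tilde f : N \to A$ with $p\tilde f = f$; this is exactly the argument of Lemma \ref{lem:maps}(i) with $A$ in place of $A'$. For injectivity, suppose $\tilde f : N \to A$ satisfies $p\tilde f$ stably zero; then $p\tilde f$ factors through a projective-injective $Q \in \mathcal{C}\cap\mathcal{D}$, and the lifting trick of Lemma \ref{lem:maps}(iii) (lifting the map out of $Q$ along $p$, using $\mathrm{Ext}^1_R(Q,B)=0$ or rather $\mathrm{Ext}^1$ into a $\mathcal{B}$-kernel as in the proof of (ii)) shows $\tilde f$ is stably zero. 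So post-composition with $p$ is a bijection $\underline{\mathrm{Hom}}_R(N,A) \cong \underline{\mathrm{Hom}}_R(N,M)$.

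Next I would check naturality in both variables. Naturality in $N$ is immediate since everything is post-composition. Naturality in $M$ is where the functoriality package from Lemma \ref{lem:maps} does the work: a morphism $[h] : M \to M'$ induces $i^![h] = [g] : A \to A'$ with $p'g = hp$ (well-definedness and functoriality already established before the Proposition), and the square relating post-composition by $p$ and by $p'$ commutes on the nose before passing to stable categories, since $p'g = hp$. Thus the bijection is natural and $i^!$ is right adjoint to $i_*$. Since $i_*$ is the inclusion of a full triangulated subcategory (here I use that the stable categories in question are triangulated and that the inclusion is exact, which is part of the standing hypotheses identifying the projective-injectives), a right adjoint between triangulated categories that is also exact — or, more simply, a right adjoint of a triangulated functor — is automatically triangulated by the standard argument that adjoints of triangulated functors preserve triangulated structure; I would cite this formally (e.g. via \cite[Chapter 2]{N2}).

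Finally, for the identity statement: if $N \in \mathcal{A}\cap\mathcal{D}$, then $i_*N = N$ and a short exact sequence of the form (1) for $N$ may be taken to be the split sequence $0 \to 0 \to N \xrightarrow{\mathrm{id}} N \to 0$ (legitimate since $N \in \mathcal{A}$ already, so we need no genuine $\mathcal{A}$-approximation), whence $i^!i_*N = N$ in $\underline{\mathcal{A}\cap\mathcal{D}}$; the uniqueness of $A$ up to canonical stable isomorphism established via Lemma \ref{lem:maps} guarantees this is independent of the chosen sequence, and one checks the canonical isomorphism is natural, so $i^! \circ i_* = \mathrm{id}$ as functors. I expect the only subtle point to be pinning down that the unit of the adjunction restricted to $\underline{\mathcal{A}\cap\mathcal{D}}$ is genuinely the identity natural transformation and not merely a natural isomorphism — this follows because for $N \in \mathcal{A}$ one can choose the approximation sequence to be the identity, making the unit literally $\mathrm{id}_N$; I would remark on this explicitly rather than leave it to the reader.
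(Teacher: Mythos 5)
Your proposal is correct and follows essentially the same route as the paper's proof: the adjunction is established by showing that post-composition with $p$ gives a natural bijection $\underline{\mathrm{Hom}}_R(N,A)\cong\underline{\mathrm{Hom}}_R(N,M)$, with surjectivity from ${\rm Ext}^1_R(N,B)=0$ and injectivity from Lemma \ref{lem:maps}(iii), triangulatedness from the general fact about adjoints of triangulated functors, and $i^!\circ i_*={\rm id}$ from choosing the trivial approximation sequence $0\to 0\to N\to N\to 0$. Your extra remarks on the well-definedness of $[p]_*$ and on the unit being literally the identity are sound but not needed beyond what the paper already records.
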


\begin{proof}
We fix $N\in \mathcal{A} \cap \mathcal{D}$ and let
$M \in \mathcal{C} \cap \mathcal{D}$. We also consider a short exact
sequence (1), where $B \in \mathcal{C} \cap \mathcal{B}$ and
$A \in \mathcal{A} \cap \mathcal{D}$. We note that the additive map
\[ [p]_{*} : \underline{\mbox{Hom}}_R(N,A) \longrightarrow
           \underline{\mbox{Hom}}_R(N,M) \]
is natural in both $N$ (this is obvious) and $M$ (this follows from
Lemma \ref{lem:maps}(ii)). We establish the adjunction in the statement
of the Proposition, by proving that $[p]_{*}$ is bijective. Indeed,
since the group $\mbox{Ext}^{1}_R(N,B)$ is trivial, the additive map
\[ p_{*} : \mbox{Hom}_R(N,A) \longrightarrow \mbox{Hom}_R(N,M) \]
is surjective, whence the surjectivity of $[p]_{*}$. Regarding the
injectivity of $[p]_{*}$, consider a map $f : N \rightarrow A$,
such that
$[pf] = [p] \cdot [f] = [p]_{*}[f] = [0] \in
 \underline{\mbox{Hom}}_R(N,M)$.
Then, we may consider the commutative diagram
\[
\begin{array}{ccccccccc}
 0 & \longrightarrow & 0 & \longrightarrow & N
   & \stackrel{1_N}{\longrightarrow} & N
   & \longrightarrow & 0 \\
 & & & & \!\!\! {\scriptstyle{f}} \downarrow & &
         \!\!\!\!\! {\scriptstyle{pf}} \downarrow
 & & \\
 0 & \longrightarrow & B & \longrightarrow & A
   & \stackrel{p}{\longrightarrow} & M & \longrightarrow & 0
\end{array}
\]
and invoke Lemma \ref{lem:maps}(iii), in order to conclude that
$[f] = [0] \in \underline{\mbox{Hom}}_R(N,A)$.

Being right adjoint to the triangulated functor $i_*$, the
functor $i^!$ is also triangulated; cf.\ \cite[Lemma 5.3.6]{N1}.
In order to verify that the composition $i^! \circ i_*$ is
the identity on $\underline{\mathcal{A} \cap \mathcal{D}}$, we
simply note that for any $M \in \mathcal{A} \cap \mathcal{D}$
we can choose the approximation sequence
\[ 0 \longrightarrow 0 \longrightarrow M
     \stackrel{1_M}{\longrightarrow} M \longrightarrow 0 , \]
so that $i^!M=M$.
\end{proof}

\vspace{0.1in}

\noindent
The completeness of the cotorsion pair
$\left( \mathcal{A}, \mathcal{B} \right)$ also implies that
for any $M\in \mathcal{C}\cap \mathcal{D}$ there exists a
short exact sequence in $\mathcal{E}$
\begin{equation}
 0 \longrightarrow M \longrightarrow B \longrightarrow A
   \longrightarrow 0 ,
\end{equation}
where $B \in \mathcal{B}$ and $A \in \mathcal{A}$. Our assumptions
on $\mathcal{C}$ and $\mathcal{D}$ imply that
$A\in \mathcal{A}\cap \mathcal{D}$ and $B\in \mathcal{C}\cap \mathcal{B}$.
Working as above, we can show that $B$ is uniquely determined, up to a
canonical isomorphism in the stable category
$\underline{\mathcal{C}\cap \mathcal{B}}$, by $M$
and the assignment $M \mapsto B$ defines an additive functor
\[ j^* : \underline{\mathcal{C} \cap \mathcal{D}} \longrightarrow
         \underline{\mathcal{C} \cap \mathcal{B}}, \]
which is left adjoint to the inclusion functor
$j_* : \underline{\mathcal{C} \cap \mathcal{B}} \longrightarrow
       \underline{\mathcal{C}\cap \mathcal{D}}$.
In particular, the functor $j^*$ is triangulated. Moreover,
the composition $j^* \circ j_*$ is the identity on
$\underline{\mathcal{C}\cap \mathcal{B}}$.

\begin{Lemma}
For any $M \in \mathcal{C}\cap \mathcal{D}$, we have
$j^*M = 0 \in \underline{\mathcal{C} \cap \mathcal{B}}$
if and only if $M \in \mathcal{A} \cap \mathcal{D}$.
\end{Lemma}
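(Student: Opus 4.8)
The plan is to prove both implications of this equivalence using the defining short exact sequence (2) together with the explicit description of $j^*$.

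First I would establish the easy direction. Suppose $M \in \mathcal{A} \cap \mathcal{D}$. Then, just as in the construction of $i^!$, we are free to choose the approximation sequence (2) to be the split sequence $0 \longrightarrow M \stackrel{1_M}{\longrightarrow} M \longrightarrow 0 \longrightarrow 0$, with $B = M$ and $A = 0$; here $M$ lies in $\mathcal{A} \cap \mathcal{B} \subseteq \mathcal{C} \cap \mathcal{B}$ because $M \in \mathcal{A} \cap \mathcal{D}$ and $A = 0 \in \mathcal{A}$ forces $B = M \in \mathcal{B}$. By the uniqueness of $B$ up to canonical isomorphism in $\underline{\mathcal{C} \cap \mathcal{B}}$, we get $j^*M \cong M$; but $M$ is a projective-injective object of $\mathcal{C} \cap \mathcal{B}$ (it lies in the kernel $\mathcal{A} \cap \mathcal{B}$), so $M = 0$ in $\underline{\mathcal{C} \cap \mathcal{B}}$, i.e.\ $j^*M = 0$. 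Alternatively, one can phrase this via the adjunction: since $M \in \mathcal{A} \cap \mathcal{D}$, we have $M = i_*(M')$ for some $M' \in \underline{\mathcal{A} \cap \mathcal{D}}$, but this route is less direct because it is not immediate that $j^* \circ i_* = 0$, so I would stick with the explicit choice of (2).

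For the converse, assume $j^*M = 0 \in \underline{\mathcal{C} \cap \mathcal{B}}$. Fix a short exact sequence (2), $0 \longrightarrow M \longrightarrow B \longrightarrow A \longrightarrow 0$ with $A \in \mathcal{A} \cap \mathcal{D}$ and $B \in \mathcal{C} \cap \mathcal{B}$, so that $j^*M = B$. The hypothesis says $B = 0$ in the stable category, which means $B$ is a projective-injective object of $\mathcal{C} \cap \mathcal{B}$, hence $B \in \mathcal{A} \cap \mathcal{B}$; in particular $B \in \mathcal{A}$. Now $\mathcal{A}$ is closed under kernels of epimorphisms (it is the left-hand side of a hereditary cotorsion pair, and $\mathcal{E}$ has enough projectives/injectives in the relevant sense via the completeness assumption), and the sequence (2) exhibits $M$ as the kernel of the epimorphism $B \twoheadrightarrow A$ between objects of $\mathcal{A}$; therefore $M \in \mathcal{A}$. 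Since we also assumed $M \in \mathcal{C} \cap \mathcal{D} \subseteq \mathcal{D}$, we conclude $M \in \mathcal{A} \cap \mathcal{D}$, as desired.

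The main point requiring care is the claim that an object $B \in \mathcal{C} \cap \mathcal{B}$ which is zero in $\underline{\mathcal{C} \cap \mathcal{B}}$ actually lies in $\mathcal{A}$: by definition, being zero in the stable category of the Frobenius category $\mathcal{C} \cap \mathcal{B}$ means $1_B$ factors through a projective-injective object, hence $B$ is a direct summand of a projective-injective object, hence itself projective-injective in $\mathcal{C} \cap \mathcal{B}$; by the standing hypothesis the projective-injective objects of $\mathcal{C} \cap \mathcal{B}$ are precisely the objects of the kernel $\mathcal{A} \cap \mathcal{B}$, so $B \in \mathcal{A} \cap \mathcal{B}$. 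With this settled, the closure of $\mathcal{A}$ under kernels of epimorphisms (equivalently, the hereditary property of the cotorsion pair $(\mathcal{A}, \mathcal{B})$) finishes the argument. The rest is a routine application of the uniqueness statement that underlies the very definition of the functor $j^*$.
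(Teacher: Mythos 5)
Your converse direction is correct and is essentially the paper's argument: if $j^*M=0$ then the middle term $B$ of a sequence (2) is projective-injective in $\mathcal{C}\cap\mathcal{B}$, hence lies in the kernel $\mathcal{A}\cap\mathcal{B}$, and closure of $\mathcal{A}={}^{\perp}\mathcal{B}$ under kernels of epimorphisms (from the hereditary property) gives $M\in\mathcal{A}\cap\mathcal{D}$.

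The forward (``easy'') direction, however, has a genuine gap. You propose to take the split sequence
$0 \longrightarrow M \stackrel{1_M}{\longrightarrow} M \longrightarrow 0 \longrightarrow 0$
as an instance of (2), with $B=M$ and $A=0$. For this to be a legitimate approximation sequence you must have $B=M\in\mathcal{B}$, and your justification --- that ``$A=0\in\mathcal{A}$ forces $B=M\in\mathcal{B}$'' --- is not a valid deduction: membership of $M$ in $\mathcal{B}$ is a hypothesis you must verify, not a consequence of the other term being in $\mathcal{A}$. In general $M\in\mathcal{A}\cap\mathcal{D}$ does \emph{not} imply $M\in\mathcal{B}$; for example, in the setting of Proposition \ref{prop:cof-PGF} one has $\mathcal{A}={\tt Cof}(kG)$ and $\mathcal{B}={\tt Cof}(kG)^{\perp}$, and a cofibrant module lies in ${\tt Cof}(kG)^{\perp}$ only when it is projective. (Note the asymmetry with the $i^!$ construction: there one may take $0\to 0\to M\to M\to 0$ because the term required to lie in $\mathcal{B}$ is $0$.) The repair is short and is what the paper does: take \emph{any} genuine sequence (2), $0\to M\to B\to A\to 0$ with $A\in\mathcal{A}\cap\mathcal{D}$ and $B\in\mathcal{C}\cap\mathcal{B}$; since $M,A\in\mathcal{A}$ and $\mathcal{A}={}^{\perp}\mathcal{B}$ is closed under extensions, $B\in\mathcal{A}\cap\mathcal{B}$ is projective-injective in $\mathcal{C}\cap\mathcal{B}$, whence $j^*M=B=0$ in $\underline{\mathcal{C}\cap\mathcal{B}}$. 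With that substitution your argument coincides with the paper's, which phrases both implications as the single equivalence ``$M\in\mathcal{A}\cap\mathcal{D}$ if and only if $B\in\mathcal{A}\cap\mathcal{B}$''.
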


\begin{proof}
The short exact sequence (2) implies that
$M \in \mathcal{A} \cap \mathcal{D}$ if and only if
$B \in \mathcal{A}\cap \mathcal{B}$. On the other hand,
$B = j^*M = 0 \in \underline{\mathcal{C} \cap \mathcal{B}}$
if and only if $B$ is a projective-injective object of the
Frobenius category $\mathcal{C} \cap \mathcal{B}$. This
completes the proof, since the projective-injective objects
of $\mathcal{C} \cap \mathcal{B}$ are precisely the modules
in $\mathcal{A} \cap \mathcal{B}$.
\end{proof}

\noindent
We summarize the discussion above in the form of the following
result, which establishes the existence of a localization sequence
of triangulated categories; cf.\ \cite{N2, V}.

\begin{Theorem}\label{thm:loc-seq}
The functors defined above induce a localization sequence
\[ \underline{\mathcal{A}\cap \mathcal{D}}
   \stackrel{i_*}{\longrightarrow}
   \underline{\mathcal{C} \cap\mathcal{D}}
   \stackrel{j^*}{\longrightarrow}
   \underline{\mathcal{C} \cap \mathcal{B}}. \]
The right adjoint of the inclusion $i_*$ is
$i^! : \underline{\mathcal{C} \cap\mathcal{D}}
       \longrightarrow
       \underline{\mathcal{A} \cap \mathcal{D}}$
and the right adjoint of $j^*$ is the inclusion
$j_* : \underline{\mathcal{C} \cap \mathcal{B}}
       \longrightarrow
       \underline{\mathcal{C} \cap\mathcal{D}}$.
Consequently, the functor $j^*$ induces an
equivalence of triangulated categories
\[ \underline{\mathcal{C} \cap\mathcal{D}} /
   \underline{\mathcal{A} \cap\mathcal{D}}
   \stackrel{\sim}{\longrightarrow}
   \underline{\mathcal{C} \cap \mathcal{B}}. \]
\end{Theorem}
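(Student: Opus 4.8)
The plan is to assemble the localization sequence from the two adjunctions already established, and then to extract the Verdier quotient equivalence as a formal consequence. The two preceding results provide exactly what is needed: first, that $i^! : \underline{\mathcal{C}\cap\mathcal{D}} \to \underline{\mathcal{A}\cap\mathcal{D}}$ is right adjoint to the inclusion $i_* : \underline{\mathcal{A}\cap\mathcal{D}} \to \underline{\mathcal{C}\cap\mathcal{D}}$ with $i^! i_* = \mathrm{id}$; second, that $j^* : \underline{\mathcal{C}\cap\mathcal{D}} \to \underline{\mathcal{C}\cap\mathcal{B}}$ is left adjoint to the inclusion $j_* : \underline{\mathcal{C}\cap\mathcal{B}} \to \underline{\mathcal{C}\cap\mathcal{D}}$ with $j^* j_* = \mathrm{id}$; and third, the kernel computation $\ker j^* = \underline{\mathcal{A}\cap\mathcal{D}}$ (in the sense that $j^*M = 0$ iff $M \in \mathcal{A}\cap\mathcal{D}$), which is precisely the content of the Lemma immediately above.

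First I would record that all four functors $i_*, i^!, j_*, j^*$ are triangulated — this has already been noted for $i_*$ (an inclusion of stable categories of Frobenius categories with the same projective-injectives), for $j^*$ and $j_*$ likewise, and for $i^!$ as a right adjoint of a triangulated functor via \cite[Lemma 5.3.6]{N1}. Next I would verify that $i_*$ is fully faithful: this is immediate because $\mathcal{A}\cap\mathcal{D}$ is a full exact subcategory of $\mathcal{C}\cap\mathcal{D}$ with the same projective-injective objects, so the stable Hom-groups agree. Similarly $j_*$ is fully faithful. The data $(i_*, i^!)$ together with $(j^*, j_*)$, plus the identification of $\mathrm{im}\, i_* = \ker j^*$ as a thick subcategory (thickness follows from Rickard's criterion, since it is the essential image of a fully faithful triangulated functor and is closed under summands as $\mathcal{A}\cap\mathcal{D}$ is closed under direct summands), is by definition a localization sequence in the sense of \cite{N2, V}. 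So the displayed sequence with its stated adjoints is exactly such a localization sequence.

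Finally, for the Verdier quotient equivalence: in any localization sequence $\mathcal{S} \xrightarrow{i_*} \mathcal{T} \xrightarrow{j^*} \mathcal{U}$, the functor $j^*$ is a Verdier localization with kernel $\mathcal{S}$, so it induces an equivalence $\mathcal{T}/\mathcal{S} \xrightarrow{\sim} \mathcal{U}$. Concretely, $j^*$ annihilates $\underline{\mathcal{A}\cap\mathcal{D}} = \ker j^*$, hence factors through the quotient functor $\mathcal{T} \to \mathcal{T}/\underline{\mathcal{A}\cap\mathcal{D}}$, giving an induced triangulated functor $\overline{j^*}$; the fact that $j^*$ has a fully faithful right adjoint $j_*$ with $j^* j_* = \mathrm{id}$ shows $j^*$ is (up to equivalence) the Bousfield localization at $\underline{\mathcal{A}\cap\mathcal{D}}$, so $\overline{j^*}$ is an equivalence. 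The main obstacle — really the only nontrivial point — is not any of these formal manipulations but rather being sure that the hypotheses genuinely yield the kernel identification $\mathrm{im}\, i_* = \ker j^*$ as a \emph{thick} subcategory on which the Bousfield machinery applies; this, however, is already taken care of by the Lemma above together with the closure of $\mathcal{A}$ under summands, so the proof is essentially a citation-and-assembly argument.
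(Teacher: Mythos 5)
Your proposal is correct and takes essentially the same approach as the paper: the theorem there is explicitly stated as a summary of the preceding results (the adjunction $(i_*,i^!)$ with $i^!i_*=\mathrm{id}$, the dual adjunction $(j^*,j_*)$ with $j^*j_*=\mathrm{id}$, and the kernel identification of the Lemma), with the Verdier quotient equivalence extracted formally from the localization-sequence machinery of Neeman and Verdier. Your added remarks on full faithfulness of the inclusions and thickness of $\mathrm{im}\,i_*=\ker j^*$ are exactly the routine verifications the paper leaves implicit.
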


\vspace{0.1in}
\noindent
{\sc II.\ Stable categories of $kG$-modules.}
We now specialize the discussion in Subsection I above to the case of
certain classes of modules over the group algebra $kG$ of a group $G$.

Let ${\tt PGF}(kG)$ be the class of projectively coresolved
Gorenstein flat $kG$-modules. It is easily seen that ${\tt PGF}(kG)$
is a Frobenius category with projective-injective objects given by
the projective modules. It follows from \cite[Lemma 4.5]{ER1} that
${\tt Cof}(kG)$ is also a Frobenius category with projective-injective
objects given by the projective modules. Finally, using the arguments
in the proof of \cite[Lemma 4.5]{ER1}, it follows that the category
${\tt PGF}(kG)\cap {\tt Cof}(kG)^{\perp}$ is Frobenius, with
projective-injective objects the projective modules as well.

\begin{Proposition}\label{prop:cof-PGF}
There is a localization sequence
\[ \underline{\tt Cof}(kG)\stackrel{i_*}{\longrightarrow}
   \underline{\tt PGF}(kG) \stackrel{j^*}{\longrightarrow}
   \underline{{\tt PGF}\cap {\tt Cof}^\perp}(kG). \]
The functor $j^*$ induces a triangle-equivalence
\[ \underline{\tt PGF}(kG)/ \underline{\tt Cof}(kG)\simeq
   \underline{{\tt PGF}\cap {\tt Cof}^\perp}(kG). \]
\end{Proposition}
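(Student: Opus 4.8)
The plan is to obtain the statement as a direct application of Theorem \ref{thm:loc-seq} to a suitable instance of the general framework set up in Subsection I. Concretely, I would let $\mathcal{E}$ be the whole category of $kG$-modules, take as complete hereditary cotorsion pair $(\mathcal{A},\mathcal{B}) = ({\tt Cof}(kG),{\tt Cof}(kG)^\perp)$, and then put $\mathcal{C} = {\tt PGF}(kG)$ and $\mathcal{D} = kG\text{-Mod}$. With these choices one has
\[ \mathcal{A}\cap\mathcal{D} = {\tt Cof}(kG), \quad
   \mathcal{C}\cap\mathcal{D} = {\tt PGF}(kG), \quad
   \mathcal{C}\cap\mathcal{B} = {\tt PGF}(kG)\cap{\tt Cof}(kG)^\perp , \]
so that the localization sequence and the associated Verdier quotient equivalence produced by Theorem \ref{thm:loc-seq} are precisely those appearing in the statement.

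It then remains to verify that the hypotheses of Theorem \ref{thm:loc-seq} hold for these choices. First, $({\tt Cof}(kG),{\tt Cof}(kG)^\perp)$ is a complete cotorsion pair by \cite[Theorem 3.3]{ER1}, and it is hereditary because ${\tt Cof}(kG)$ is closed under kernels of epimorphisms. Second, the class $\mathcal{C} = {\tt PGF}(kG)$ is closed under extensions and under kernels of epimorphisms and contains every cofibrant module, so that $\mathcal{A}\subseteq\mathcal{C}$; the class $\mathcal{D} = kG\text{-Mod}$ trivially contains $\mathcal{B}$ and is closed under extensions and cokernels of monomorphisms. Third, the three categories ${\tt Cof}(kG)$, ${\tt PGF}(kG)$ and ${\tt PGF}(kG)\cap{\tt Cof}(kG)^\perp$ are Frobenius categories whose projective-injective objects are exactly the projective $kG$-modules, as recalled just before the statement; and these common projective-injective objects coincide with the modules in the kernel $\mathcal{A}\cap\mathcal{B} = {\tt Cof}(kG)\cap{\tt Cof}(kG)^\perp$ of the cotorsion pair, since, using that ${\tt Cof}(kG)$ is Frobenius, every cofibrant module embeds into a projective module with cofibrant cokernel, and this embedding splits once the module is moreover assumed to lie in ${\tt Cof}(kG)^\perp$, forcing the module to be projective. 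Once all of this is in place, Theorem \ref{thm:loc-seq} applies verbatim and delivers the localization sequence
\[ \underline{\tt Cof}(kG)\stackrel{i_*}{\longrightarrow}
   \underline{\tt PGF}(kG)\stackrel{j^*}{\longrightarrow}
   \underline{{\tt PGF}\cap{\tt Cof}^\perp}(kG), \]
together with the triangle-equivalence
$\underline{\tt PGF}(kG)/\underline{\tt Cof}(kG)\stackrel{\sim}{\longrightarrow}
 \underline{{\tt PGF}\cap{\tt Cof}^\perp}(kG)$
induced by $j^*$.

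I expect the only genuinely non-formal ingredient, and hence the main point to be careful about, to be the inclusion ${\tt Cof}(kG)\subseteq{\tt PGF}(kG)$: this is what makes the abstract machinery land in the right categories. It guarantees that in the defining short exact sequence $0\to B\to A\to M\to 0$ coming from a special ${\tt Cof}(kG)$-precover of a module $M\in{\tt PGF}(kG)$, the middle term $A\in{\tt Cof}(kG)$ is again a ${\tt PGF}$-module, so that the functor $i^!$ of Theorem \ref{thm:loc-seq} actually takes values in $\underline{\tt PGF}(kG)$; dually, in the sequence $0\to M\to B\to A\to 0$ used to define $j^*$ it ensures that $B$ lies in ${\tt PGF}(kG)\cap{\tt Cof}(kG)^\perp$. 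Everything else is a routine translation of the results of Subsection I, so no further obstacle is anticipated.
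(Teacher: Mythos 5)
Your proposal is correct and follows essentially the same route as the paper: both instantiate Theorem \ref{thm:loc-seq} with $\mathcal{E}=\mathcal{D}$ the class of all $kG$-modules, $(\mathcal{A},\mathcal{B})=({\tt Cof}(kG),{\tt Cof}(kG)^\perp)$ and $\mathcal{C}={\tt PGF}(kG)$, checking the same hypotheses (the complete hereditary cotorsion pair from \cite[Theorem 3.3]{ER1}, the inclusion ${\tt Cof}(kG)\subseteq{\tt PGF}(kG)$, closure of ${\tt PGF}(kG)$ under extensions and kernels of epimorphisms, and the Frobenius structures discussed before the statement). The only cosmetic difference is that the paper cites \cite[Proposition 8.2]{S} and \cite[Theorem 4.9]{SS} for the key inclusion and closure properties, where you supply short ad hoc justifications instead.
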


\begin{proof}
We consider the complete hereditary cotorsion pair
$({\tt Cof}(kG), {\tt Cof}(kG)^\perp)$, whose kernel is the
class of projective $kG$-modules; cf.\ \cite[Theorem 3.3]{ER1}.
As shown in \cite[Proposition 8.2]{S}, we have an inclusion
${\tt Cof}(kG) \subseteq {\tt PGF}(kG)$. We note that the class
${\tt PGF}(kG)$ is closed under extensions and kernels of
epimorphisms; cf.\ \cite[Theorem 4.9]{SS}). Letting
$\mathcal{E} = \mathcal{D}$ be the class of all $kG$-modules and
$\mathcal{C}$ the class of projectively coresolved Gorenstein flat
modules, the discussion above shows that all of the hypotheses in
the beginning of $\S $4.I are satisfied. The result is therefore
a particular case of Theorem \ref{thm:loc-seq}.
\end{proof}

\begin{Remark}
The question as to whether all Gorenstein projective modules
are projectively coresolved Gorenstein flat is open. If $R$
is any ring, then $({\tt PGF}(R), {\tt PGF}(R)^\perp)$ is a
complete hereditary cotorsion pair; this is proved in
\cite[Theorem 4.9]{SS}. Hence, as yet another application of
Theorem \ref{thm:loc-seq}, we obtain a localization sequence
\[ \underline{\tt PGF}(R)\stackrel{i_*}{\longrightarrow}
   \underline{\tt GProj}(R) \stackrel{j^*}{\longrightarrow}
   \underline{{\tt GProj} \cap {\tt PGF}^\perp}(R) , \]
with the functor $j^*$ inducing a triangle-equivalence
\[ \underline{\tt GProj}(R)/ \underline{\tt PGF}(R)\simeq
   \underline{{\tt GProj} \cap {\tt PGF}^\perp}(R). \]
It follows that all Gorenstein projective modules are
projectively coresolved Gorenstein flat if and only if
${\tt GProj}(R)\cap{\tt PGF}(R)^\perp$ is the class of
projective modules.
\end{Remark}

\noindent
Recall that a $kG$-module $C$ is cotorsion if ${\rm Ext}^1_{kG}(F, C)=0$
for any flat $kG$-module $F$. Since any flat $kG$-module is cofibrant-flat,
it follows that ${\tt Cof.flat}(kG)^{\perp} \subseteq {\tt Cotor}(kG)$. The
category ${\tt GFlat}(kG)\cap {\tt Cotor}(kG)$ of cotorsion Gorenstein flat
modules is Frobenius, with projective-injective objects the flat cotorsion
modules; cf.\ \cite[Theorem 5.6]{E2}. Hence, any cotorsion Gorenstein flat
module is a cokernel of an acyclic complex of flat cotorsion $kG$-modules,
all of whose cokernels are cotorsion Gorenstein flat.

\begin{Lemma}
The categories ${\tt Cof.flat}(kG)\cap {\tt Cotor}(kG)$ and
${\tt GFlat}(kG) \cap {\tt Cof.flat}(kG)^{\perp}$ are Frobenius with
projective-injective objects (in both cases) the flat cotorsion modules.
\end{Lemma}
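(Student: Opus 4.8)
The plan is to treat both categories simultaneously by setting up the appropriate instances of the axioms recorded in the preamble to Lemma~\ref{lem:maps} and then appealing to the Frobenius-structure machinery already developed. First I would recall that the pair $\left( {\tt Cof.flat}(kG), {\tt Cof.flat}(kG)^{\perp} \right)$ is a complete hereditary cotorsion pair (this is established in \cite{ER2}); its kernel ${\tt Cof.flat}(kG) \cap {\tt Cof.flat}(kG)^{\perp}$ is precisely the class of flat cotorsion $kG$-modules, since a module in the kernel is flat (being cofibrant-flat and cotorsion-orthogonal to all flats, in particular to itself shifts nothing, but more directly because the kernel of a cotorsion pair generated along these lines consists of flat cotorsion modules by the analogue of \cite[Theorem 3.3]{ER1}), hence the identification of the projective-injective objects. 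I would then verify the two ``sandwiching'' hypotheses: the class ${\tt GFlat}(kG)$ contains ${\tt Cof.flat}(kG)$ (by \cite[Proposition 5.2(i)]{ET2}), is closed under extensions and kernels of epimorphisms, so it plays the role of $\mathcal{C}$; and ${\tt Cotor}(kG)$ contains ${\tt Cof.flat}(kG)^{\perp}$, is closed under extensions and cokernels of monomorphisms, so it plays the role of $\mathcal{D}$.

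With $\mathcal{A} = {\tt Cof.flat}(kG)$, $\mathcal{B} = {\tt Cof.flat}(kG)^{\perp}$, $\mathcal{C} = {\tt GFlat}(kG)$, $\mathcal{D} = {\tt Cotor}(kG)$, the general framework then asserts that $\mathcal{A} \cap \mathcal{D} = {\tt Cof.flat}(kG) \cap {\tt Cotor}(kG)$ and $\mathcal{C} \cap \mathcal{B} = {\tt GFlat}(kG) \cap {\tt Cof.flat}(kG)^{\perp}$ are among the categories whose Frobenius structure is built into the hypotheses of \S4.I. So the task reduces to checking that these two specific intersections are genuinely Frobenius with the flat cotorsion modules as the common class of projective-injective objects. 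For ${\tt Cof.flat}(kG) \cap {\tt Cotor}(kG)$ I would argue directly: it is extension-closed (both factors are), it has enough projectives and injectives given by flat cotorsion modules because the complete cotorsion pair supplies, for any $M$ in the intersection, short exact sequences $0 \to B \to A \to M \to 0$ and $0 \to M \to B' \to A' \to 0$ with $A, A'$ cofibrant-flat and $B, B'$ in ${\tt Cof.flat}(kG)^{\perp}$, and one checks these outer terms land in the intersection and that the ``projective'' and ``injective'' roles are played by the kernel modules, i.e.\ the flat cotorsion ones; the self-orthogonality ${\rm Ext}^1_{kG}(\text{flat cotorsion}, {\tt Cof.flat} \cap {\tt Cotor}) = 0$ on both sides follows from flats being cofibrant-flat and from the definition of cotorsion, and the hereditary property of the cotorsion pair upgrades this to all higher Ext.

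For the second category, ${\tt GFlat}(kG) \cap {\tt Cof.flat}(kG)^{\perp}$, I would mimic the argument indicated in the paper for ${\tt PGF}(kG) \cap {\tt Cof}(kG)^{\perp}$, namely the reasoning in the proof of \cite[Lemma 4.5]{ER1} transported to the flat setting: using that ${\tt GFlat}(kG) \cap {\tt Cotor}(kG)$ is already known to be Frobenius with flat cotorsion projective-injectives by \cite[Theorem 5.6]{E2}, and that ${\tt Cof.flat}(kG)^{\perp} \subseteq {\tt Cotor}(kG)$, one intersects further and checks that the horseshoe-type constructions keep one inside ${\tt Cof.flat}(kG)^{\perp}$; closure of ${\tt GFlat}(kG)$ under kernels of epimorphisms between its objects (a homological analogue of the corresponding fact for Gorenstein projectives, available since ${\tt GFlat}(kG) = {\tt Cof.flat}(kG)^{\perp\perp}$ in the relevant exact structure or via \cite{SS}-style arguments) is what allows the approximation sequences to stay inside the class. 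The main obstacle I anticipate is this last point: verifying that the relevant Ext-orthogonality and closure properties of ${\tt GFlat}(kG)$ are strong enough to guarantee that the syzygies and cosyzygies taken inside ${\tt GFlat}(kG) \cap {\tt Cof.flat}(kG)^{\perp}$ do not escape the class, and that the flat cotorsion modules really exhaust both the projective and the injective objects there rather than forming a proper subclass of one of them. Once that is pinned down, the Frobenius axioms (enough projectives, enough injectives, coincidence of the two classes) follow formally, and the proof is complete.
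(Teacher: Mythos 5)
Your overall strategy (verify the hypotheses of \S 4.I with $\mathcal{A}={\tt Cof.flat}(kG)$, $\mathcal{B}={\tt Cof.flat}(kG)^{\perp}$, $\mathcal{C}={\tt GFlat}(kG)$, $\mathcal{D}={\tt Cotor}(kG)$, using \cite[Theorem 5.6]{E2} as the model) is the right one, but both halves of your argument have a gap at the decisive point. For ${\tt Cof.flat}(kG)\cap{\tt Cotor}(kG)$, the approximation sequences $0\to B\to A\to M\to 0$ and $0\to M\to B'\to A'\to 0$ supplied by the complete cotorsion pair $({\tt Cof.flat}(kG),{\tt Cof.flat}(kG)^{\perp})$ have middle terms $A\in{\tt Cof.flat}(kG)$ and $B'\in{\tt Cof.flat}(kG)^{\perp}$, which are in general neither flat nor flat cotorsion; so these sequences do not exhibit enough projectives or enough injectives relative to the class of flat cotorsion modules, which is what the Frobenius claim requires. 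The paper instead reruns the construction of \cite[Theorem 5.6]{E2}: one shows that every cofibrant-flat cotorsion module is a cokernel of an acyclic complex of flat cotorsion modules all of whose cokernels are again cofibrant-flat and cotorsion, and reads off both approximations from that complex.

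For ${\tt GFlat}(kG)\cap{\tt Cof.flat}(kG)^{\perp}$ you correctly isolate the crux --- that the syzygies and cosyzygies must not escape ${\tt Cof.flat}(kG)^{\perp}$ --- but you leave it unresolved, and the closure properties of ${\tt GFlat}(kG)$ that you invoke do not address it (they control membership in ${\tt GFlat}(kG)$, not in the right orthogonal). The missing ingredient is that ${\tt Cof.flat}(kG)^{\perp}$ has the two-out-of-three property for short exact sequences of cotorsion modules (\cite[Proposition 2.9]{ER2}). Granting this, one starts from the acyclic complex $F$ of flat cotorsion modules with cotorsion Gorenstein flat cokernels provided by \cite[Theorem 5.6]{E2} for $M=C_0F$; since $M$ and all the components of $F$ lie in ${\tt Cof.flat}(kG)^{\perp}$ and all cokernels are cotorsion, the two-out-of-three property propagates membership in ${\tt Cof.flat}(kG)^{\perp}$ to every cokernel in both directions, and the standard argument of \cite[Theorem 5.6]{E2} then yields the Frobenius structure. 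Without this (or an equivalent) input, the claim that the flat cotorsion modules serve as both the projective and the injective objects of ${\tt GFlat}(kG)\cap{\tt Cof.flat}(kG)^{\perp}$ does not follow.
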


\begin{proof}
The argument regarding ${\tt Cof.flat}(kG)\cap {\tt Cotor}(kG)$ is
completely analogous to that in the proof of \cite[Theorem 5.6]{E2}.
Regarding ${\tt GFlat}(kG) \cap {\tt Cof.flat}(kG)^{\perp}$, we note
that any $kG$-module $M$ contained therein is a cokernel of an acyclic
complex of flat cotorsion $kG$-modules, all of whose cokernels are also
contained in ${\tt GFlat}(kG) \cap {\tt Cof.flat}(kG)^{\perp}$. Indeed,
let $F$ be an acyclic complex of flat cotorsion $kG$-modules with $M=C_0F$,
all of whose cokernels are cotorsion Gorenstein flat. Then, these cokernels
are contained in ${\tt GFlat}(kG) \cap {\tt Cof.flat}(kG)^{\perp}$, since
the class ${\tt Cof.flat}(kG)^{\perp}$ contains all flat cotorsion
$kG$-modules and has the 2-out-of-3 property for short exact sequences
of cotorsion modules; cf. \cite[Proposition 2.9]{ER2}. Then, the standard
argument in the proof of \cite[Theorem 5.6]{E2} shows that
${\tt GFlat}(kG) \cap {\tt Cof.flat}(kG)^{\perp}$ is also Frobenius with
projective-injective objects the flat cotorsion modules.
\end{proof}

\begin{Proposition}\label{prop:CF-GF}
There is a localization sequence
\[ \underline{{\tt Cof.flat}\cap{\tt Cotor}}(kG)\stackrel{i_*}{\longrightarrow}
   \underline{{\tt GFlat}\cap{\tt Cotor}}(kG) \stackrel{j^*}{\longrightarrow}
   \underline{{\tt GFlat}\cap {\tt Cof.flat}^\perp}(kG). \]
The functor $j^*$ induces a triangle-equivalence
\[ \underline{{\tt GFlat}\cap{\tt Cotor}}(kG)/ \underline{{\tt Cof.flat}\cap{\tt Cotor}}(kG)
\simeq \underline{{\tt GFlat}\cap {\tt Cof.flat}^\perp}(kG). \]
\end{Proposition}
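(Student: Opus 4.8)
The plan is to deduce this statement as another particular case of the general machinery in Theorem \ref{thm:loc-seq}, exactly as was done for Proposition \ref{prop:cof-PGF}. First I would fix the ambient exact category: here the right choice is to take $\mathcal{E}$ to be the class of all cotorsion $kG$-modules, since the flat cotorsion modules (which are to play the role of the projective-injectives) live inside ${\tt Cotor}(kG)$, and the relevant cotorsion pair is the flat cotorsion pair restricted appropriately. More precisely, by Gillespie's work the pair $({\tt Flat}(kG),{\tt Cotor}(kG))$ is a complete hereditary cotorsion pair in $kG$-Mod; intersecting with ${\tt Cotor}(kG)$, the pair $(\mathcal{A},\mathcal{B})=\bigl({\tt Flat}(kG)\cap{\tt Cotor}(kG),\,{\tt Cotor}(kG)\bigr)$ is a complete hereditary cotorsion pair in $\mathcal{E}={\tt Cotor}(kG)$ whose kernel $\mathcal{A}\cap\mathcal{B}$ is exactly the class of flat cotorsion modules. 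This is the analogue of the cotorsion pair $({\tt Cof}(kG),{\tt Cof}(kG)^\perp)$ used in Proposition \ref{prop:cof-PGF}.

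Next I would identify the two larger classes. Set $\mathcal{D}=\mathcal{E}={\tt Cotor}(kG)$, and let $\mathcal{C}={\tt GFlat}(kG)\cap{\tt Cotor}(kG)$. I then need to check the structural hypotheses of $\S4$.I: that $\mathcal{C}$ contains $\mathcal{A}$ and is closed under extensions and kernels of epimorphisms inside $\mathcal{E}$, and that $\mathcal{D}$ contains $\mathcal{B}$ and is closed under extensions and cokernels of monomorphisms. The containment ${\tt Flat}(kG)\cap{\tt Cotor}(kG)\subseteq{\tt GFlat}(kG)\cap{\tt Cotor}(kG)$ is clear since flat modules are Gorenstein flat. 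Closure of ${\tt GFlat}(kG)\cap{\tt Cotor}(kG)$ under extensions and under kernels of epimorphisms (within cotorsion modules) is standard: Gorenstein flat modules over any ring are closed under extensions and, because the Gorenstein flat cotorsion pair is hereditary, under kernels of epimorphisms with cotorsion kernel; combined with the obvious closure properties of ${\tt Cotor}(kG)$ this gives what we need. Since $\mathcal{D}=\mathcal{E}$, the conditions on $\mathcal{D}$ are automatic. Finally I would invoke the preceding Lemma, which already records precisely that the three intersection categories
\[ \mathcal{A}\cap\mathcal{D}={\tt Flat}(kG)\cap{\tt Cotor}(kG), \quad
   \mathcal{C}\cap\mathcal{D}={\tt GFlat}(kG)\cap{\tt Cotor}(kG), \quad
   \mathcal{C}\cap\mathcal{B}={\tt GFlat}(kG)\cap{\tt Cof.flat}(kG)^\perp \]
are Frobenius with projective-injectives the flat cotorsion modules (note $\mathcal{C}\cap\mathcal{B}={\tt GFlat}(kG)\cap{\tt Cotor}(kG)\cap{\tt Cotor}(kG)$ is not quite what we want, so the subtlety is that $\mathcal{B}$ should be taken to be ${\tt Cof.flat}(kG)^\perp$ rather than ${\tt Cotor}(kG)$ — see below). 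With all hypotheses verified, Theorem \ref{thm:loc-seq} delivers the localization sequence and the quotient equivalence verbatim.

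The delicate point, and the one I expect to require the most care, is the choice of $\mathcal{B}$: to make $\mathcal{C}\cap\mathcal{B}$ come out as ${\tt GFlat}(kG)\cap{\tt Cof.flat}(kG)^\perp$ I should run the general setup with $\mathcal{A}={\tt Flat}(kG)\cap{\tt Cotor}(kG)$ (equivalently ${\tt Cof.flat}(kG)\cap{\tt Cotor}(kG)$, since over $kG$ a cotorsion module is flat iff it is cofibrant-flat — this uses that ${\tt Cof.flat}(kG)^\perp\subseteq{\tt Cotor}(kG)$ recorded above together with the relevant orthogonality), $\mathcal{B}={\tt Cof.flat}(kG)^\perp$, $\mathcal{C}={\tt GFlat}(kG)$, and $\mathcal{D}={\tt Cotor}(kG)$. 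Then $\mathcal{A}\cap\mathcal{D}={\tt Cof.flat}(kG)\cap{\tt Cotor}(kG)$, $\mathcal{C}\cap\mathcal{D}={\tt GFlat}(kG)\cap{\tt Cotor}(kG)$, and $\mathcal{C}\cap\mathcal{B}={\tt GFlat}(kG)\cap{\tt Cof.flat}(kG)^\perp$, matching the three categories in the statement. I must then double-check that $({\tt Cof.flat}(kG)\cap{\tt Cotor}(kG),\,{\tt Cof.flat}(kG)^\perp)$ is indeed a complete hereditary cotorsion pair in $\mathcal{E}={\tt Cotor}(kG)$ with kernel the flat cotorsion modules, and that $\mathcal{C}={\tt GFlat}(kG)$ contains $\mathcal{A}$ and is closed under extensions and kernels of epimorphisms, and $\mathcal{D}={\tt Cotor}(kG)$ contains $\mathcal{B}$ and is closed under cokernels of monomorphisms (the last using \cite[Proposition 2.9]{ER2}, as in the preceding Lemma). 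Granting these bookkeeping verifications, the proof is a one-line appeal to Theorem \ref{thm:loc-seq}, and I would write it up in that compressed form, with a sentence or two spelling out the dictionary $(\mathcal{A},\mathcal{B},\mathcal{C},\mathcal{D})$ and pointing to the preceding Lemma for the Frobenius structures.
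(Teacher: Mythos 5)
Your overall strategy---instantiating Theorem \ref{thm:loc-seq} with a suitable dictionary $(\mathcal{A},\mathcal{B},\mathcal{C},\mathcal{D})$, as in Proposition \ref{prop:cof-PGF}---is exactly the paper's, but your final dictionary rests on a false identification. You set $\mathcal{A}={\tt Flat}(kG)\cap{\tt Cotor}(kG)$ and obtain $\mathcal{A}\cap\mathcal{D}={\tt Cof.flat}(kG)\cap{\tt Cotor}(kG)$ by asserting that a cotorsion module is flat if and only if it is cofibrant-flat. That is not true: what holds is ${\tt Cof.flat}(kG)\cap{\tt Cof.flat}(kG)^{\perp}={\tt Flat}(kG)\cap{\tt Cotor}(kG)$ (the kernel of the cotorsion pair), and since ${\tt Cof.flat}(kG)^{\perp}$ is in general a proper subclass of ${\tt Cotor}(kG)$, the class ${\tt Cof.flat}(kG)\cap{\tt Cotor}(kG)$ is in general strictly larger than the flat cotorsion modules. (For $G$ finite and $k$ a field of characteristic dividing $|G|$, every module is cofibrant-flat and every finite-dimensional module is cotorsion, so any non-projective finite-dimensional module is a counterexample.) If your equivalence were correct, the first term $\underline{{\tt Cof.flat}\cap{\tt Cotor}}(kG)$ of the localization sequence would consist entirely of projective-injective objects and would be the zero category, trivializing the statement. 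Worse, with the literal choice $\mathcal{A}={\tt Flat}(kG)\cap{\tt Cotor}(kG)$ and $\mathcal{B}={\tt Cof.flat}(kG)^{\perp}$, the pair $(\mathcal{A},\mathcal{B})$ is not a cotorsion pair at all, since $^{\perp}\mathcal{B}={\tt Cof.flat}(kG)$; so the hypotheses of Theorem \ref{thm:loc-seq} simply fail for your data.

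The correct dictionary, which is the paper's, is: $\mathcal{E}$ the class of \emph{all} $kG$-modules, $(\mathcal{A},\mathcal{B})=({\tt Cof.flat}(kG),{\tt Cof.flat}(kG)^{\perp})$ (a complete hereditary cotorsion pair with kernel the flat cotorsion modules, by \cite[Theorem 2.8]{ER2}), $\mathcal{C}={\tt GFlat}(kG)$ and $\mathcal{D}={\tt Cotor}(kG)$. The point you missed is that the first term of the sequence is $\mathcal{A}\cap\mathcal{D}$, not $\mathcal{A}$ itself, so one should take $\mathcal{A}$ to be the full class of cofibrant-flat modules and let the intersection with $\mathcal{D}$ produce ${\tt Cof.flat}(kG)\cap{\tt Cotor}(kG)$; the three Frobenius structures are then supplied by the Lemma immediately preceding the Proposition. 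This also disposes of your other complication: shrinking the ambient exact category to ${\tt Cotor}(kG)$ is unnecessary and would force you to re-verify that the restricted pair is still a complete cotorsion pair (in particular that $(\mathcal{A}\cap{\tt Cotor}(kG))^{\perp}\cap{\tt Cotor}(kG)$ equals ${\tt Cof.flat}(kG)^{\perp}$), a verification you dismiss as bookkeeping but which is not automatic. The remaining checks you list (closure of ${\tt GFlat}(kG)$ under extensions and kernels of epimorphisms via \cite[Corollary 4.12]{SS}, and ${\tt Cof.flat}(kG)^{\perp}\subseteq{\tt Cotor}(kG)$) are correct and are indeed all that is needed once the dictionary is fixed.
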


\begin{proof}
It follows from \cite[Theorem 2.8]{ER2} that
$\left( {\tt Cof.flat}(kG),{\tt Cof.flat}(kG)^{\perp} \right)$ is a
complete hereditary cotorsion pair, whose kernel is the class of flat
cotorsion modules. We also note that the class ${\tt GFlat}(kG)$ is
closed under extensions and kernels of epimorphisms; cf.\ \cite[Corollary
4.12]{SS}. Letting $\mathcal{E}$ be the class of all $kG$-modules,
$\mathcal{C}$ the class of Gorenstein flat modules and $\mathcal{D}$
the class of cotorsion modules, the discussion above shows that all of
the hypotheses in the beginning of $\S $4.I are satisfied. The result
is therefore a particular case of Theorem \ref{thm:loc-seq}.
\end{proof}

\begin{Corollary}
The following conditions are equivalent for the group algebra $kG$:

(i) ${\tt PGF}(kG) = {\tt Cof}(kG)$.

(ii) ${\tt GFlat}(kG) = {\tt Cof.flat}(kG)$.

(iii) ${\tt GFlat}(kG)\cap{\tt Cotor}(kG) = {\tt Cof.flat}(kG)\cap {\tt Cotor}(kG)$.

(iv) ${\tt PGF}(kG) \cap {\tt Cof}(kG)^\perp = {\tt Proj}(kG)$.

(v) ${\tt GFlat}(kG)\cap {\tt Cof.flat}(kG)^\perp = {\tt Flat}(kG) \cap {\tt Cotor}(kG)$.
\end{Corollary}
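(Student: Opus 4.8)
The plan is to prove the five equivalences by exploiting the two localization sequences established in Propositions \ref{prop:cof-PGF} and \ref{prop:CF-GF}, together with the standard dictionary between triviality of a Verdier quotient and equality of the relevant module classes. First I would observe that, by Theorem \ref{thm:loc-seq} applied in each setting, the functor $j^*$ induces triangle-equivalences
\[ \underline{\tt PGF}(kG)/\underline{\tt Cof}(kG) \simeq \underline{{\tt PGF}\cap{\tt Cof}^\perp}(kG)
   \quad\text{and}\quad
   \underline{{\tt GFlat}\cap{\tt Cotor}}(kG)/\underline{{\tt Cof.flat}\cap{\tt Cotor}}(kG) \simeq \underline{{\tt GFlat}\cap{\tt Cof.flat}^\perp}(kG). \]
Now (i)$\leftrightarrow$(iv): if ${\tt PGF}(kG)={\tt Cof}(kG)$ then $i_*$ is an equivalence, so the quotient on the left vanishes, hence $\underline{{\tt PGF}\cap{\tt Cof}^\perp}(kG)=0$, which means every object of that Frobenius category is projective-injective, i.e.\ ${\tt PGF}(kG)\cap{\tt Cof}(kG)^\perp={\tt Proj}(kG)$; conversely, if (iv) holds, the right-hand category is $0$, so $i_*$ is a quotient-equivalence onto $\underline{\tt PGF}(kG)$, but $i_*$ is the inclusion induced by ${\tt Cof}(kG)\subseteq{\tt PGF}(kG)$, and one checks on objects that this forces ${\tt PGF}(kG)={\tt Cof}(kG)$ (a $kG$-module $M\in{\tt PGF}(kG)$ sits in a short exact sequence $0\to B\to A\to M\to 0$ with $A$ cofibrant and $B\in{\tt PGF}(kG)\cap{\tt Cof}(kG)^\perp={\tt Proj}(kG)$, so $M$ is a cokernel of a map of cofibrant modules and hence cofibrant, since ${\tt Cof}(kG)$ is closed under cokernels of admissible monomorphisms with projective kernel). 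The same argument verbatim, with $({\tt Cof.flat},{\tt Cotor})$ in place of $({\tt Cof},\text{all modules})$, gives (iii)$\leftrightarrow$(v).

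Next I would connect the flat picture to the module-class statements (ii) and (iii). The implication (ii)$\rightarrow$(iii) is trivial. For (iii)$\rightarrow$(ii), I would use that every Gorenstein flat module $M$ embeds in a flat cotorsion module with cokernel again Gorenstein flat (the flat cotorsion modules form the projective-injective objects of ${\tt GFlat}(kG)\cap{\tt Cotor}(kG)$), and dually, by the completeness of the cotorsion pair $({\tt Cof.flat},{\tt Cof.flat}^\perp)$, $M$ sits in $0\to M\to B\to A\to 0$ with $B$ cotorsion; if (iii) holds then $B\in{\tt Cof.flat}(kG)$ forces (via $M\otimes_k B$-type arguments, or rather: ${\tt Cof.flat}$ is closed under kernels of epimorphisms with flat-cotorsion kernel) $M\in{\tt Cof.flat}(kG)$. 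The hardest link is (i)$\leftrightarrow$(ii): I expect to argue that ${\tt PGF}(kG)={\tt Cof}(kG)$ and ${\tt GFlat}(kG)={\tt Cof.flat}(kG)$ are equivalent by relating ${\tt PGF}$ and ${\tt GFlat}$ in the same way cofibrant relates to cofibrant-flat — namely, a $kG$-module is cofibrant iff it is cofibrant-flat and of finite projective dimension over $k$ in an appropriate sense, and similarly ${\tt PGF}(kG)={\tt GFlat}(kG)\cap{\tt PGF}(kG)$ always while the reverse containment is automatic; more concretely, ${\tt PGF}\subseteq{\tt GFlat}$ and ${\tt Cof}\subseteq{\tt Cof.flat}$ always hold, ${\tt Cof}={\tt PGF}\cap{\tt Cof.flat}$ and ${\tt GFlat}\cap{\tt Cof.flat}^\perp$ versus ${\tt PGF}\cap{\tt Cof}^\perp$ can be compared, so that (iv) and (v) translate into one another.

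Putting it together, I would prove the chain (i)$\leftrightarrow$(iv), (iv)$\leftrightarrow$(v), (v)$\leftrightarrow$(iii), (iii)$\leftrightarrow$(ii), thereby closing the loop; alternatively one can route (i)$\leftrightarrow$(ii) directly using the known relationships between the projective/flat versions of cofibrant modules and of Gorenstein projective/flat modules over group algebras, and then obtain the remaining equivalences from the two localization sequences as above. The main obstacle I anticipate is the precise passage between the ``projective'' world (${\tt PGF}$, ${\tt Cof}$, ${\tt Cof}^\perp$) and the ``flat'' world (${\tt GFlat}$, ${\tt Cof.flat}$, ${\tt Cof.flat}^\perp$, with the cotorsion condition ${\tt Cotor}$ built in): one must carefully use that the cotorsion Gorenstein flat modules model $\underline{\tt GFlat}$ (or the relevant stable category) up to the flat–cotorsion projective-injectives, and that localizing at flat cotorsion modules does not lose information relevant to membership in ${\tt Cof.flat}(kG)$. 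Granting the structural inputs cited in the proofs of Propositions \ref{prop:cof-PGF} and \ref{prop:CF-GF} (closure properties of ${\tt PGF}$, ${\tt GFlat}$, completeness of the two cotorsion pairs, and the Frobenius structures), all the equivalences reduce to the triviality-of-quotient bookkeeping outlined above, plus a short module-theoretic argument on short exact sequences from the cotorsion pairs.
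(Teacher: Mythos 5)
Your use of the two localization sequences is essentially the paper's strategy, and the arguments you give for (i)$\leftrightarrow$(iv), (iii)$\leftrightarrow$(v) and (ii)$\leftrightarrow$(iii) are sound (for (iv)$\rightarrow$(i) the cleanest finish is that the sequence $0\to B\to A\to M\to 0$ with $B$ projective splits, since a PGF module is Gorenstein projective and so ${\rm Ext}^1_{kG}(M,B)=0$, whence $M$ is a direct summand of the cofibrant module $A$; the paper instead deduces ${\tt PGF}(kG)\subseteq{\tt Cof}(kG)$ from the equality of stable categories using closure of ${\tt Cof}(kG)$ under direct sums and summands; in your (iii)$\rightarrow$(ii) you should also note that the cotorsion module $B$ in $0\to M\to B\to A\to 0$ is Gorenstein flat by extension-closure before condition (iii) can be applied to it).

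The genuine gap is the bridge between the ``projective'' cluster $\{(\mathrm{i}),(\mathrm{iv})\}$ and the ``flat'' cluster $\{(\mathrm{ii}),(\mathrm{iii}),(\mathrm{v})\}$. Your arguments, as written, only establish equivalences within each cluster, and the two candidate bridges you mention are both left unproven: the direct link (i)$\leftrightarrow$(ii) is dismissed with heuristic claims such as ${\tt Cof}(kG)={\tt PGF}(kG)\cap{\tt Cof.flat}(kG)$ and ``a module is cofibrant iff it is cofibrant-flat and of finite projective dimension over $k$ in an appropriate sense'', neither of which is obvious (the first would require, e.g., knowing that $-\otimes_k B$ sends PGF modules to PGF modules, so that flatness of $M\otimes_k B$ upgrades to projectivity); and the alternative link (iv)$\leftrightarrow$(v) is only asserted via a vague comparison of ${\tt PGF}\cap{\tt Cof}^\perp$ with ${\tt GFlat}\cap{\tt Cof.flat}^\perp$. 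The paper does not prove (i)$\leftrightarrow$(ii) internally either: it quotes it as \cite[Proposition 3.6]{ER2}, a nontrivial result from a companion paper. So either you must invoke that external result explicitly, or you must supply an actual argument for one of the two bridges; as it stands the five conditions are not shown to be equivalent.
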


\begin{proof}
The equivalence (i)$\leftrightarrow$(ii) is precisely
\cite[Proposition 3.6]{ER2} and the implication
(ii)$\rightarrow$(iii) is clear. Since
${\tt Cof}(kG) \cap {\tt Cof}(kG)^\perp = {\tt Proj}(kG)$, it
follows that (i)$\rightarrow$(iv). The implication (iii)$\rightarrow$(v)
follows since ${\tt Cof.flat}(kG)^{\perp} \! \, \subseteq {\tt Cotor}(kG)$
and
${\tt Cof.flat}(kG) \cap {\tt Cof.flat}(kG)^\perp =
 {\tt Flat}(kG) \cap {\tt Cotor}(kG)$.

(iv)$\rightarrow$(i): Assumption (iv) implies that the stable category
$\underline{{\tt PGF}\cap{\tt Cof}^\perp}(kG)$ is trivial. Invoking
Proposition \ref{prop:cof-PGF}, we then conclude that
$\underline{\tt PGF}(kG) = \underline{\tt Cof}(kG)$. Since projective
modules are cofibrant, the closure of ${\tt Cof}(kG)$ under direct sums
and direct summands implies that ${\tt PGF}(kG) \subseteq {\tt Cof}(kG)$,
so that ${\tt PGF}(kG) = {\tt Cof}(kG)$.

(v)$\rightarrow$(iii): We may use Proposition \ref{prop:CF-GF} and work
as in the proof of the implication (iv)$\rightarrow$(i).

(iii)$\rightarrow$(ii): Let $M \in {\tt GFlat}(kG)$ and consider a
short exact sequence of $kG$-modules
\[ 0 \longrightarrow M\longrightarrow C\longrightarrow F\longrightarrow 0,\]
where $C$ is cotorsion and $F$ is flat. Then, $F$ is Gorenstein flat and
hence $C$ is also Gorenstein flat. Assumption (iii) implies that $C$ is
cofibrant-flat. Since $F$ is also cofibrant-flat, the closure of
${\tt Cof.flat}(kG)$ under kernels of epimorphisms implies that
$M \in {\tt Cof.flat}(kG)$.
\end{proof}

\noindent
A classical result, due to Bass \cite{Bass}, states that a ring is
perfect if and only if every flat module is projective. The following
result is a version of Bass' theorem, that involves the homological
behaviour of Gorenstein modules over group algebras.

\begin{Proposition}\label{prop:perf-ring}
The following conditions are equivalent for the group algebra $kG$:

(i) $kG$ is a perfect ring.

(ii) ${\tt Cof}(kG) = {\tt Cof.flat}(kG)$.

(iii) ${\tt PGF}(kG)= {\tt GFlat}(kG)$.

(iv) ${\tt Flat}(kG)\cap{\tt Cotor}(kG)= {\tt Proj}(kG)$.

(v) ${\tt Cof.flat}(kG)\cap{\tt Cotor}(kG)= {\tt Cof}(kG)$.
\end{Proposition}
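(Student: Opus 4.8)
The plan is to establish the cycle of implications (i)$\rightarrow$(iv)$\rightarrow$(v)$\rightarrow$(ii)$\rightarrow$(iii)$\rightarrow$(i), using the localization sequences of Propositions \ref{prop:cof-PGF} and \ref{prop:CF-GF} together with the known comparison results between the various classes of Gorenstein and cofibrant modules. The easy step is (i)$\rightarrow$(iv): by Bass's theorem \cite{Bass}, if $kG$ is perfect then every flat module is projective, so in particular ${\tt Flat}(kG)\cap{\tt Cotor}(kG)={\tt Flat}(kG)={\tt Proj}(kG)$.

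For (iv)$\rightarrow$(v), note that ${\tt Flat}(kG)\cap{\tt Cotor}(kG)$ is precisely the class of projective--injective objects of the Frobenius category ${\tt Cof.flat}(kG)\cap{\tt Cotor}(kG)$ (this is the kernel of the cotorsion pair $({\tt Cof.flat}(kG),{\tt Cof.flat}(kG)^\perp)$ used in the proof of Proposition \ref{prop:CF-GF}); on the other hand, the projective $kG$-modules are precisely the projective--injective objects of the Frobenius category ${\tt Cof}(kG)$. Assumption (iv) says these two classes of projective--injective objects coincide, which by the standard Eilenberg-swindle/stable-category argument forces the two exact categories to agree, i.e.\ ${\tt Cof.flat}(kG)\cap{\tt Cotor}(kG)={\tt Cof}(kG)$. (One direction, ${\tt Cof}(kG)\subseteq{\tt Cof.flat}(kG)\cap{\tt Cotor}(kG)$, is automatic since projective modules are flat cotorsion and cofibrant modules, being Gorenstein projective, are cotorsion; for the reverse one writes an $M\in{\tt Cof.flat}(kG)\cap{\tt Cotor}(kG)$ as a cokernel of an acyclic complex of flat cotorsion $=$ projective modules and observes this exhibits $M$ as cofibrant.)

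For (v)$\rightarrow$(ii): combining (v) with the known inclusion ${\tt Cof}(kG)\subseteq{\tt Cof.flat}(kG)$, one shows every cofibrant module is flat cotorsion hence projective, whence ${\tt Cof}(kG)={\tt Proj}(kG)$; and symmetrically every flat module, being a cofibrant-flat cotorsion module after a cotorsion envelope, turns out projective, so ${\tt Cof.flat}(kG)\subseteq{\tt Cof}(kG)$ and equality (ii) holds. The implication (ii)$\rightarrow$(iii) is the cleanest: by \cite[Proposition 3.6]{ER2} the equalities ${\tt PGF}(kG)={\tt Cof}(kG)$ and ${\tt GFlat}(kG)={\tt Cof.flat}(kG)$ are equivalent, and once ${\tt Cof}(kG)={\tt Cof.flat}(kG)$ these classes coincide; alternatively one invokes Proposition \ref{prop:cof-PGF} and Proposition \ref{prop:CF-GF} to see that the relevant Verdier quotients are identified. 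Finally, for (iii)$\rightarrow$(i), from ${\tt PGF}(kG)={\tt GFlat}(kG)$ one deduces in particular that every flat $kG$-module, which is always Gorenstein flat, lies in ${\tt PGF}(kG)$; a flat module that is projectively coresolved Gorenstein flat is a cokernel of an acyclic complex of projectives that stays acyclic under $-\otimes_{kG}I$, and a short argument (applying the flatness to split off, or using that a flat PGF module has a projective resolution by projectives which is itself contractible in low degrees) shows such a flat module is projective; by Bass's theorem $kG$ is then perfect.

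I expect the main obstacle to be the step (v)$\rightarrow$(ii) — more precisely, extracting from the coincidence of ${\tt Cof.flat}\cap{\tt Cotor}$ with ${\tt Cof}$ the conclusion that \emph{arbitrary} flat modules (not just flat cotorsion ones) are projective. The subtlety is that a flat module need not be cotorsion, so one must pass through a cotorsion envelope $0\to F\to C\to F'\to 0$ with $C$ flat cotorsion and $F'$ flat, argue $C$ is projective, and then control the flat dimension of $F$; making this bound uniform so as to apply Bass's characterization (rather than merely concluding finiteness of flat dimension module-by-module) is where care is needed, and it may be cleaner to route the final implication (iii)$\rightarrow$(i) directly and treat (v)$\rightarrow$(ii) as an intermediate consequence rather than proving the full equality by hand.
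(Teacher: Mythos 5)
The central gaps are in your steps (iv)$\rightarrow$(v), (v)$\rightarrow$(ii) and (ii)$\rightarrow$(iii). First, the inclusion ${\tt Cof}(kG)\subseteq{\tt Cof.flat}(kG)\cap{\tt Cotor}(kG)$ that you call ``automatic'' rests on the claim that cofibrant modules, being Gorenstein projective, are cotorsion; this is false. Already for $G$ trivial and $k=\mathbb{Z}$ the free module $\mathbb{Z}$ is cofibrant but not cotorsion, since ${\rm Ext}^1_{\mathbb{Z}}(\mathbb{Q},\mathbb{Z})\neq 0$. In fact ``projective $\Rightarrow$ cotorsion'' is \emph{equivalent} to perfectness by \cite[Corollary 10]{GAH}, so the inclusion you treat as free is essentially the conclusion being proved. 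The auxiliary principle you invoke --- that two Frobenius subcategories with the same projective--injective objects must coincide --- is also false: ${\tt Proj}(kG)$ and ${\tt GProj}(kG)$ both have the projectives as their projective--injective objects. In (v)$\rightarrow$(ii) you assert that under (v) every cofibrant module is \emph{flat} cotorsion; (v) only makes it cofibrant-flat and cotorsion, so the deduction ${\tt Cof}(kG)={\tt Proj}(kG)$ does not follow. And in (ii)$\rightarrow$(iii), knowing from \cite[Proposition 3.6]{ER2} that the two statements ``${\tt PGF}(kG)={\tt Cof}(kG)$'' and ``${\tt GFlat}(kG)={\tt Cof.flat}(kG)$'' are equivalent to one another does not allow you to conclude either of them from ``${\tt Cof}(kG)={\tt Cof.flat}(kG)$''; that is a non sequitur.

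The paper's argument avoids all of this and does not use the localization sequences at all. From (i), perfectness gives ${\tt Flat}(kG)={\tt Proj}(kG)$ and makes every module cotorsion, whence (ii)--(v) follow at once. For the converses, the single key fact is ${\tt Proj}(kG)={\tt Cof}(kG)\cap{\tt Flat}(kG)={\tt PGF}(kG)\cap{\tt Flat}(kG)$, which is \cite[Theorem 4.4]{SS} (this is the nontrivial input your sketch of (iii)$\rightarrow$(i) is re-deriving by hand; it should simply be cited). Intersecting each of the equalities (ii), (iii), (v) with ${\tt Flat}(kG)$ then yields ${\tt Proj}(kG)={\tt Flat}(kG)$ in cases (ii) and (iii) --- hence (i) by Bass --- and yields (iv) in case (v); finally (iv)$\rightarrow$(i) is exactly \cite[Corollary 10]{GAH}. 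Any repair of your cycle ends up routing each implication through (i) in this way, which collapses it to the paper's argument.
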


\begin{proof}
If $kG$ is perfect, then all flat $kG$-modules are projective and
every $kG$-module is cotorsion. It follows that
${\tt Cof.flat}(kG) \cap {\tt Cotor}(kG) = {\tt Cof.flat}(kG) =
 {\tt Cof}(kG)$
and hence it is clear that (i)$\rightarrow$(ii),(iii),(iv),(v).

(iv)$\rightarrow$(i): Equality (iv) implies that all projective
$kG$-modules are cotorsion. Then, the ring $kG$ is perfect, in
view of \cite[Corollary 10]{GAH}.

Since cofibrant modules are projectively coresolved Gorenstein
flat, \cite[Theorem 4.4]{SS} implies that
${\tt Proj}(kG) = {\tt Cof}(kG) \cap {\tt Flat}(kG) =
 {\tt PGF}(kG) \cap {\tt Flat}(kG)$.
Hence, the implications (ii)$\rightarrow$(i), (iii)$\rightarrow$(i)
and (v)$\rightarrow$(iv) follow by taking the intersection of the
respective equalities with ${\tt Flat}(kG)$.
\end{proof}

\noindent
{\sc III.\ A colocalization sequence.}
For the class ${\tt Fib}(kG)$ of fibrant modules, it follows
from \cite[Theorem 4.5 and Proposition 4.6]{ER2} that
$({^\perp{\tt Fib}}(kG), {\tt Fib}(kG))$ is a complete hereditary
cotorsion pair. On the other hand, it is standard to show that
${\tt GInj}(kG)$, ${\tt Fib}(kG)$ and
${\tt GInj}(kG)\cap {^\perp{\tt Fib}}(kG)$ are Frobenius categories
with projective-injective objects the injective $kG$-modules.

For any $M \in {\tt GInj}(kG)$ there exists a short exact
sequence of $kG$-modules
\[ 0 \longrightarrow M \longrightarrow N \longrightarrow L
     \longrightarrow 0 , \]
where $N \in {\tt Fib}(kG)$ and
$L \in {\tt GInj}(kG) \cap {^\perp}{\tt Fib}(kG)$.
Working as in Lemma \ref{lem:maps}, we can show that $N$ is
uniquely determined by $M$, up to a canonical isomorphism in
the stable category $\underline{\tt Fib}(kG)$, and the
assignment $M \mapsto N$ defines a functor
\[ i^* : \underline{\tt GInj}(kG) \longrightarrow
         \underline{\tt Fib}(kG), \]
which is clearly additive. In fact,
$i^* : \underline{\tt GInj}(kG) \longrightarrow
       \underline{\tt Fib}(kG)$
is left adjoint to the inclusion functor
$i_* : \underline{\tt Fib}(kG) \longrightarrow
       \underline{\tt GInj}(kG)$
and the composition $i^* \circ i_*$ is the identity on
$\underline{\tt Fib}(kG)$.

Similarly, for any $M \in {\tt GInj}(kG)$ there exists
another short exact sequence of $kG$-modules
\[ 0 \longrightarrow K \longrightarrow J \longrightarrow M
     \longrightarrow 0 , \]
where $K \in {\tt Fib}(kG)$ and
$J \in {\tt GInj}(kG) \cap {^\perp}{\tt Fib}(kG)$. Then, $J$
is uniquely determined by $M$, up to a canonical isomorphism
in the stable category
$\underline{{\tt GInj} \cap {^\perp}{\tt Fib}}(kG)$, and the
assignment $M \mapsto J$ defines an additive functor
\[ j^*: \underline{\tt GInj}(kG) \longrightarrow
        \underline{{\tt GInj} \cap {^\perp}{\tt Fib}}(kG) , \]
which is right adjoint to the inclusion
$j_!: \underline{{\tt GInj} \cap {^\perp}{\tt Fib}}(kG)
\longrightarrow \underline{\tt GInj}(kG)$. The composition
$j^* \circ j_!$ is the identity on
$\underline{{\tt GInj} \cap {^\perp}{\tt Fib}}(kG)$ and
$j^*M=0 \in \underline{{\tt GInj} \cap {^\perp}{\tt Fib}}(kG)$
if and only if $M$ is fibrant.

We may summarize the discussion above in the form of the
following result, which establishes the existence of a
colocalization sequence of triangulated categories; cf.\ \cite{N2, V}.

\begin{Theorem}\label{thm:col-seq}
(i) The functors defined above induce a colocalization sequence
\[ \underline{\tt Fib}(kG) \stackrel{i_*}{\longrightarrow}
   \underline{\tt GInj}(kG) \stackrel{j^*}{\longrightarrow}
   \underline{{\tt GInj }\cap{^\perp}{\tt Fib}}(kG). \]
The left adjoint of the inclusion $i_*$ is
$i^* : \underline{\tt GInj}(kG)\longrightarrow
       \underline{\tt Fib}(kG)$
and the left adjoint of $j^*$ is the inclusion
$j_! : \underline{{\tt GInj} \cap {^\perp}{\tt Fib}}(kG)
       \longrightarrow \underline{\tt GInj}(kG)$.

(ii) The functor $j^*$ induces an equivalence of triangulated
categories
\[ \underline{\tt GInj}(kG)/\underline{\tt Fib}(kG)
   \stackrel{\sim}{\longrightarrow}
   \underline{{\tt GInj} \cap {^\perp}{\tt Fib}}(kG). \]

(iii) A Gorenstein injective $kG$-module $M$ is fibrant if
and only if ${\underline{\rm Hom}}_{kG}(L, M)=0$ for any
$L \in {\tt GInj}(kG) \cap {^\perp}{\tt Fib}(kG)$.
\end{Theorem}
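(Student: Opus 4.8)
\emph{Approach.} The plan is to recognize this result as the formal dual of Theorem \ref{thm:loc-seq}: it is obtained by applying that theorem in the opposite category of $kG$-modules, to the complete hereditary cotorsion pair $({\tt Fib}(kG),{^\perp}{\tt Fib}(kG))$ --- whose kernel is the class of injective modules --- taking $\mathcal{C}={\tt GInj}(kG)$ and $\mathcal{D}$ the class of all $kG$-modules. The hypotheses listed at the beginning of Subsection~I then amount precisely to the facts assembled in the discussion preceding the statement: the Frobenius structures on ${\tt GInj}(kG)$, ${\tt Fib}(kG)$ and ${\tt GInj}(kG)\cap{^\perp}{\tt Fib}(kG)$ with projective-injective objects the injectives, the relevant closure properties of ${\tt GInj}(kG)$, and the construction of the functors $i^*$ and $j^*$ together with the adjunctions $i^*\dashv i_*$ and $j_!\dashv j^*$ and the identities $i^*i_*={\rm id}$, $j^*j_!={\rm id}$. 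A localization sequence in the opposite category is, read back in $kG$-{\rm Mod}, a colocalization sequence, the right adjoints becoming left adjoints; this yields parts (i) and (ii). Below I also indicate the direct argument, since the ingredients are already in place.

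\emph{Part (i).} I would first observe that $i_*$ is the inclusion of the full triangulated subcategory $\underline{\tt Fib}(kG)$ of $\underline{\tt GInj}(kG)$, hence fully faithful, and similarly that $j_!$ is a fully faithful inclusion; the discussion above supplies the left adjoint $i^*$ to $i_*$ and exhibits the inclusion $j_!$ as a left adjoint to $j^*$, and both $i^*$ and $j^*$ are triangulated as adjoints of triangulated functors (cf.\ \cite[Lemma 5.3.6]{N1}). Next I would identify $\ker j^*$: it was shown above that for $M\in{\tt GInj}(kG)$ one has $j^*M=0$ in $\underline{{\tt GInj}\cap{^\perp}{\tt Fib}}(kG)$ if and only if $M$ is fibrant, so $\ker j^*=\underline{\tt Fib}(kG)={\rm im}\,i_*$. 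Since $j^*$ is a triangulated functor with a fully faithful left adjoint $j_!$ satisfying $j^*j_!\cong{\rm id}$, it exhibits $\underline{{\tt GInj}\cap{^\perp}{\tt Fib}}(kG)$ as the Verdier quotient of $\underline{\tt GInj}(kG)$ by $\ker j^*$; this is exactly the assertion that the displayed sequence is a colocalization sequence with the stated adjoints.

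\emph{Parts (ii) and (iii).} Part (ii) is then immediate: the functor induced by $j^*$ identifies $\underline{\tt GInj}(kG)/\underline{\tt Fib}(kG)$ with $\underline{{\tt GInj}\cap{^\perp}{\tt Fib}}(kG)$ (equivalently, this is the last assertion of Theorem \ref{thm:loc-seq} read in the opposite category). For part (iii), I would use the adjunction isomorphism ${\underline{\rm Hom}}_{kG}(L,M)={\underline{\rm Hom}}_{kG}(j_!L,M)\cong{\underline{\rm Hom}}_{kG}(L,j^*M)$, natural in $L\in{\tt GInj}(kG)\cap{^\perp}{\tt Fib}(kG)$. If $M$ is fibrant then $j^*M=0$, so the right-hand side vanishes for every such $L$; conversely, if it vanishes for every $L$, then taking $L=j^*M$ forces ${\rm id}_{j^*M}=0$, hence $j^*M=0$, hence $M$ is fibrant by the criterion recalled above.

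\emph{Main obstacle.} In truth no serious obstacle remains at this stage: the substantive steps --- building $i^*$ and $j^*$ from the approximation short exact sequences of the cotorsion pair, checking that they descend to the stable categories and are adjoint to the inclusions, establishing the Frobenius structures, and proving the criterion $j^*M=0\Leftrightarrow M$ fibrant --- have all been carried out in the discussion that precedes the statement (cf.\ also \cite{N2, V}). What a proof must still spell out is only formal: full faithfulness of the two inclusions, the identification of $\ker j^*$, and the standard fact that a triangulated functor possessing a fully faithful adjoint realizes its target as a Verdier quotient.
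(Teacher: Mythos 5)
Your proposal is correct and matches the paper's treatment: the paper gives no separate proof of this theorem, presenting it as a summary of the preceding discussion, which is exactly the dualization of Subsection I (equivalently, Theorem \ref{thm:loc-seq} applied in the opposite category to the cotorsion pair $({^\perp}{\tt Fib}(kG),{\tt Fib}(kG))$ with $\mathcal{C}={\tt GInj}(kG)$) that you describe. Your explicit adjunction argument for part (iii), via $\underline{\rm Hom}(j_!L,M)\cong\underline{\rm Hom}(L,j^*M)$ and the criterion $j^*M=0\Leftrightarrow M$ fibrant, is the intended one and fills in the only step the paper leaves entirely implicit.
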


\begin{Corollary}
The following conditions are equivalent for the group algebra
$kG$:

(i) ${\tt Fib}(kG) = {\tt GInj}(kG)$,

(ii) ${\tt GInj}(kG) \cap {\tt {^\perp}Fib}(kG) =
      {\tt Inj}(kG)$,

(iii) ${\tt GInj}(kG) \cap {\tt {^\perp}Fib}(kG)
       \subseteq {\tt Fib}(kG)$,

(iv) ${\tt GInj}(kG) \cap {\tt {^\perp}Fib}(kG)
      \subseteq {^\perp}{\tt GInj}(kG)$.
\end{Corollary}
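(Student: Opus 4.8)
The plan is to prove the cyclic chain of implications
(i)$\Rightarrow$(ii)$\Rightarrow$(iii)$\Rightarrow$(iv)$\Rightarrow$(i). The
whole argument rests on the colocalization sequence of Theorem
\ref{thm:col-seq} together with a handful of facts recorded just before it:
the cotorsion pair $({^\perp{\tt Fib}}(kG), {\tt Fib}(kG))$ is complete and
hereditary, its kernel ${^\perp{\tt Fib}}(kG) \cap {\tt Fib}(kG)$ is the class
${\tt Inj}(kG)$ of injective modules, and
${\tt Inj}(kG) \subseteq {\tt Fib}(kG) \subseteq {\tt GInj}(kG)$. I will also
use two standard facts from Gorenstein homological algebra: first,
${\tt Inj}(kG) \subseteq {^\perp{\tt GInj}}(kG)$, which holds because the
complete injective resolution defining a Gorenstein injective module stays
exact after applying ${\rm Hom}_{kG}(E,-)$ for any injective $E$; and second,
${\tt GInj}(kG) \cap {^\perp{\tt GInj}}(kG) = {\tt Inj}(kG)$, since a Gorenstein
injective module $W$ with ${\rm Ext}^1_{kG}(W,-)$ vanishing on ${\tt GInj}(kG)$
splits off the injective term of its complete resolution and is therefore
injective.

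For (i)$\Rightarrow$(ii): if ${\tt Fib}(kG) = {\tt GInj}(kG)$, then
${\tt GInj}(kG) \cap {^\perp{\tt Fib}}(kG) = {\tt Fib}(kG) \cap {^\perp{\tt Fib}}(kG)$,
which is precisely the kernel ${\tt Inj}(kG)$ of the cotorsion pair. The
implication (ii)$\Rightarrow$(iii) is immediate from
${\tt Inj}(kG) \subseteq {\tt Fib}(kG)$. For (iii)$\Rightarrow$(iv): if
${\tt GInj}(kG) \cap {^\perp{\tt Fib}}(kG) \subseteq {\tt Fib}(kG)$, then this
intersection is contained in
${\tt Fib}(kG) \cap {^\perp{\tt Fib}}(kG) = {\tt Inj}(kG)$, and since
${\tt Inj}(kG) \subseteq {^\perp{\tt GInj}}(kG)$, condition (iv) follows at once.

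The one step with genuine content is (iv)$\Rightarrow$(i). Assuming (iv), every
module of ${\tt GInj}(kG) \cap {^\perp{\tt Fib}}(kG)$ lies in
${\tt GInj}(kG) \cap {^\perp{\tt GInj}}(kG) = {\tt Inj}(kG)$, so the Frobenius
category ${\tt GInj}(kG) \cap {^\perp{\tt Fib}}(kG)$ consists entirely of
projective-injective objects and its stable category
$\underline{{\tt GInj}\cap {^\perp{\tt Fib}}}(kG)$ is trivial. Now fix an
arbitrary Gorenstein injective $kG$-module $M$. For every
$L \in {\tt GInj}(kG) \cap {^\perp{\tt Fib}}(kG)$ we have $L \in {\tt Inj}(kG)$,
so any $kG$-linear map $L \to M$ factors through an injective module and hence
$\underline{\rm Hom}_{kG}(L, M) = 0$. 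By Theorem \ref{thm:col-seq}(iii) the
module $M$ is fibrant, so ${\tt GInj}(kG) \subseteq {\tt Fib}(kG)$; combined
with the always valid inclusion ${\tt Fib}(kG) \subseteq {\tt GInj}(kG)$ this
yields (i). One could instead argue from Theorem \ref{thm:col-seq}(ii) --
triviality of $\underline{{\tt GInj}\cap {^\perp{\tt Fib}}}(kG)$ forces the
inclusion $\underline{\tt Fib}(kG) \hookrightarrow \underline{\tt GInj}(kG)$ to
be an equivalence -- but then passing back from stable categories to module
classes would require the closure of ${\tt Fib}(kG)$ under direct sums and
summands, which the route through part (iii) neatly avoids.

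I do not expect a real obstacle here: apart from the two standard
Gorenstein-homological facts isolated in the first paragraph, the proof is a
formal manipulation of the cotorsion pair $({^\perp{\tt Fib}}(kG),{\tt Fib}(kG))$
and of the colocalization sequence. The point most in need of care is simply
making sure that the two ``kernels'' ${^\perp{\tt Fib}}(kG)\cap {\tt Fib}(kG)$
and ${^\perp{\tt GInj}}(kG)\cap {\tt GInj}(kG)$ are correctly identified with
${\tt Inj}(kG)$ before the chain of implications is closed.
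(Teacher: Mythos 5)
Your proof is correct. It differs from the paper's in how the cycle of implications is closed: the paper proves (i)$\leftrightarrow$(ii) by observing that ${\tt Fib}(kG)={\tt GInj}(kG)$ holds precisely when $\underline{{\tt Fib}}(kG)=\underline{{\tt GInj}}(kG)$ (which requires the closure of ${\tt Fib}(kG)$ under finite direct sums and summands) and then invoking the triangle-equivalence of Theorem \ref{thm:col-seq}(ii), after which (ii)$\rightarrow$(iii),(iv) are clear and (iii)$\rightarrow$(ii), (iv)$\rightarrow$(ii) follow from the two kernel identities ${\tt Fib}\cap{^\perp}{\tt Fib}={\tt Inj}$ and ${\tt GInj}\cap{^\perp}{\tt GInj}={\tt Inj}$. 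You instead run the chain (i)$\Rightarrow$(ii)$\Rightarrow$(iii)$\Rightarrow$(iv)$\Rightarrow$(i) and put the real work into (iv)$\Rightarrow$(i), deducing from (iv) that every $L\in{\tt GInj}(kG)\cap{^\perp}{\tt Fib}(kG)$ is injective and then applying the Hom-vanishing criterion of Theorem \ref{thm:col-seq}(iii). Both routes rest on the same colocalization sequence and the same kernel identifications; yours uses part (iii) of the theorem where the paper uses part (ii), and, as you note, this sidesteps the passage from an equality of stable categories back to an equality of module classes. The auxiliary facts you isolate (${\tt Inj}\subseteq{^\perp}{\tt GInj}$ and ${\tt GInj}\cap{^\perp}{\tt GInj}={\tt Inj}$) are standard and correctly justified, and the latter is used by the paper as well.
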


\begin{proof}
We note that ${\tt Fib}(kG)={\tt GInj}(kG)$ if and only if
$\underline{{\tt Fib}}(kG) = \underline{{\tt GInj}}(kG)$;
this follows since injective modules are fibrant, whereas
${\tt Fib}(kG)$ is closed under finite direct sums and direct
summands. Hence, the equivalence (i)$\leftrightarrow$(ii)
follows from Theorem \ref{thm:col-seq}(ii).

It is clear that (ii)$\rightarrow$(iii), (iv). The implications
(iii)$\rightarrow$(ii) and (iv)$\rightarrow$(ii) follow easily,
since ${\tt Fib}(kG) \cap {^\perp}{\tt Fib}(kG) = {\tt Inj}(kG)$
and ${\tt GInj}(kG) \cap {^\perp}{\tt GInj}(kG) = {\tt Inj}(kG)$.
\end{proof}

\vspace{0.1in}

\noindent
{\bf Author Contributions}
I. Emmanouil and W. Ren have contributed equally to this work.
\vspace{0.1in}

\noindent
{\bf Funding}
I.\ Emmanouil was supported by the Hellenic Foundation for
Research and Innovation (H.F.R.I.) under the "3rd Call for
H.F.R.I.\ Research Projects to Support Faculty Members and
Researchers", project number 24921. W.\ Ren is supported by
the Natural Science Foundation of Chongqing, China (No.\
CSTB2025NSCQ-GPX1014).
\vspace{0.1in}

\noindent
{\bf Data Availability}	No datasets	were generated or analysed during the current study.
\vspace{0.2in}

\noindent
{\bf Declarations}

\noindent
{\bf Conflict of interest} The authors declare no conflict of interest.

\vspace{0.1in}

\vspace{0.1in}

{\footnotesize \noindent Ioannis Emmanouil\\
Department of Mathematics, University of Athens, Athens 15784, Greece\\
E-mail: {\tt emmanoui$\symbol{64}$math.uoa.gr}}

\vspace{0.05in}

{\footnotesize \noindent Wei Ren\\
 School of Mathematical Sciences, Chongqing Normal University, Chongqing 401331, PR China\\
 E-mail: {\tt wren$\symbol{64}$cqnu.edu.cn}}

\end{document}